\documentclass[a4paper]{article}

\usepackage[utf8]{inputenc}
\usepackage[T1]{fontenc}
\usepackage{csquotes}
\usepackage[english]{babel}

\usepackage{bm}

\usepackage[backend=bibtex,style=alphabetic,giveninits=true,isbn=false,doi=false,url=false]{biblatex}
\addbibresource{References.bib}

\usepackage{lmodern}

\usepackage{amssymb,amsmath,amsthm,mathrsfs}

\usepackage{todonotes}
\usepackage{mathtools,bbm,bm}
\usepackage[normalem]{ulem}
\usepackage{cancel}
\usepackage{verbatim}
\usepackage{tikz}
\usetikzlibrary{matrix}
\pgfdeclarelayer{background} 
\pgfsetlayers{background,main} 
\usepackage[shortlabels]{enumitem}

\usepackage{hyperref}

\usepackage{a4}
\usepackage{multirow}
\usepackage[footnotesize,bf,centerlast]{caption}
\setlength{\captionmargin}{0.1\textwidth}

\usepackage{xcolor,colortbl}
\definecolor{Gray}{gray}{0.80}
\definecolor{LightGray}{gray}{0.90}
\definecolor{darkpastelgreen}{rgb}{0.01, 0.75, 0.24}

\usepackage{tikz}  
\usetikzlibrary{matrix} 
\pgfdeclarelayer{background} 
\pgfsetlayers{background,main} 

\setcounter{tocdepth}{2}

\newcommand{\cA}{\mathcal{A}}

\newcommand{\cC}{\mathcal{C}}
\newcommand{\cD}{\mathcal{D}}
\newcommand{\cE}{\mathcal{E}}

\newcommand{\cI}{\mathcal{I}}

\newcommand{\cL}{\mathcal{L}}

\newcommand{\cP}{\mathcal{P}}

\newcommand{\cT}{\mathcal{T}}
\newcommand{\cU}{\mathcal{U}}
\newcommand{\cV}{\mathcal{V}}
\newcommand{\cW}{\mathcal{W}}

\newcommand{\bN}{\mathbb{N}}

\newcommand{\bR}{\mathbb{R}}


\newcommand{\dd}{ \mathrm{d}}

\newcommand{\eps}{\varepsilon}

\newcommand{\vn}[1]{\left| \! \left| #1\right| \!\right|}

\newcommand{\ip}[2]{\langle #1,#2\rangle}

\newcommand{\otmap}[2]{\bm{t}^{{#2}}_{#1}}
\newcommand{\botmap}[2]{\tilde{\bm{t}}^{{#2}}_{#1}}

\numberwithin{equation}{section}

\newtheorem{theorem}{Theorem}[section]
\newtheorem{lemma}[theorem]{Lemma}
\newtheorem{proposition}[theorem]{Proposition}
\newtheorem{corollary}[theorem]{Corollary}

\theoremstyle{definition}
\newtheorem{definition}[theorem]{Definition}
\newtheorem{remark}[theorem]{Remark}

\newtheorem{assumption}[theorem]{Assumption}

\setlength{\marginparwidth}{3.2cm}

\newcommand{\R}{\mathbb{R}}
\newcommand{\De}{\mathrm{d}}
\newcommand{\otgrad}{\mathrm{grad}^{W_2}}


\newcommand{\adm}{\mathrm{Adm}}
\newcommand{\admerg}{\mathrm{Adm}_{\infty}}
\newcommand{\admt}{\mathrm{Adm}_{T}}

\newcommand{\X}{\mathsf X}
\newcommand{\sfd}{\mathsf d}

\newcommand{\sfS}{\mathsf S}

\newcommand{\loc}{\mathrm{loc}}

\newcommand{\EVI}{\mathrm{EVI}}

\title{Hamilton--Jacobi equations for Wasserstein controlled gradient flows: existence of viscosity solutions}


\author{Conforti G.\thanks{CMAP, Ecole Polytechnique, Route de Saclay, 91128, Palaiseau Cedex, France. \emph{E-mail address}: giovanni.conforti@polytechnique.edu. Research supported by the ANR project  ANR-20-CE40-0014.} , Kraaij  R. C.\thanks{Delft Institute of Applied Mathematics, Delft University of Technology, Mekelweg 4, 2628 CD Delft, The Netherlands. \emph{E-mail address}: r.c.kraaij@tudelft.nl} , Tamanini L.\thanks{Universit\`a Cattolica del Sacro Cuore, Dipartimento di Matematica e Fisica
"Niccolo Tartaglia", Via della Garzetta 48, I-25133 Brescia, Italy. \emph{E-mail address}: luca.tamanini@unicatt.it. While this work was written LT was associated to INdAM and the group GNAMPA. Research supported by the project PRIN 2017 (prot.2017TEXA3H) "Gradient flows, Optimal Transport and Metric Structures"} , Tonon D.\thanks{Dipartimento di Matematica "Tullio Levi-Civita", Universit\`a degli Studi di Padova, via Trieste 63, 35121 Padova, Italy. \emph{E-mail address}: daniela.tonon@unipd.it. Research supported by the project King Abdullah University of Science and Technology (KAUST)  ORA-CRG2021-4674
“Mean-Field Games: models, theory and computational aspects”; by the project SID BIRD 2022 "Stochastic mean field control and the Schrödinger problem"; and by the project PRIN2022 (prot.2022W58BJ5)  "PDEs and optimal control methods in mean field games, population dynamics and multi-agent models"}}
\date{\today}

\begin{document}

\maketitle

\abstract{This work is the third part of a program initiated in \cite{CoKrTo21,CoKrTo23} aiming at the development of an intrinsic geometric well-posedness theory for Hamilton-Jacobi equations related to controlled gradient flow problems in metric spaces. In this paper, we finish our analysis in the context of Wasserstein gradient flows with underlying energy functional satisfying McCann's condition. More prescisely, we establish that the value function for a linearly controlled gradient flow problem whose running cost is quadratic in the control variable and just continuous in the state variable yields a viscosity solution to the Hamilton-Jacobi equation in terms of two operators introduced in our former works, acting as rigorous upper and lower bounds for the formal Hamiltonian at hand. The definition of these operators is directly inspired by the Evolutional Variational Inequality formulation of gradient flows (EVI): one of the main innovations of this work is to introduce a controlled version of EVI, which turns out to be crucial in establishing regularity properties, energy and metric bounds along optimzing sequences in the controlled gradient flow problem that defines the candidate solution.
}

\tableofcontents

{
\section{Introduction}
Given an energy functional $\cE$ on the Wasserstein space $\cP_2(\mathbb{R}^d)$ satisfying McCann's curvature condition \cite{mccann1997convexity}, we consider in this article the problem of controlling the corresponding gradient flow over an infinite time horizon in such a way that an optimal compromise is found between the cost of controlling and the rewards that applying a control allows to collect. Our main contribution is to show that the value function of the control problem provides with viscosity solutions for the corresponding Hamilton-Jacobi equation according to the notion of solution we introduced in \cite{CoKrTo23}. Since uniqueness  of viscosity solutions has been established in our previous works \cite{CoKrTo21,CoKrTo23}, the existence result of this work constitutes the third and final step in completing the program of showing well posedness for this class of equations. In a nutshell, given an energy functional on the Wasserstein space $\cE:\cP_2(\mathbb{R}^d)\rightarrow\mathbb{R}^d$, whose (formal) Wasserstein gradient we denote by $\otgrad$, the equation of interest is 
\begin{equation}\label{eq:HJ_intro}
f + \langle \otgrad \cE , \otgrad f \rangle_{\mathrm{T}_{\mu}\mathcal{P}_2(\mathbb{R}^d)}  - \frac12\|\otgrad f \|^2_{\mathrm{T}_{\mu}\mathcal{P}_2(\mathbb{R}^d)}=h,
\end{equation}
where, for $\mu\in\cP_2(\mathbb{R}^d)$, $\langle \, \cdot ,\, \cdot \rangle_{\mathrm{T}_{\mu}\mathcal{P}_2(\mathbb{R}^d)}$ and $\| \cdot\|_{\mathrm{T}_{\mu}\mathcal{P}_2(\mathbb{R}^d)}$ denote the $L^2(\mu)$ inner product and norm respectively. An energy functional $\cE$ satsfying McCann's condition has the typical form
\begin{equation}\label{eq:McCann_intro}
\cE(\mu) =  \int U(\mu)\,\dd\mathcal{L}^d + \int V\,\dd\mu + \int W * \mu\,\dd\mu,
\end{equation}
where $\mathcal{L}^d$ is the Lebesgue measure on $\mathbb{R}^d$. In the above and in the rest of this article with a slight abuse of notation we shall make no distinction between an absolutely continuous probability measure $\mu$ and its density against the Lebsgue measure. The first term in the the decomposition \eqref{eq:McCann_intro} is known as internal energy: notable examples include the Boltzmann entropy $U(r)= r\log r$ and R\'eny entropies $U(r) = \frac{1}{\alpha-1}r^{\alpha}$. The second term is known as  potential energy, while the third term is referred to as the interaction energy, with $W$ often being called the interaction potential. A formal calculation suggests \cite[Sec 3.3.2]{AmGi13} that the Wasserstein gradient of $\cE$ at $\mu$ can be identified with the vector field $\otgrad\cE(\mu)$ given by
\begin{equation*}
\otgrad\cE(\mu)(x) = U''(\mu(x))\, \nabla \mu(x) + \nabla V(x) + \nabla W * \mu(x).
\end{equation*}
In light of this,  the Wasserstein gradient flow of $\cE$ started at $\mu$ is the curve $(\mu_t)_{t\geq0}$, solution of the evolution equation
\begin{equation*}
\quad \partial_t\mu_t -\frac{1}{2}\Delta P(\mu_t) + \nabla \cdot((-\nabla V -\nabla W * \mu_t )\mu_t) = 0, \quad \mu_0=\mu,
\end{equation*}
where $P(r) = rU'(r) - U(r)$. The interpretation of non linear PDEs in terms of Wasserstein gradient flows is a milestone result that has motivated much of the recent progresses in the theory of gradient flows and optimal transport, we refer to the seminal papers \cite{JoKiOt98,Ot01} and to the monograph \cite{AmGiSa08} for more information. Building on the above considerations and profiting from the formal Riemannian structure of $\cP_2(\mathbb{R}^d)$ induced by the Wasserstein distance, it is natural to formally interpret \eqref{eq:HJ_intro} as the Hamilton-Jacobi equation characterising the value function of the controlled gradient flow problem 
\begin{equation}\label{eq:control_pb_intro}
\begin{split}
\sup_{(\mu_{t},u_{t})_{t\geq0}}\,\,\,& \int_0^{+\infty} e^{-\lambda^{-1} t}\Big(-\frac12 \|u_t\|^2_{\mathrm{T}_{\mu_t}\cP_2(\mathbb{R}^d)} + \lambda^{-1}h(\mu_t)\Big) \dd t\\
& \text{s.t.} \quad \partial_t\mu_t -\frac{1}{2}\Delta P(\mu_t) + \nabla \cdot((-\nabla V -\nabla W * \mu_t + u_t)\mu_t) = 0,
\end{split}
\end{equation}
where $u_t$ is a vector field acting as the control variable, $\|u_t\|^2_{\mathrm{T}_{\mu_t}\cP_2(\mathbb{R}^d)}$ is the cost for steering the gradient flow and $h$ is a reward function. In this paper we make this interpretation rigorous by showing that a mathematically sound formulation of the control problem above provides with viscosity solutions for \eqref{eq:HJ_intro}. To do so, we rely on the notion of viscosity solution introduced in \cite{CoKrTo21},  whose formulation is in terms of rigorous upper and lower bounds for the formal Hamiltonian appearing in \eqref{eq:HJ_intro}. The definition of these operators is inspired by geodesic convexity properties of the functional $\cE$, captured by the celebrated Evolutional Variational Inequality (EVI) characterization of the corresponding gradient flow \cite{AmGiSa08}. One of the contributions of this work is to introduce and profit from a new version of EVI, that we name \emph{controlled EVI} (see Proposition \ref{prop:modif_EVI} below) which is well suited to the study of controlled gradient flows. Special instances of the abstract equation \eqref{eq:HJ_intro} have been studied over the past two decades by Feng and coauthors, see \cite{Fe06,FeKa09,FeMiZi21} for a sample of the most relevant contributions. To be more precise, the equations considered in these works correspond sometimes to an even more general version of \eqref{eq:HJ_intro} in which the underlying geometry is not necessarily the Wasserstein geometry. All uniqueness results we obtained \cite{CoKrTo21,CoKrTo23} apply to this broader class; we leave it to future work the development of  an abstract existence theory that applies at the same level of generality.
\paragraph{McKean Vlasov control and controlled gradient flows} When the internal energy in \eqref{eq:McCann_intro} is given by the Boltzmann entropy, i.e. when $U(r)=r\log r$, 
\eqref{eq:control_pb_intro} can be cast as a mean field, or McKean-Vlasov, control problem. That is to say, as a stochastic control problem in which the controlled state process is a non linear diffusion in the sense of McKean and the objective function may exhibit a non linear dependence in the law of the controlled state. Mean field control is an autonomous and thriving research field; we refer the interested reader to the monographs \cite{CaDe18a,CaDe18b} for more information and further references. To be slightly more precise, there is formal equivalence between \eqref{eq:control_pb_intro} and
\begin{equation}\label{eq:McKean_intro} 
\begin{split}
\sup_{(\alpha_t)_{t\geq0}} &\mathbb{E}\Big[ \int_0^{+\infty} e^{-\lambda t }\big(-\frac{1}{2}|\alpha_t|^2 +\lambda^{-1} h(X^{\alpha}_t)\big)\,\dd t \Big]\\
& \text{s.t.} \quad \dd X^{\alpha}_t= [-\nabla V(X^{\alpha}_t)-\nabla W *\mathrm{Law}(X^{\alpha}_t)+\alpha_t]\dd t + \sqrt{2}\dd B_t, \quad X^{\alpha}_{0}\sim \mu.
\end{split}
\end{equation}
In the above, $(B_t)_{t\geq0}$ is a Brownian motion and the state dynamics $(X^{\alpha}_t)_{t\geq0}$ as well as the control $(\alpha_t)_{t\geq0}$ are stochastic processes. Roughly speaking, the equivalence between the two problems can be explained observing that the stochastic processes considered in \eqref{eq:McKean_intro} provide with probabilistic representations for admissible curves in \eqref{eq:control_pb_intro}. Motivated by applications and the connections with mean field game theory \cite{CaDeLaLi19}, the study of Hamilton-Jacobi equations stemming from McKean-Vlasov control problems has been driving a large part of the recent research on infinite dimensional Hamilton-Jacobi equations (see for example \cite{cosso2023master,daudin2023comparison,soner2023viscosity}) and the results of our article contribute to this research line: we shall clarify below how our findings compare to the most recent results in the field.

\paragraph{Literature review} The two previous papers of this series provide with an extended overview of the different notions of solutions recently proposed to tackle Hamilton-Jacobi (henceforth HJ) equations on the space of probability measures and of the myriad of techniques developed to prove existence and uniqueness of viscosity solutions. We refer the interested reader to the introductory sections of \cite{CoKrTo21}\cite{CoKrTo23} for a compilation of references and detailed comparisons between the class of HJ equations studied here and other families of equations already considered in the literature. However, further progress has been made after our last work appeared and we shall now report on it. Some of the most recent results are concerned with equations for probability measures defined on compact spaces, a situation in which many of the main difficulties we have to deal with here do not appear. Three relevant contributions in this direction are \cite{SoYa22},\cite{Bert2023} and \cite{cecchin2022weak}. The first article studies McKean-Vlasov control problems on the torus, whereas \cite{Bert2023} introduces and  solves a stochastic version of the classical (deterministic) optimal transport problem in which the target measure evolves according to a stochastic process. The source of noise in the resulting HJ equation is then not in the dynamic of each agent, as it is the case in most works on the subject, but rather in the dynamics of the target measure. In \cite{cecchin2022weak} HJ equations for probability measures on the torus are solved and uniqueness is proved among suitable classes of semiconcave functions. In particular, this work does not appeal to viscosity solutions and relies on Fourier analysis to obtain finite-dimensional approximations of the target equation. The  most recent contributions about classes of HJ equations that are more closely related to the one we consider here are \cite{cosso2023master},\cite{soner2023viscosity} and \cite{daudin2023comparison} all look at equations arising in the context of mean field control. The authors of \cite{cosso2023master} show well posedness by analyzing finite-dimensional projections of the equation associated with the $N$-agents approximation of the McKean-Vlasov control problem at hand. In doing so, they profit from a lifting of the Wasserstein space to a suitably defined space of square integrable random variables, which has the convenient property of being an Hilbert space. Deviating from this approach, \cite{soner2023viscosity} relies on Fourier representations of Sobolev norms on the space of probability measures and a convenient representation of the derivatives of Wasserstein Lipschitz functions to establish the comparison principle. The article \cite{daudin2023comparison} adopts yet another approach to the design of a proper intrinsic notion of viscosity solution based on Wasserstein subdifferentials. Finally, we mention the very recent work \cite{daudin2023well}. In there, the authors succeed in establishing well posedness for a class of semilinear PDEs on the space of probability measures on the torus, allowing for Hamiltonians that are not convex in the momentum variable and for the presence a common noise. To do so, they exploit a weak metric previously introduced in \cite{bayraktar2023comparison} and first show a partial comparison principle for solutions that are Lipschitz with respect to the weak metric. Then, they show that under suitable growth and regularity assumptions on the coefficient, solutions with the desired Lipschitz properties actually do exist.
\paragraph{Our contribution} In \cite{CoKrTo21}, inspired by some heuristic geometric considerations about geodesically convex functionals, we defined rigorous upper and lower bounds for the formal Hamiltonian associated to \eqref{eq:HJ_intro} that do not require to manipulate any notion of derivative or subdifferential in the Wasserstein space. Then, we defined viscosity solutions in terms of these operators and showed a comparison principle. The subsequent paper \cite{CoKrTo23} lays the foundations for the existence theory we develop in this work by showing that the comparison principle of \cite{CoKrTo21} implies a comparison principle for a new notion of viscosity solutions that makes use of a simpler and more tractable set of test functions. These are simple cylindrical test functions of the square Wasserstein distance, whereas the test functions in \cite{CoKrTo23} contained the Tataru's distance \cite{Ta94} that is not smooth. When comparing the assumptions required for our theory to apply and those in the above mentioned works, one can observe that all these works deal exclusively with McKean-Vlasov control problem. Therefore, they are unable to treat situations in which the internal energy is different form the Boltzmann entropy, leaving out R\'eny entropies. In this setting, our results appear to be genuinely new, at least to the best of our knowledge and understanding. If we remain in the realm of McKean-Vlasov control problems, the above mentioned works are generally more flexible concerning the structure of the Hamiltonian. For example, neither \cite{cosso2023master} nor \cite{daudin2023comparison} require the Lagrangian to be separable or the cost to be quadratic in the control variable. Furthermore, they encompass situations in which the diffusion coefficient in the controlled dynamics is not a constant matrix. In turn, stronger regularity assumptions are imposed on the coefficients in all the above mentioned works . More precisely, some  form of Wasserstein Lipschitzianity on either the Hamiltonian or the cost functional is assumed there, whereas all we require here is that the running cost is bounded and continuous in some $p-$Wasserstein topology for $p<2$. To conclude, we wish to point out that even though the solution theory we build in this work applies to controlled Wasserstein gradient flows only, the derivative-free approach to uniqueness of \cite{CoKrTo21,CoKrTo23} applies to a more general class of equations, set on metric spaces that may well differ from the Wasserstein space. Some candidate equations for which our uniqueness results could in principle be applied have been described in the introduction of \cite{CoKrTo21}. The question of existence, left open in this work, calls for further investigations.
}

\paragraph{Organization} The manuscript is organized as follows. In Section \ref{section:settingAssumptionsMainresults} we introduce the notion of viscosity solution we proposed in our earlier works, state our main hypothesis and define our candidate solution through an abstract controlled gradient flow problem. In Section \ref{section:preliminaryResults} we focus on obtaining estimates for controlled gradient flows, in particular by establishing a controlled version of EVI. Sections \ref{sec:subsolution} and \ref{sec:supersolution} are devoted to establishing the subsolution and supersolution property. Some technical results are gathered in Appendix \ref{section:appendix}.

\section{Setting, assumptions and main results} \label{section:settingAssumptionsMainresults}

\textbf{Frequently used notation}. We write $C(\cP_2(\R^d)), LSC(\cP_2(\R^d))$, and $USC(\cP_2(\R^d))$ for the spaces of continuous, lower semi-continuous and upper semi-continuous functions from $\cP_2(\R^d)$ into $\bR$. We denote by $C_u(\cP_2(\R^d)), C_l(\cP_2(\R^d)), LSC_l(\cP_2(\R^d))$ and $USC_u(\cP_2(\R^d))$ the subsets of functions that admit a lower or upper bound. Finally $C_b(\cP_2(\R^d)) = C_u(\cP_2(\R^d)) \cap C_l(\cP_2(\R^d))$. The space of smooth real functions with compact support is denoted by $\cC^\infty_c([0,T]\times \R^d)$, while the space of  absolutely continuous functions whose derivatives belong to $L^2([0,T])$ is denoted with $AC^2([0,T], \cP_2(\R^d))$.

We proceed in Section \ref{subsection:SettingAssumptions} with our setting and main assumptions. In Section \ref{subsection:CandidateSolution} we introduce our candidate solution. In Section \ref{subsection:UpperLowerHamiltonian} we introduce our rigorous upper and lower Hamiltonian. Finally, in Section \ref{subsection:MainResults} we state our main results.

\subsection{Setting and assumptions} \label{subsection:SettingAssumptions}

In this section we recall all the relevant tools used in the article.

We start with the definition of Wasserstein distance and Wasserstein space. In the space of probability measures with finite second moment $\cP_2(\R^d)$, we introduce the  Kantorovich-Rubinstein-Wasserstein distance  of order two $W_2$, defined by
\begin{equation*}
W^2_2(\mu,\nu) = \inf_{\pi\in\Pi(\mu,\nu)} \int |x-y|^2 \pi(\De x \De y),
\end{equation*}
where $\Pi(\mu,\nu)$ is the set of $2$-plans with given marginals $\mu,\nu$. The space   $(\cP_2(\R^d), W_2)$ is called the Wasserstein space.

Throughout the whole manuscript we are interested in entropy functionals  on the Wasserstein space $\cE: \mathcal{P}_2(\R^d)\to (-\infty, +\infty]$  of the following type:

\begin{equation}\label{eq:def-functional2}
\cE(\mu) := \int U(\rho)\,\dd\cL^d + U'(\infty)\mu^\perp(\R^d) + \int V(x)\,\dd\mu(x) + \int W(x-y)\,\dd\mu(x)\dd\mu(y),
\end{equation}
where $\mu = \rho\cL^d + \mu^\perp$, $\mu^\perp \perp \cL^d$ and $U'(\infty) := \lim_{r \to \infty} U'(r)$. Note that if $U$ has superlinear growth at infinity, as we will require in the assumptions below, see Assumptions \ref{ass: OT energy functional}, then the above definition of $\cE$ reduces to
\[
\cE(\mu) = \left\{ \begin{array}{ll}
\displaystyle{\int U(\mu)\,\dd\cL^d + \int V\,\dd\mu + \int W * \mu\,\dd\mu} & \qquad \textrm{if } \mu \ll\cL^d, \\
+\infty & \qquad \textrm{otherwise}.
\end{array}
\right.
\]
Note that, as stated in the introduction, with a slight abuse of notation we shall make no distinction between an absolutely continuous probability measure $\mu$ and its density against the Lebsgue measure.

Moreover, the former is the lower semicontinuous relaxation (w.r.t.\ the narrow topology) of the latter, and it can be checked that under a superlinearity assumption $\cE$ is actually lower semicontinuous w.r.t.\ the weak $L^1$ topology.

For sake of brevity, let us introduce the following notation
\[
\begin{split}
\mathcal U(\mu) & := \int U(\rho)\,\dd\cL^d + U'(\infty)\mu^\perp(\R^d), \\
\mathcal V(\mu) & := \int V(x)\,\dd\mu(x), \\
\mathcal W(\mu) & := \int W(x-y)\,\dd\mu(x)\dd\mu(y).
\end{split}
\]
On the functionals internal energy $\mathcal U$, potential energy $\mathcal V$, and interaction energy $\mathcal W$ we formulate the following assumptions:

\begin{assumption}[McCann's condition]\label{ass: OT energy functional}
\begin{enumerate}[(a)] 
\item $U:[0,+\infty)\rightarrow \R$ is convex, differentiable with superlinear growth. It satisfies the doubling condition
\[
\exists C>0: \quad U(z+w) \leq C(1+U(z)+U(w)), \quad \forall z,w\geq 0. 
\] 
Moreover we assume that 
\[ 
s\mapsto s^dU(s^{-d}) \quad \text{is convex and non-increasing on $(0,+\infty)$}
\]
and 
\[ 
U(0)=0, \quad \lim_{s\rightarrow 0} U(s)/s^{-\alpha}>-\infty, \quad \text{for some $\alpha>\frac{d}{d+2}$}.
\]
\item $V:\R^d\rightarrow (-\infty,+\infty]$ is lower semi-continuous, $\kappa_V$-convex for some $\kappa_V\in \R$, with proper domain that has nonempty interior.
\item $W:\R^d\rightarrow[0,\infty)$ is an even continuously differentiable $\kappa_W$-convex function for some $\kappa_W \in \R$
and satisfies the doubling condition 
\[ 
\exists C>0: \quad W(x+y) \leq C(1+W(x)+W(y)), \quad \forall x,y\in \R^d.
\]
\end{enumerate}
\end{assumption}


\begin{remark}\label{rmk:lsc}
The functional $\cE$ is lower semicontinuous w.r.t.\ $W_p$ convergence, for any $p \in [1,2]$. Indeed, the standing assumptions on $U$ grant that $\cU$ is actually narrowly/weakly lower semicontinuous, as discussed in \cite[Section 10.4.3]{AmGiSa08}. Secondly, the $W_p$-lower semicontinuity of $\cV$ follows from \cite[Example 9.3.1]{AmGiSa08}, since the $\kappa_V$-convexity of $V$ implies that the negative part of $V$ has 2-growth (i.e.\ $V(x) \geq -A-B|x|^2$ for all $x \in \R^d$ and for some $A,B \in \R$), so that \emph{a fortiori} $V$ has $p$-growth for any $p \in [1,2]$. Finally, $\cW$ is $W_p$-lower semicontinuous too, since the $\kappa_W$-convexity of $W$ implies a $p$-growth condition on $W^-$, for any $p \in [1,2]$, and this implies the desired lower semicontinuity property, as discussed in \cite[Example 9.3.4]{AmGiSa08}.
\end{remark}

We also introduce the information functional $\cI: \cP_2(\R^d) \to [0, +\infty]$ as
\[
\cI(\pi) : = \left\{
\begin{array}{cc}
|\partial \cE|^2(\pi)  & \textrm{if } \pi\in \cD(\cE), \\
+\infty  & \text{otherwise},
\end{array} \right.,\
\]
where $|\partial\cE|$ denotes the local slope of $\cE$, defined as
\[
|\partial\cE|(\pi) := \limsup_{\mu \to \pi}\frac{(\cE(\pi)-\cE(\mu))^+}{W_2(\pi,\mu)}.
\]
It is well known that, under Assumption \ref{ass: OT energy functional}, $\cE$ is geodesically $\kappa$-convex, for $\kappa:=\kappa_V + \kappa_W$ (see e.g.\ \cite[Section 9.3]{AmGiSa08}). Let us also remark that all measures in $\cD(\cI)$ are regular in the sense of \cite[Definition 6.2.2]{AmGiSa08}, since in $\cP_2(\R^d)$, regularity boils down to absolute continuity w.r.t.\ the Lebesgue measure $\cL^d$. For these reasons (see \cite[Section 10.1.1]{AmGiSa08}), a vector $v \in L^2(\mu)$ belongs to the subdifferential of $\cE$ at $\mu \in \cD(\cI)$ if and only if
\begin{equation}\label{eq:subdifferential}
\cE(\nu)-\cE(\mu) \geq \int \ip{v(x)}{\otmap{\mu}{\nu}(x)-x}\dd\mu(x) + \frac{\kappa}{2}W_2^2(\mu,\nu), \qquad \forall \nu \in \cD(\cE),
\end{equation}
where $\otmap{\mu}{\nu}$ denotes the optimal transport map that pushes $\mu$ onto $\nu$ (whose existence and uniqueness is ensured by \cite[Theorem 6.2.4]{AmGiSa08} when $\mu \ll \cL^d$; this is satisfied as soon as $\mu\in\cD(\cE)$).
		 


\subsection{The candidate solution} \label{subsection:CandidateSolution}

Let us first of all introduce the notion of \emph{admissible curve}.

\begin{definition}\label{def:admissible}
We say that a pair $(\mu_{t},u_{t})_{t\geq0} \in \admt$, also written $(\mu_{\cdot},u_{\cdot}) \in \admt$ if and only if:
\begin{itemize}
\item $\mu. \in AC^2([0,T],\cP_2(\R^d))$ for every $T > 0$, namely
\[
\exists \lim_{h \to 0}\frac{W_2(\mu_{t+h},\mu_t)}{|h|} =: |\dot{\mu}_t|, \quad \textrm{for a.e. } t \in [0,T] \qquad \textrm{and} \qquad |\dot{\mu}_t| \in L^2(0,T).
\]
\item We have
\begin{equation}\label{eq:integrability-control}
\int_{0}^{T}\|u_t\|^2_{L^2(\mu_t)} \dd t<+\infty.
\end{equation}
\item $u_{\cdot}\in H^{-1}(\mu.)$, where 
\[
H^{-1}(\mu.)= \overline{\{(t,x)\mapsto\nabla\psi_t(x):\psi\in \cC_{c}^{\infty}([0,T] \times \R^d) \}}^{L^2(\dd t \otimes \dd\mu_t)},
\]
\item $(\mu_{\cdot},u_{\cdot})$ solves 
\begin{equation}\label{eq:controlled_GF}
\partial_t\mu_t -\frac{1}{2}\Delta P(\mu_t) + \nabla \cdot((-\nabla V -\nabla W * \mu_t + u_t)\mu_t) = 0, 
\end{equation}
for $(t,x)\in[0,T]\times\R^d$, in the sense of distributions, where $P(r) := rU'(r) - U(r)$.
\end{itemize}
Note that $\admt$ is nonempty because the constant pair $(\mu, 0)$ solves the equation \eqref{eq:controlled_GF} for any $\mu\in\cP_2(\R^d)$.

We say that a pair $(\mu_{t},u_{t})_{t\geq0} \in \admerg$ if its restriction to $[0,T]$ belongs to $\admt$ for all $T$ and 
\[
\int_0^\infty e^{-\lambda^{-1}t} \|u_t\|^2_{L^2(\mu_t)} dt < +\infty.
\]
Note that according to its definition, the set $\admerg$ depends on $\lambda$.
\end{definition}

\begin{remark}\label{rmk:continuity_equation}
Recall that, by \cite[Theorem 8.3.1]{AmGiSa08}, $\mu. \in AC^2([0,T],\cP_2(\R^d))$ if and only if there exists a Borel vector field $(v_t)_{t \in [0,T]}$ such that $\|v_t\|_{L^2(\mu_t)} \in L^1(0,T)$ and $(\mu_t,v_t)$ is a distributional solution of the continuity equation $\partial_t\mu_t + \nabla\cdot(v_t\mu_t)=0$. In this case, $\|v_t\|_{L^2(\mu_t)} = |\dot{\mu}_t|^2$ for a.e.\ $t \in [0,T]$.
\end{remark}

We define the candidate solution $\Phi : \cP_2(\R^d) \to \mathbb{R}$ as the value function of the control problem
\begin{equation}\label{def:value_fun}
\begin{split}
& \Phi(\mu) := \sup_{\substack{(\mu_{\cdot},u_{\cdot}) \in \admerg \\ \mu_0=\mu}} \cA(\mu_{\cdot},u_{\cdot}), \\
& \textrm{where for } (\mu_{\cdot},u_{\cdot}) \in \admerg \textrm{ with } \mu_0=\mu,  \\
& \cA(\mu_{\cdot},u_{\cdot}) := \int_{0}^{\infty} e^{-\lambda^{-1}t}\Big(-\frac12\|u_t\|^2_{L^2(\mu_t)} + {\lambda^{-1}} h(\mu_t)\Big) \dd t.
\end{split}
\end{equation}
For future convenience, let us also define for any $0 \leq t \leq T \leq +\infty$ the restricted action
\begin{equation}\label{eq:action_T}
\cA_{t,T}(\mu_{\cdot},u_{\cdot}) := \int_t^T e^{-\lambda^{-1}s}\Big(-\frac12 \|u_s\|^2_{L^2(\mu_s)} + {\lambda^{-1}} h(\mu_s)\Big) \dd s.
\end{equation}
When $t=0$, for sake of brevity we will write $\cA_T := \cA_{0,T}$.





\subsection{The rigorous upper and lower Hamiltonian} \label{subsection:UpperLowerHamiltonian}

We define now the set of Hamiltonians that rigorously upper and lower bound the formal Hamiltonian of \eqref{eq:HJ_intro}
\begin{equation*}\label{eq:HJ_formal}
H_{\text{formal}}f(\mu) =  \langle -\otgrad \cE , \otgrad f \rangle_{\mathrm{T}_{\mu}\mathcal{P}_2(\mathbb{R}^d)}  + \frac12\|\otgrad f \|^2_{\mathrm{T}_{\mu}\mathcal{P}_2(\mathbb{R}^d)}
\end{equation*}
in terms of smooth cylindrical test functions. The definition is motivated in \cite{CoKrTo23}, where for viscosity solutions of the Hamilton-Jacobi equation in terms of these Hamiltonians a comparison principle is shown. 

Let $\cT$ be the collection of functions $\varphi$ defined as 
\begin{equation} \label{eqn:defT}
\cT := \left\{ \varphi \in \mathcal{C}^{\infty}([0,\infty)^{k};\mathbb{R}) \,:\, k\in\mathbb{N},\,\forall \, i=1,\dots, k  \,\, \partial_i \varphi >  0  \, \right\}, 
\end{equation}
where $\cC^\infty([0,\infty)^{k};\mathbb{R})$ is the set of smooth functions mapping $[0,\infty)^{k}$ into $\bR$. 
For $\mu_1,\dots,\mu_k \in\mathcal{P}_2(\R^d)$, we write  $\bm{\mu} = (\mu_1,\dots,\mu_k)$ and $\bm{\mu} \in {\cD(\cI) }$  if all elements in the vector are in $\cD(\cI)$. Moreover $W_2 (\cdot,\bm\mu)= (W_2(\cdot,\mu_1),\dots, W_2(\cdot, \mu_k))$, $\cE(\bm\mu)=(\cE(\mu_1),...,\cE(\mu_k))$, and $\bm{1}=(1,...,1)$. We shall also denote by $\cdot$ the Euclidean inner product.

In the following, we set  $\kappa := \kappa_V+\kappa_W$ the sum of the convexity constants defined in Assumption \ref{ass: OT energy functional}.

We next introduce the formal upper and lower bound 
in terms of smooth cylindrical test functions. The upper bounds are derived based on 

\begin{definition} \label{definition:H}
For $a > 0$, $\varphi \in \cT$, $\rho \in \cD(\cI)$, and $\bm\mu = (\mu_1,\ldots,\mu_k) \in (\cP_2(\R^d))^{k}$ such that $\bm{\mu} \in \cD(\cI)$, we define $f^\dagger = f^\dagger_{a,\varphi,\rho,\bm\mu} \in C_l(\cP_2(\R^d))$ and $g^\dagger = g^\dagger_{a,\varphi,\rho,\bm\mu} \in USC(\cP_2(\R^d))$ for all $\pi \in \cP_2(\R^d)$ as
\begin{align}
f^\dagger(\pi) & := \frac{a}{2} W_2^2(\pi,\rho) + \varphi\left( \frac{1}{2}W_2^2(\pi,\bm\mu)\right), \label{eq:reg_f_1dag} \\
g^\dagger(\pi) & := a \left[\cE(\rho) - \cE(\pi) - \frac{\kappa}{2} W_2^2(\pi,\rho) \right] + \frac{a^2}{2} W_2^2(\pi,\rho) \label{eq:reg_g_1dag}\\
& \qquad +\sum_{i=1}^k \partial_i \varphi\left( \frac{1}{2}W_2^2(\pi,\bm\mu)\right) \left[\cE(\mu_i) - \cE(\pi) - \frac{\kappa}{2} W_2^2(\pi,\mu_i) \right]  \notag \\
& \qquad + \frac{1}{2} \left( \sum_{i = 1}^k \partial_i \varphi\left( \frac{1}{2}W_2^2(\pi,\bm\mu)\right) W_2(\pi,\mu_i) \right)^2 \notag \\
& \qquad + a W_2(\pi,\rho) \left(\sum_{i = 1}^k \partial_i \varphi\left( \frac{1}{2}W_2^2(\pi,\bm\mu)\right) W_2(\pi,\mu_i)\right) \notag
\end{align}
and set $H_\dagger \subseteq C_l(\cP_2(\R^d)) \times USC(\cP_2(\R^d))$ by
\[
H_{\dagger} := \left\{ (f^{\dagger},g^{\dagger}) \,:\, \forall \varphi \in \cT,\, a > 0,\, \rho \in \cD(\cI),\, \bm\mu\in {\cD(\cI) } \right\}.
\]
In the same way, for $a > 0$, $\varphi \in \cT$, $\gamma \in \cD(\cI)$ and $\bm\pi=(\pi_1,\ldots,\pi_k) \in (\cP_2(\R^d))^k$ such that $\bm\pi \in {\cD(\cI)}$ we define $f^\ddagger = f^\ddagger_{a,\varphi,\gamma,\bm\pi} \in C_u(\cP_2(\R^d))$ and $g^\ddagger = g^\ddagger_{a,\varphi,\gamma,\bm\pi} \in LSC(\cP_2(\R^d))$ for all $\mu \in \cP_2(\R^d)$ as
\begin{align}
f^\ddagger(\mu) & := - \frac{a}{2} W_2^2(\mu,\gamma) -\varphi\left( \frac{1}{2}W_2^2(\mu,\bm\pi)\right), \label{eq:reg_f_2ddag} \\
g^{\ddagger}(\mu) & := a \left[\cE(\mu) -\cE(\gamma) + \frac{\kappa}{2}W_2^2(\mu,\gamma) \right] + \frac{a^2}{2} W_2^2(\gamma,\mu) \label{eq:reg_g_2ddag} \\
& \qquad + \sum_{i=1}^k \partial_i \varphi\left(\frac{1}{2}W_2^2(\mu,\bm\pi)\right)\left[\cE(\mu) -\cE(\pi_i) + \frac{\kappa}{2}W_2^2(\mu,\pi_i) \right] \notag \\
& \qquad - \frac{1}{2}  \left(\sum_{i=1}^k  \partial_i \varphi\left( \frac{1}{2}W_2^2(\mu,\bm\pi)\right) W_2(\mu,\pi_i)\right)^2 \notag \\
& \qquad - a W_2(\mu,\gamma)\left(\sum_{i=1}^k  \partial_i \varphi\left( \frac{1}{2}W_2^2(\mu,\bm\pi)\right) W_2(\mu,\pi_i)\right) \notag
\end{align}
and set $H_\ddagger \subseteq C_u(\cP_2(\R^d)) \times LSC(\cP_2(\R^d))$ by
\[
H_\ddagger := \left\{ (f^\ddagger,g^\ddagger) \,:\, \forall \varphi \in \cT,\, a >0,\, \gamma \in \cD(\cI),\, \bm\pi \in {\cD(\cI) } \right\}.
\]
\end{definition}

As the definitions of $g^\dagger,g^\ddagger$ are particularly involved, to avoid cumbersome computations we will often use the following alternative (more compact) notations:
\[
\begin{split}
g^\dagger(\pi) & = a \Big(\cE(\rho) - \cE(\pi) - \frac{\kappa}{2} W_2^2(\pi,\rho) \Big) \\
& \qquad + \nabla\varphi\bigg(\frac12 W_2^2(\pi,\bm\mu) \bigg) \cdot \Big(\cE(\bm\mu) - \cE(\pi)\bm{1} - \frac{\kappa}{2} W_2^2(\pi,\bm\mu)\Big) \\
& \qquad + \frac{1}{2} \left( a W_2(\pi,\rho) + \nabla\varphi(\frac12W_2^2(\pi,\bm\mu))\cdot W_2(\pi,\bm\mu)\right)^2\,, \\
g^\ddagger(\mu) & = a \Big(\cE(\mu) - \cE(\gamma) + \frac{\kappa}{2} W_2^2(\mu,\gamma) \Big) \\
& \qquad + \nabla\varphi\bigg(\frac12 W_2^2(\mu,\bm\pi) \bigg) \cdot \Big(\cE(\mu)\bm{1} - \cE(\bm\pi) + \frac{\kappa}{2} W_2^2(\mu,\bm\pi)\Big) \\
& \qquad + a^2 W_2^2(\gamma,\mu) - \frac{1}{2} \left( a W_2(\gamma,\mu) + \nabla\varphi\left(\frac12W_2^2(\mu,\bm\pi)\right)\cdot W_2(\mu,\bm\pi)\right)^2 \,.
\end{split}
\]

\subsection{Main results} \label{subsection:MainResults}

We are now ready to make precise the notion of solution we are looking for. The notion of viscosity solution used in this article comes from the one given in \cite{CoKrTo23}, where it is used to prove uniqueness.  We will state this definition for  general Hamiltonians  {$A_\dagger, \widehat{A}_\dagger \subseteq LSC_l(\mathcal{P}_2(\R^d)) \times USC(\mathcal{P}_2(\R^d))$ and $A_\ddagger, \widehat{A}_\ddagger \subseteq USC_u(\mathcal{P}_2(\R^d)) \times LSC(\mathcal{P}_2(\R^d))$. }


\begin{definition} \label{definition:viscosity_solutions_HJ_sequences}
Fix $\lambda > 0$ and $h^\dagger,h^\ddagger \in C_b(\mathcal{P}_2(\R^d))$. Consider the equations
\begin{align} 
f - \lambda  A_\dagger f & = h^\dagger, \label{eqn:differential_equation_tildeH1} \\
f - \lambda A_\ddagger f & = h^\ddagger. \label{eqn:differential_equation_tildeH2}
\end{align}
We say that $u$ is a \textit{(viscosity) subsolution} of equation \eqref{eqn:differential_equation_tildeH1} if $u$ is bounded, upper semi-continuous and if for all $(f,g) \in A_\dagger$ there exists a sequence $(\pi_n)_{n\in \mathbb N} \subseteq \cP_2(\R^d)$ such that
\begin{gather}
\lim_{n \uparrow \infty} u(\pi_n) - f(\pi_n)  = \sup_\pi u(\pi) - f(\pi), \label{eqn:viscsub1} \\
\limsup_{n \uparrow \infty} u(\pi_n) - \lambda g(\pi_n) - h^\dagger(\pi_n) \leq 0. \label{eqn:viscsub2}
\end{gather}
We say that $v$ is a \textit{(viscosity) supersolution} of equation \eqref{eqn:differential_equation_tildeH2} if $v$ is bounded, lower semi-continuous and if for all $(f,g) \in A_\ddagger$ there exists a sequence $(\pi_n)_{n\in \mathbb N} \subseteq \cP_2(\R^d)$ such that
\begin{gather}
\lim_{n \uparrow \infty} v(\pi_n) - f(\pi_n)  = \inf_\pi v(\pi) - f(\pi), \label{eqn:viscsup1} \\
\liminf_{n \uparrow \infty} v(\pi_n) - \lambda g(\pi_n) - h^\ddagger(\pi_n) \geq 0. \label{eqn:viscsup2} 
\end{gather}
If $h^\dagger = h^\ddagger$, we say that $u$ is a \textit{(viscosity) solution} of equations \eqref{eqn:differential_equation_tildeH1} and \eqref{eqn:differential_equation_tildeH2} if it is both a subsolution of \eqref{eqn:differential_equation_tildeH1} and a supersolution of \eqref{eqn:differential_equation_tildeH2}.
		
We say that \eqref{eqn:differential_equation_tildeH1} and \eqref{eqn:differential_equation_tildeH2} satisfy the \textit{comparison principle} if for every subsolution $u$ to \eqref{eqn:differential_equation_tildeH1} and supersolution $v$ to \eqref{eqn:differential_equation_tildeH2}, we have $\sup_{\mathcal{P}_2(\R^d)} u-v \leq \sup_{\mathcal{P}_2(\R^d)} h^\dagger - h^\ddagger$.
\end{definition}







\begin{theorem}
Under Assumption \ref{ass: OT energy functional}, let $h \in C_b(\cP_p(\R^d))$ for some $p<2$. Let  $\Phi^*$ and $\Phi_*$ be respectively the upper and the lower semicontinuous relaxation, w.r.t.\ the $W_p$-topology, of the value function $\Phi$ defined in \eqref{def:value_fun}. Then $\Phi^*$ is a viscosity subsolution of the Hamilton-Jacobi equation \eqref{eqn:differential_equation_tildeH1} and $\Phi_*$ a viscosity supersolution of \eqref{eqn:differential_equation_tildeH2} where $h^\dag = h^\ddag = h$, $A_\dag = H_\dag$, and $A_\ddag = H_\ddag$ as defined in Definition \ref{definition:H}.
\end{theorem}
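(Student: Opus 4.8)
The plan is to verify the two abstract viscosity conditions from Definition~\ref{definition:viscosity_solutions_HJ_sequences} directly, exploiting the dynamic programming structure of $\Phi$ together with the controlled EVI (Proposition~\ref{prop:modif_EVI}). For the subsolution part, one fixes $(f^\dagger,g^\dagger)\in H_\dagger$ with parameters $(a,\varphi,\rho,\bm\mu)$ and must produce a maximizing sequence $(\pi_n)$ for $\Phi^*-f^\dagger$ along which the inequality \eqref{eqn:viscsub2} asymptotically holds. First I would observe that, since $f^\dagger\in C_l$ is coercive (the $\tfrac{a}{2}W_2^2(\cdot,\rho)$ term dominates and $\varphi$ has positive partials), the difference $\Phi^*-f^\dagger$ is upper semi-continuous and bounded above, so approximate maximizers exist; moreover coercivity forces such a maximizing sequence to stay in a $W_2$-bounded set, hence (by Prokhorov plus uniform integrability coming from the energy/information bounds established in Section~\ref{section:preliminaryResults}) to be $W_p$-precompact for the relevant $p<2$. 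One then replaces $\pi_n$ by points at which $\Phi$ (not just $\Phi^*$) is nearly optimal, using the definition of the relaxation, and for each such $\pi_n$ picks a near-optimal admissible control $(\mu^n_\cdot,u^n_\cdot)\in\admerg$ with $\mu^n_0=\pi_n$, i.e. $\cA(\mu^n_\cdot,u^n_\cdot)\geq\Phi(\pi_n)-\eps_n$.

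The heart of the argument is the infinitesimal version of the dynamic programming principle. Writing $\Phi(\pi_n)\le \cA_{0,t}(\mu^n_\cdot,u^n_\cdot)+e^{-\lambda^{-1}t}\Phi(\mu^n_t)+\eps_n$ and $\Phi(\mu^n_t)\le\Phi^*(\mu^n_t)$, subtracting $f^\dagger(\pi_n)$ from both sides, using the near-maximality $\Phi^*(\pi_n)-f^\dagger(\pi_n)\ge \Phi^*(\mu^n_t)-f^\dagger(\mu^n_t)-\delta_n$, dividing by $t$ and letting $t\downarrow 0$, the problem reduces to bounding from above
\[
\frac{1}{t}\Big(f^\dagger(\mu^n_t)-f^\dagger(\pi_n)\Big) - \frac{1}{t}\cA_{0,t}(\mu^n_\cdot,u^n_\cdot) + \text{(lower order)}.
\]
The first bracket is controlled by differentiating $W_2^2(\mu^n_t,\rho)$ and $W_2^2(\mu^n_t,\mu_i)$ along the curve: here one invokes the controlled EVI of Proposition~\ref{prop:modif_EVI}, which bounds $\tfrac{d}{dt}\tfrac12 W_2^2(\mu^n_t,\sigma)$ by $\cE(\sigma)-\cE(\mu^n_t)-\tfrac{\kappa}{2}W_2^2(\mu^n_t,\sigma)$ plus a term pairing the control $u^n_t$ with the optimal transport direction, controlled in turn by $\|u^n_t\|_{L^2(\mu^n_t)}W_2(\mu^n_t,\sigma)$ via Cauchy--Schwarz. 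The running-cost term contributes $-\tfrac12\|u^n_t\|^2_{L^2(\mu^n_t)}+\lambda^{-1}h(\mu^n_t)$. Collecting the $\|u^n_t\|$-dependent contributions, one gets an expression of the form $\langle b_n, u^n_t\rangle - \tfrac12\|u^n_t\|^2$ with $\|b_n\|\le aW_2(\mu^n_t,\rho)+\nabla\varphi(\tfrac12 W_2^2(\mu^n_t,\bm\mu))\cdot W_2(\mu^n_t,\bm\mu)$; maximizing (``completing the square'') over $u^n_t$ produces exactly the quadratic terms $\tfrac{a^2}{2}W_2^2+aW_2(\cdot,\rho)(\cdots)+\tfrac12(\cdots)^2$ appearing in \eqref{eq:reg_g_1dag}, while the EVI energy terms produce the $\cE$-differences, and $\lambda^{-1}h(\mu^n_t)\to\lambda^{-1}h(\pi_n)$ along the precompact sequence by continuity of $h$ in $W_p$. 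This is precisely $g^\dagger(\pi_n)$ up to errors that vanish as $t\downarrow 0$ and along $n$. A diagonal extraction over $t=t_n\downarrow 0$ then yields the sequence $(\pi_n)$ with \eqref{eqn:viscsub1}--\eqref{eqn:viscsub2}. The supersolution statement is entirely symmetric, using the reverse EVI inequality and the opposite sign conventions in \eqref{eq:reg_f_2ddag}--\eqref{eq:reg_g_2ddag}, together with a near-optimal control realizing $\Phi(\pi_n)$ from below and the lower semi-continuity of $\Phi_*$.

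The main obstacle I anticipate is the interchange of limits and the regularity needed to differentiate $t\mapsto W_2^2(\mu^n_t,\sigma)$: one must know that $\mu^n_\cdot$ has enough regularity (finite action forces $|\dot\mu^n_t|\in L^2$ and, along near-optimal curves, $\cE(\mu^n_t)$ and $\cI(\mu^n_t)$ stay finite and bounded on compact time intervals — exactly the ``energy and metric bounds along optimizing sequences'' advertised in the abstract), so that the controlled EVI applies and the pairing $\int_0^t\langle u^n_s,\otmap{\mu^n_s}{\sigma}-\mathrm{id}\rangle\,\dd\mu^n_s\,\dd s$ is well-defined and $o(t)$-close to $t\langle u^n_0,\cdot\rangle$-type quantities. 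Controlling the $W_p$-continuity of $h$ against only $W_2$-compactness (bridging the gap $p<2$) requires the uniform integrability estimate, which is where Assumption~\ref{ass: OT energy functional} on superlinear growth of $U$ enters. Finally, passing from $\Phi$ to its relaxations $\Phi^*,\Phi_*$ while keeping the dynamic programming inequalities requires care, since DPP is a statement about $\Phi$; the resolution is to prove the viscosity inequalities for approximate optima of $\Phi$ at points converging in $W_p$ to the maximizer of $\Phi^*-f^\dagger$, using the coercivity of $f^\dagger$ to control the locations.
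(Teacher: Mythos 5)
Your outline for the subsolution part captures the correct overall shape (DPP, controlled EVI, Cauchy--Schwarz/Young, passage to the $W_p$-limit of a maximizing sequence), but two essential mechanisms in the paper's proof are either missing or inverted.

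First, the energy regularization. You observe that $W_2$-boundedness of an $\{\Phi-f^\dagger\}$-maximizing sequence follows from coercivity of $f^\dagger$, and then appeal vaguely to "energy/information bounds along optimizing sequences." But $W_2$-boundedness alone gives no bound on $\cE$, and the conclusion $\Phi^*(\pi^0)-\lambda g^\dagger(\pi^0)-h(\pi^0)\le 0$ becomes vacuous or false if $\cE(\pi^0)=+\infty$: since $g^\dagger_\cE$ contains $-(a+\sum_i\partial_i\varphi)\cE(\pi)$ with strictly positive coefficients, $g^\dagger(\pi^0)=-\infty$ would force the left side to $+\infty$. The paper's Lemma~\ref{lemma:raw_estimate_Phifdagger} and Proposition~\ref{proposition:existence_regularity_minimizer_subsol} resolve this by \emph{replacing} the raw near-optimizer $\pi^n_0$ by $\pi^n:=\pi^n_{t_n}$ along a near-optimal controlled curve, choosing $t_n$ as a minimizer of $\overline\cE(\pi^n_\cdot)$ on $[0,T_n]$; the budget that controls $a\overline\cE(\pi^n_{t_n})$ comes precisely from the $-a\cE$ term in $g^\dagger_\cE$ produced by the controlled EVI. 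This is the step your plan is silent on, and it is not a routine compactness/uniform-integrability argument.

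Second, the supersolution argument is \emph{not} "entirely symmetric" with a "near-optimal control realizing $\Phi(\pi_n)$ from below," and attempting it that way would fail. In the paper's Section~\ref{sec:supersolution}, the regularization step uses the uncontrolled $\EVI_\kappa$-gradient flow of $\cE$ started from the near-optimizers, and exploits the monotonicity \eqref{eq:monotonicity-entropy} of $\cE$ along the gradient flow to bound $\cE(\mu^n)$ — a near-optimal controlled curve does not give this. More importantly, the verification step does not use a near-optimal control at all: the required inequality has the opposite sign, so "completing the square" must be realized as a \emph{supremum over test controls}. The paper fixes $u_t(x)=\nabla\psi(x)$ with $\psi\in\cC^\infty_c(\R^d)$ (the resulting curve being the EVI gradient flow of $\cE+\int\psi\,\dd\mu$, whose admissibility has to be justified separately), passes to the $n\to\infty$ limit to obtain \eqref{eq:corduroy_init} for every $\psi$, and only then optimizes over $\psi$ using the density of $\{\nabla\psi\}$ in $L^2(\mu^0)$ to recover $g^\ddagger_{W_2}(\mu^0)$. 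A near-optimal control for the DPP problem has no reason to converge to the specific vector field $a\botmap{\mu^0}{\gamma}+\nabla\varphi(\cdot)\cdot\botmap{\mu^0}{\bm\pi}$ that saturates Young's inequality, so the strategy you describe would not produce the correct Hamiltonian on the supersolution side.

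Finally, two minor remarks: the paper's final test sequence is the \emph{constant} sequence $\pi_n\equiv\pi^0$ (resp.\ $\mu^0$), not a diagonal one, and the $W_p$-compactness for $p<2$ follows directly from $W_2$-boundedness (Lemma~\ref{lemma:Wp-compactness}), not from any extra uniform integrability tied to the superlinearity of $U$; that hypothesis is used for the lower semicontinuity of $\cE$ and for the gradient-flow theory, not for compactness of the optimizing sequence.
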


The proof of the above theorem is given in Sections \ref{sec:subsolution} and \ref{sec:supersolution}. 

Applying now the comparison principle stated in \cite[Corollary 3.17]{CoKrTo23} to $\Phi^*$ and $\Phi_*$, we obtain the following corollary.

\begin{corollary}
Under Assumption \ref{ass: OT energy functional}, let $h \in C_b(\mathcal P_p(\R^d))$ for some $p<2$  be weakly continuous. Then the value function $\Phi$ defined in \eqref{def:value_fun} is the unique  viscosity solution of the Hamilton-Jacobi equations \eqref{eqn:differential_equation_tildeH1} and \eqref{eqn:differential_equation_tildeH2}, where $h^\dag = h^\ddag=h$, $A_\dag = H_\dag$, and $A_\ddag = H_\ddag$ as defined in Definition \ref{definition:H}. 
\end{corollary}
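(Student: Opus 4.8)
\emph{Strategy.} The plan rests on three ingredients. First, a dynamic programming principle for $\Phi$: the soft bound $\cA(\mu_\cdot,u_\cdot)\le\cA_{0,T}(\mu_\cdot,u_\cdot)+e^{-\lambda^{-1}T}\Phi(\mu_T)$ (by time-shifting the tail of any admissible curve) together with its reverse $\Phi(\mu)\ge\cA_{0,T}(\sigma_\cdot,w_\cdot)+e^{-\lambda^{-1}T}\Phi(\sigma_T)$ for any admissible $(\sigma_\cdot,w_\cdot)$ on $[0,T]$ from $\mu$ (by gluing an $\eta$-optimal tail at $\sigma_T$). Second, the controlled EVI of Proposition~\ref{prop:modif_EVI}, which along $(\mu_\cdot,u_\cdot)\in\admt$ yields, for every $\nu\in\cD(\cE)$ and a.e.\ $t$,
\[
\tfrac12\tfrac{\dd}{\dd t}W_2^2(\mu_t,\nu)\le\cE(\nu)-\cE(\mu_t)-\tfrac\kappa2 W_2^2(\mu_t,\nu)+\|u_t\|_{L^2(\mu_t)}\,W_2(\mu_t,\nu).
\]
Third, the energy/metric a priori bounds along near-optimal curves from Section~\ref{section:preliminaryResults} (themselves consequences of the controlled EVI). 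The bridge to $H_\dagger,H_\ddagger$ is purely algebraic: writing $B(\pi):=aW_2(\pi,\rho)+\nabla\varphi(\tfrac12W_2^2(\pi,\bm\mu))\cdot W_2(\pi,\bm\mu)$ and letting $G_0^\dagger(\pi)$ be the two ``$\cE$-lines'' of $g^\dagger$, one has $g^\dagger=G_0^\dagger+\tfrac12 B^2$; chaining the controlled EVI through the smooth \emph{increasing} $\varphi$ (so $\partial_i\varphi,a>0$ preserve inequalities) and completing the square against the running cost $-\tfrac12\|u_t\|^2_{L^2(\mu_t)}$ gives, along any controlled curve, $-\tfrac{\dd}{\dd t}f^\dagger(\mu_t)+\tfrac12\|u_t\|^2_{L^2(\mu_t)}\ge -G_0^\dagger(\mu_t)+\tfrac12(\|u_t\|_{L^2(\mu_t)}-B(\mu_t))^2-\tfrac12B(\mu_t)^2\ge -g^\dagger(\mu_t)$; dually, driving a curve $(\sigma_\cdot,w_\cdot)$ by the optimal feedback $w_t\approx\otgrad f^\ddagger(\sigma_t)$ (frozen at the initial point over short times) and using \eqref{eq:subdifferential} with Cauchy--Schwarz gives $\tfrac{\dd}{\dd t}f^\ddagger(\sigma_t)-\tfrac12\|w_t\|^2_{L^2(\sigma_t)}\ge g^\ddagger(\sigma_t)$.

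\emph{Subsolution.} Fix $(f^\dagger,g^\dagger)\in H_\dagger$, put $u:=\Phi^*$, $M:=\sup(u-f^\dagger)$. Since $\varphi$ is bounded below on $[0,\infty)^k$, $f^\dagger$ is $W_2$-coercive, so $u-f^\dagger$ (bounded, $W_p$-upper semicontinuous) attains its maximum at some $\bar\pi$; moreover along any maximizing sequence $u$ and $f^\dagger$ converge to their values at the limit, which forces the $W_2$-distances to $\rho,\bm\mu$ (and the value of $\varphi$) to converge as well. I would then, for each such approximant, use that $\Phi^*$ is the $W_p$-upper relaxation of $\Phi$ and a short gradient-flow regularization to enter $\cD(\cI)$, and diagonalize, obtaining $\pi_n\in\cD(\cI)$ with $\pi_n\to\bar\pi$ in $W_p$, $\Phi(\pi_n)\to u(\bar\pi)$ (via the DPP applied to the short flows), hence $u(\pi_n)-f^\dagger(\pi_n)\to M$, which is \eqref{eqn:viscsub1}. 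For \eqref{eqn:viscsub2} I would argue by contradiction: if $u(\pi_n)-\lambda g^\dagger(\pi_n)-h(\pi_n)\ge3\delta>0$ along a subsequence, take $\eps_n$-optimal $(\mu^n_\cdot,v^n_\cdot)$ for $\Phi(\pi_n)$; feeding the soft DPP bound together with $\Phi(\pi_n)\ge f^\dagger(\pi_n)+M-o(1)$ and $\Phi(\mu^n_T)\le f^\dagger(\mu^n_T)+M$ into the telescoping identity for $e^{-\lambda^{-1}t}f^\dagger(\mu^n_t)$, inserting the square-completion estimate, and using $f^\dagger+M\ge u$, should yield, for every $T>0$ and every $n$,
\[
\lambda^{-1}\!\int_0^T e^{-\lambda^{-1}t}\big(u(\mu^n_t)-h(\mu^n_t)-\lambda g^\dagger(\mu^n_t)\big)\,\dd t\ \le\ \delta_n\xrightarrow[n\to\infty]{}0 .
\]
It would then remain to show the integrand stays $\ge\delta$ on a window $[0,\tau]$ with $\tau$ independent of $n$: $h$ is $W_p$-continuous; $\cE(\mu^n_t)$ and the $W_2$-distances to $\rho,\bm\mu$ move by $o(1)$ uniformly in $n$ on $[0,\tau]$ because $\int_0^\tau\|v^n_t\|^2_{L^2(\mu^n_t)}\,\dd t$ and $\int_0^\tau|\dot\mu^n_t|^2\,\dd t$ are uniformly bounded (from $\cA(\mu^n_\cdot,v^n_\cdot)\ge-\|h\|_\infty-1$ and the dissipation bounds of Section~\ref{section:preliminaryResults}); and the one point where $W_p$-upper semicontinuity of $\Phi^*$ is not enough, namely the lower bound $u(\mu^n_t)\ge u(\pi_n)-o(1)$, is recovered from the reverse DPP bound on $[0,t]$. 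This contradicts $\delta_n\to0$.

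\emph{Supersolution.} For $\Phi_*$ and $(f^\ddagger,g^\ddagger)\in H_\ddagger$ the scheme is dual: $-f^\ddagger$ is $W_2$-coercive and $W_p$-lower semicontinuous, so $\Phi_*-f^\ddagger$ has a minimum at $\bar\gamma$, with $m:=\inf(\Phi_*-f^\ddagger)$. I would take $\gamma_m\to\bar\gamma$ in $W_p$ with $\Phi(\gamma_m)\to\Phi_*(\bar\gamma)$ and, from each $\gamma_m$, a short controlled curve $(\sigma^m_\cdot,w^m_\cdot)$ driven by the frozen optimal feedback of $f^\ddagger$ concatenated with an $\eta$-optimal tail. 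The reverse DPP bound, $\Phi\ge\Phi_*$, the inequality $f^\ddagger+m\le\Phi_*$, and the dual square-completion estimate should produce the reverse of the displayed inequality; combined with $\Phi(\gamma_m)\to\Phi_*(\bar\gamma)$ and $\eta\to0$ this forces $\Phi_*(\pi_n)-\lambda g^\ddagger(\pi_n)-h(\pi_n)\ge-o(1)$ along the diagonal sequence $\pi_n:=\sigma^{m(n)}_{t_n}$, which also satisfies $\Phi_*(\pi_n)-f^\ddagger(\pi_n)\to m$ — i.e.\ \eqref{eqn:viscsup1}--\eqref{eqn:viscsup2}.

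\emph{Main obstacle.} The algebra is dictated by $g^\dagger=G_0^\dagger+\tfrac12B^2$ and \eqref{eq:subdifferential} and is not the difficulty; the hard part will be the analytic bookkeeping forced by the mismatch between the $W_2$-built test functions and EVI and the $W_p$-built relaxations, compounded by the merely one-sided semicontinuity of $\Phi^*,\Phi_*$. Concretely one must (i) drive the maximizer/minimizer and the constructed sequences into $\cD(\cI)$ so that $g^\dagger,g^\ddagger$ stay finite — precisely the regularizing effect of the gradient flow as quantified by the controlled EVI — while keeping the approximants close in $W_2$ (not just $W_p$) to the optimizer, a step that must be iterated at several levels and is the most delicate point; (ii) obtain metric \emph{and} energy bounds along \emph{near-optimal} controlled curves that are uniform in $n$, so that the defect $u-h-\lambda g^\dagger$ cannot deteriorate before a fixed time $\tau$; and (iii) compensate the failure of lower (resp.\ upper) semicontinuity of $\Phi^*$ (resp.\ $\Phi_*$) by repeated use of the DPP on vanishing time windows. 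Proving the DPP itself — admissibility of concatenations and selection of near-optimal tails on the infinite-dimensional Wasserstein state space — is the remaining non-trivial preliminary.
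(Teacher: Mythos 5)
Your proposal misidentifies what the corollary actually requires. The sub- and supersolution properties of $\Phi^*$ and $\Phi_*$ that you sketch are exactly the content of the paper's main \emph{Theorem} (proved in Propositions \ref{prop:subsolution} and \ref{prop:supersolution}); the Corollary is a short \emph{deduction} from that theorem together with the comparison principle, and your write-up never makes that deduction. Concretely, the Corollary asserts two things you never address: (i) that $\Phi$ \emph{itself} (not merely its relaxations $\Phi^*$ and $\Phi_*$) is a viscosity solution, and (ii) that this solution is \emph{unique}. Neither follows from having a subsolution $\Phi^*$ and a supersolution $\Phi_*$: one must invoke the comparison principle of \cite[Corollary 3.17]{CoKrTo23} (this is where the extra hypothesis that $h$ be weakly continuous is used). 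That principle gives $\sup(\Phi^* - \Phi_*)\le 0$, while by construction $\Phi_*\le \Phi\le\Phi^*$; hence $\Phi_*=\Phi=\Phi^*$, so $\Phi$ is simultaneously an upper semicontinuous subsolution and a lower semicontinuous supersolution, i.e.\ a viscosity solution. Uniqueness then follows by running the comparison principle once more against any other candidate solution. Without this argument the corollary is not proved.

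A secondary issue: even read as a proof of the main theorem, your sketch differs from the paper in a way that is likely to fail. The paper does not argue by contradiction from $u(\pi_n)-\lambda g^\dagger(\pi_n)-h(\pi_n)\ge 3\delta$, nor does it try to keep the defect bounded below on a fixed time window $[0,\tau]$ uniformly in $n$; rather it constructs, via Lemma \ref{lemma:raw_estimate_Phifdagger} and Proposition \ref{proposition:existence_regularity_minimizer_subsol}, a \emph{single} limit point $\pi^0$ at which \eqref{eqn:viscsub2} holds with the constant sequence $\pi_n=\pi^0$, using the controlled EVI, Lemma \ref{lemma:control_controlled_paths}, Fatou and the specific scales $T_n=n^{-1}$, $\tau_n=n^{-1/2}$. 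Your claim that ``the $W_2$-distances to $\rho,\bm\mu$ and the value of $\varphi$ converge'' along a maximizing sequence already uses Lemma \ref{lemma:upgrade_semicontinuity_composition}, but turning it into the uniform-in-$n$ estimate you posit for $u(\mu^n_t)-h(\mu^n_t)-\lambda g^\dagger(\mu^n_t)$ on $[0,\tau]$ is not justified: $g^\dagger$ involves $\cE(\mu^n_t)$, which is only lower semicontinuous, and your proposed mechanism (uniform $L^2$-bounds on $\dot\mu^n_\cdot$) controls the metric but not the entropy term. This is precisely why the paper introduces the intermediate points $\pi^n_{t_n}$ with $t_n$ a minimizer of $\overline\cE(\pi^n_\cdot)$ on $[0,T_n]$ before passing to the limit.
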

 
\section{Preliminary results} \label{section:preliminaryResults}

In this section, we start out with establishing the dynamic programming principle in Section \ref{subsection:DPP}. We proceed with introducing a modified version of EVI in Section \ref{subsection:controlledEVI}, and close off in Section \ref{subsection:continuityTransportMap} by establishing a continuity property for the optimal transport map.

\subsection{The dynamic programming principle} \label{subsection:DPP}

We first show that the value function $\Phi$ introduced in \eqref{def:value_fun} complies with the Dynamic Programming Principle (DPP). The proof of the DPP given here is a generalization of the classical finite dimensional proof that can be found for example in \cite{BCDbook}. For other DPP proofs in the infinite dimensional case, that can apply  to some of the cases covered in our context, we refer to \cite{DPT22}, \cite{cosso2023master}.  Please note that, from now on, to ease the notation, we are setting $\lambda=1$. 

\begin{proposition}[Dynamic Programming Principle] \label{proposition:DPP}
For all $\mu\in\cP_2(\R^d)$ and $T>0$ we have:
\begin{equation}\label{eq:DPP}
\tag{DPP}
\Phi(\mu)=\sup_{\substack{(\mu_{\cdot},u_{\cdot}) \in \admt \\ \mu_0=\mu}} \int_{0}^{T} e^{-t}\Big(-\frac12\|u_t\|^2_{L^2(\mu_t)}+h(\mu_t)\Big)  \dd t + e^{-T}\Phi(\mu_T).
\end{equation}
\end{proposition}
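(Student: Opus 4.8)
The proof is the standard two-inequality argument adapted to the infinite-horizon Wasserstein control problem. I will establish
\[
\Phi(\mu) \le \sup_{(\mu_\cdot,u_\cdot)\in\admt,\ \mu_0=\mu}\Big[\int_0^T e^{-t}\big(-\tfrac12\|u_t\|^2_{L^2(\mu_t)}+h(\mu_t)\big)\,\dd t + e^{-T}\Phi(\mu_T)\Big]
\]
and the reverse inequality separately. For the inequality ``$\le$'', I fix an admissible competitor $(\mu_\cdot,u_\cdot)\in\admerg$ with $\mu_0=\mu$ that is $\eps$-optimal for $\Phi(\mu)$, split the action as $\cA(\mu_\cdot,u_\cdot)=\cA_T(\mu_\cdot,u_\cdot)+\int_T^\infty e^{-t}(\cdots)\,\dd t$, perform the change of variable $s=t-T$ in the tail integral and use the semigroup factor $e^{-t}=e^{-T}e^{-s}$, so that the tail equals $e^{-T}\cA(\tilde\mu_\cdot,\tilde u_\cdot)$ where $(\tilde\mu_s,\tilde u_s):=(\mu_{s+T},u_{s+T})$. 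One checks $(\tilde\mu_\cdot,\tilde u_\cdot)\in\admerg$ with $\tilde\mu_0=\mu_T$ (the controlled continuity equation \eqref{eq:controlled_GF} is autonomous, so time-translation preserves admissibility, and the exponentially weighted integrability of the control transfers from the original pair). Hence the tail is $\le e^{-T}\Phi(\mu_T)$, giving the bound up to $\eps$; let $\eps\to0$.

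For the inequality ``$\ge$'', I fix $\eps>0$, pick a near-optimal pair $(\mu_\cdot,u_\cdot)\in\admt$ on $[0,T]$ with $\mu_0=\mu$, and then pick a near-optimal $(\nu_\cdot,w_\cdot)\in\admerg$ with $\nu_0=\mu_T$ realizing $\Phi(\mu_T)$ up to $\eps$. The key construction is the \emph{concatenation}: define $(\hat\mu_t,\hat u_t)$ to equal $(\mu_t,u_t)$ on $[0,T]$ and $(\nu_{t-T},w_{t-T})$ on $[T,\infty)$. I must verify that this concatenated pair lies in $\admerg$: the curve $\hat\mu_\cdot$ is in $AC^2([0,T'],\cP_2)$ for every $T'$ because both pieces are (the gluing is continuous in $W_2$ since $\mu_T=\nu_0$, and square-integrability of the metric speed is additive over the two intervals); the control $\hat u_\cdot$ satisfies \eqref{eq:controlled_GF} on each piece in the distributional sense and hence globally, since the equation involves no time derivative jump condition beyond continuity of $\mu$; the $H^{-1}$ constraint passes to the concatenation; and the exponentially-weighted $L^2$ bound on $\hat u$ follows from $\int_0^T\|u_t\|^2<\infty$ plus $\int_0^\infty e^{-t}\|w_t\|^2<\infty$ after the shift. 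Then $\cA(\hat\mu_\cdot,\hat u_\cdot)=\cA_T(\mu_\cdot,u_\cdot)+e^{-T}\cA(\nu_\cdot,w_\cdot)\ge \cA_T(\mu_\cdot,u_\cdot)+e^{-T}\Phi(\mu_T)-\eps e^{-T}$, and taking the supremum over $(\mu_\cdot,u_\cdot)$ on $[0,T]$ and letting $\eps\to0$ yields ``$\ge$''.

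\textbf{Main obstacle.} The delicate point is the measurable-selection/approximation issue hidden in the ``$\ge$'' direction: the near-optimal continuation $(\nu_\cdot,w_\cdot)$ for $\Phi(\mu_T)$ must be chosen in a way compatible with gluing, and in the classical finite-dimensional proof one invokes a measurable selection theorem to build, from a family of $\eps$-optimal continuations indexed by the endpoint, a single admissible control on the whole half-line. Here, since we only need the scalar identity \eqref{eq:DPP} and not a feedback control, it suffices to do the selection for the \emph{fixed} endpoint $\mu_T$ attached to the chosen $(\mu_\cdot,u_\cdot)$, so no genuine selection theorem is needed — but one must still be careful that the concatenated pair genuinely satisfies all four bullet points of Definition \ref{def:admissible}, in particular that the distributional formulation of \eqref{eq:controlled_GF} is stable under concatenation at the matching time $T$ (this uses that test functions in $\cC^\infty_c([0,\infty)\times\R^d)$ can be split via the continuity of $t\mapsto\mu_t$ at $t=T$, with no boundary term appearing because $\hat\mu$ is continuous there). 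A secondary technical point is checking that $\Phi$ is finite (so the manipulations are not vacuous), which follows from boundedness of $h$ and the trivial lower bound given by the constant pair $(\mu,0)$, together with the uniform-in-time a priori energy/metric estimates that will be established in Section \ref{section:preliminaryResults}.
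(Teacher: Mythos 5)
Your proposal is correct and follows essentially the same two-inequality strategy as the paper: split the action at time $T$, use a time-shift to recognize the tail as a discounted value, and concatenate a near-optimal prefix with a near-optimal continuation for the reverse inequality. The extra commentary you give on verifying the distributional formulation of the controlled continuity equation under concatenation, and on why no measurable-selection theorem is needed for the fixed-endpoint version, fills in details the paper leaves implicit but does not change the argument; the paper also sidesteps the finiteness issue you flag by treating the $w(\mu)=+\infty$ and $\Phi(\mu)=+\infty$ cases separately before running the main computation.
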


\begin{proof}
Let us fix $\mu\in \mathcal P_2(\R^d)$ and $T>0$ and name $w(\mu)$ the right-hand side of \eqref{eq:DPP}. Recall that 
\[
\Phi(\mu) := \sup_{\substack{(\mu_{\cdot},u_{\cdot}) \in \admerg,\\ \mu_0=\mu}} \int_0^\infty e^{-t}\Big(-\frac12\|u_t\|^2_{L^2(\mu_t)}+h(\mu_t)\Big)  \dd t.
\]
We first show that $\Phi(\mu)\leq w(\mu)$. If $w(\mu)=+\infty$ there is nothing to prove. Otherwise, for all $(\mu_{\cdot},u_{\cdot}) \in \admerg$ with $\mu_0=\mu$ we have (recall \eqref{def:value_fun}, \eqref{eq:action_T} for the notation)
\begin{align*}
\cA(\mu_\cdot,u_\cdot) & = \cA_T(\mu_\cdot,u_\cdot) + \int_{T}^{\infty} e^{-t}\Big(-\frac12 \|u_t\|^2_{L^2(\mu_t)} + h(\mu_t)\Big) \dd t \\ 
& = \cA_T(\mu_\cdot,u_\cdot) + \int_0^\infty e^{-s-T}\Big(-\frac12 \|u_{s+T}\|^2_{L^2(\mu_{s+T})} + h(\mu_{s+T})\Big) \dd s \\
& = \cA_T(\mu_\cdot,u_\cdot) + e^{-T}\int_0^\infty e^{-t}\Big(-\frac12\|\tilde u_{t}\|^2_{L^2(\tilde \mu_{t})}+h(\tilde \mu_{t})\Big)  \dd t,
\end{align*}
where $\tilde u(t) := u(t+T)$ and $\tilde \mu(t) := \mu(t+T)$ are such that $(\tilde \mu_{\cdot},\tilde u_{\cdot}) \in \admerg, \tilde \mu_0=\mu(0+T)=\mu_T$. Taking the supremum we obtain $\Phi(\mu)\leq w(\mu)$. 

Let us now prove the opposite inequality. If $\Phi(\mu) = +\infty $ there is nothing to prove. Otherwise, for all $(\mu_{\cdot}, u_{\cdot}) \in \admt$ with $\mu_0=\mu$ and $\varepsilon>0$, let us consider $(\tilde\mu_{\cdot},\tilde u_{\cdot}) \in \admerg, \tilde \mu_0=\mu(T)$ such that
\[
\cA_{\infty}(\tilde\mu.,\tilde u.) \geq \Phi(\mu(T))-\varepsilon.
\]
Define now
\begin{equation*}
    \bar u (t)=
\begin{cases}
    u(t) & \text{if } 0 \leq t \leq T, \\
    \tilde u(t-T) & \text{if } t \geq T,
\end{cases}
\end{equation*}
and 
\begin{equation*}
    \bar \mu(t) = 
    \begin{cases}
    \mu(t) & \text{if } 0 \leq t \leq T, \\
    \tilde \mu(t-T) & \text{if } t \geq T.
    \end{cases}
\end{equation*}
Then we have $(\bar \mu_{\cdot},\bar u_{\cdot}) \in \admerg$ with $\bar \mu_0=\mu$, so that
\begin{align*}
\Phi(\mu) & \geq \cA_\infty(\bar \mu.,\bar u.) \\ 
& = \cA_T(\mu_\cdot,u_\cdot) + \int_T^\infty e^{-t}\Big(-\frac12\|\tilde u_{t-T}\|^2_{L^2(\tilde \mu_{t-T})} + h(\tilde\mu_{t-T})\Big) \dd t \\
& = \cA_T(\mu_\cdot,u_\cdot) + e^{-T} \int_{0}^{\infty}e^{-s}\Big(-\frac12\|\tilde u_{s}\|^2_{L^2(\tilde \mu_{s})} + h(\tilde \mu_{s})\Big) \dd s \\
& \geq \cA_T(\mu_\cdot,u_\cdot) + e^{-T}\big(\Phi(\mu(T)) - \eps\big).
\end{align*}
By the arbitrariness of $(\mu_{\cdot}, u_{\cdot})$ and $\eps$, we obtain $\Phi(\mu) \geq w(\mu)$.
\end{proof}

As an immediate consequence of the DPP, we can show the upper semicontinuity of the value function $\Phi$ along admissible curves.

\begin{corollary} \label{corollary:Phi_usc_admissiblecurves}
Let $h$ be bounded and $W_2$-continuous and $(\mu.,u.)\in\admerg$. Then $|\Phi| \leq \|h\|_\infty$ and $t \mapsto \Phi(\mu_t)$ is upper semicontinuous at $t = 0$.

In particular, if $\mu.$ is the gradient flow of $\cE$ starting at $\mu_0$, then
\[
\forall t\geq0  \quad \Phi(\mu_t) \leq \Phi(\mu_0) + 2 t \|h\|_\infty.
\]
\end{corollary}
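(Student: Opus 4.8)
The plan is to deduce both claims directly from the Dynamic Programming Principle (Proposition~\ref{proposition:DPP}), using the boundedness of $h$ as the only additional input. First I would establish the two-sided bound on $\Phi$. For the upper bound, take any admissible $(\mu_\cdot,u_\cdot)\in\admerg$ with $\mu_0=\mu$ and estimate
\[
\cA(\mu_\cdot,u_\cdot) = \int_0^\infty e^{-t}\Big(-\tfrac12\|u_t\|^2_{L^2(\mu_t)} + h(\mu_t)\Big)\dd t \leq \int_0^\infty e^{-t}\|h\|_\infty\,\dd t = \|h\|_\infty,
\]
so taking the supremum gives $\Phi(\mu)\leq\|h\|_\infty$. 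For the lower bound, plug in the constant pair $(\mu,0)$, which is admissible (as noted after Definition~\ref{def:admissible}), obtaining $\Phi(\mu)\geq \int_0^\infty e^{-t}h(\mu)\,\dd t = h(\mu)\geq -\|h\|_\infty$. Hence $|\Phi|\leq\|h\|_\infty$.

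Next I would prove the upper semicontinuity of $t\mapsto\Phi(\mu_t)$ at $t=0$ along a fixed $(\mu_\cdot,u_\cdot)\in\admerg$. Apply \eqref{eq:DPP} with horizon $T>0$ along this curve (keeping the same control on $[0,T]$): this yields
\[
\Phi(\mu_0) \geq \int_0^T e^{-t}\Big(-\tfrac12\|u_t\|^2_{L^2(\mu_t)} + h(\mu_t)\Big)\dd t + e^{-T}\Phi(\mu_T),
\]
since the right-hand side of \eqref{eq:DPP} is a supremum over all admissible curves and our curve is one competitor. Rearranging,
\[
e^{-T}\Phi(\mu_T) \leq \Phi(\mu_0) - \int_0^T e^{-t}\Big(-\tfrac12\|u_t\|^2_{L^2(\mu_t)} + h(\mu_t)\Big)\dd t,
\]
and since $\tfrac12\|u_t\|^2\geq0$ and $h\geq -\|h\|_\infty$, the integrand is bounded below by $-e^{-t}\|h\|_\infty$, so the subtracted integral is at least $-\|h\|_\infty(1-e^{-T})$; combined with $e^{-T}\Phi(\mu_T)\geq e^{-T}\Phi(\mu_T)$ and letting $T\downarrow0$, using $e^{-T}\to1$ and the integral $\to0$, we get $\limsup_{T\downarrow0}\Phi(\mu_T)\leq\Phi(\mu_0)$. (The boundedness $|\Phi|\leq\|h\|_\infty$ from the first step is what makes the $e^{-T}\Phi(\mu_T)$ term behave: $\Phi(\mu_T)\leq e^{T}\big(\Phi(\mu_0)+\|h\|_\infty(1-e^{-T})\big)$, and letting $T\downarrow0$ gives the claim.)

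Finally, for the gradient-flow statement, let $\mu_\cdot$ be the gradient flow of $\cE$ started at $\mu_0$; by the EVI theory under Assumption~\ref{ass: OT energy functional} this is an admissible curve with $u_\cdot\equiv 0$, hence $(\mu_\cdot,0)\in\admerg$. I would apply the semigroup/time-shift structure: fix $t\geq0$ and note that the shifted curve $(\mu_{s+t})_{s\geq0}$ is again a gradient flow (hence admissible) started at $\mu_t$. From the rearranged DPP inequality above with horizon $t$, $e^{-t}\Phi(\mu_t)\leq\Phi(\mu_0) - \int_0^t e^{-s}h(\mu_s)\,\dd s \leq \Phi(\mu_0) + \|h\|_\infty(1-e^{-t})$, so $\Phi(\mu_t)\leq e^{t}\Phi(\mu_0) + e^t\|h\|_\infty(1-e^{-t}) = e^t\Phi(\mu_0)+\|h\|_\infty(e^t-1)$. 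Since $|\Phi(\mu_0)|\leq\|h\|_\infty$, we have $e^t\Phi(\mu_0)\leq\Phi(\mu_0) + (e^t-1)\|h\|_\infty$, giving $\Phi(\mu_t)\leq\Phi(\mu_0) + 2(e^t-1)\|h\|_\infty$; finally $e^t-1\leq t e^t$ is too crude — instead one restricts to the regime or simply notes the paper's stated bound $2t\|h\|_\infty$ should follow from a sharper bookkeeping or by invoking a local (small-$t$) estimate and iterating. The main obstacle I anticipate is precisely this last bookkeeping: getting the clean linear-in-$t$ constant $2t\|h\|_\infty$ rather than an exponential factor, which likely requires applying the DPP on a fine partition of $[0,t]$ and summing the increments $\Phi(\mu_{t_{k+1}})-\Phi(\mu_{t_k})$, each controlled by $2(t_{k+1}-t_k)\|h\|_\infty$ up to higher-order terms, then passing to the limit.
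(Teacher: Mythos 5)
Your overall plan (DPP plus boundedness of $h$) matches the paper's, but there are two substantive gaps and one minor inaccuracy.

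\textbf{The minor issue.} For the lower bound $\Phi(\mu)\geq -\|h\|_\infty$ you plug in the constant pair $(\mu,0)$, citing the remark after Definition~\ref{def:admissible}. But that pair solves \eqref{eq:controlled_GF} only if $\mu$ is a stationary point of the gradient flow; the paper's own proof instead uses the (uncontrolled) gradient flow started at $\mu$, which is always admissible and gives the same estimate $\Phi(\mu)\geq\int_0^\infty e^{-t}h(\mu_t)\,\dd t\geq-\|h\|_\infty$.

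\textbf{The upper semicontinuity step has a wrong estimate.} You assert that the integrand $e^{-t}\bigl(-\tfrac12\|u_t\|^2_{L^2(\mu_t)}+h(\mu_t)\bigr)$ is bounded below by $-e^{-t}\|h\|_\infty$; this is false, since $-\tfrac12\|u_t\|^2$ has no pointwise lower bound. What you actually need, and what the paper uses, is the local integrability guaranteed by \eqref{eq:integrability-control}: $\int_0^T\|u_t\|^2_{L^2(\mu_t)}\,\dd t<\infty$, so that $\int_0^T e^{-t}\tfrac12\|u_t\|^2_{L^2(\mu_t)}\,\dd t\to 0$ as $T\downarrow0$. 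The paper makes this precise by writing the integral with an indicator $\chi_{[0,T]}$ and invoking the $\limsup$ form of Fatou's lemma. Once fixed, the $\limsup_{T\downarrow0}\Phi(\mu_T)\leq\Phi(\mu_0)$ conclusion is as you intend.

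\textbf{The gradient flow bound has a genuine gap, and your proposed fix is unnecessarily heavy.} You correctly arrive at $e^{-t}\Phi(\mu_t)\leq\Phi(\mu_0)+(1-e^{-t})\|h\|_\infty$, but then multiply by $e^t$ and end up with $\Phi(\mu_t)\leq\Phi(\mu_0)+2(e^t-1)\|h\|_\infty$, which is weaker than the claimed $\Phi(\mu_0)+2t\|h\|_\infty$ (since $e^t-1\geq t$). The iteration-over-a-fine-partition remedy you suggest would work but is overkill. The paper's trick is to \emph{not} multiply by $e^t$: instead write
\[
\Phi(\mu_t)=e^{-t}\Phi(\mu_t)+(1-e^{-t})\Phi(\mu_t)\leq\Phi(\mu_0)+(1-e^{-t})\|h\|_\infty+(1-e^{-t})\Phi(\mu_t),
\]
then bound $(1-e^{-t})\Phi(\mu_t)\leq(1-e^{-t})\|h\|_\infty$ using $|\Phi|\leq\|h\|_\infty$, giving $\Phi(\mu_t)\leq\Phi(\mu_0)+2(1-e^{-t})\|h\|_\infty\leq\Phi(\mu_0)+2t\|h\|_\infty$ directly, since $1-e^{-t}\leq t$ for all $t\geq0$. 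The key is to keep the factor $e^{-t}\leq 1$ on the left rather than inflate the right by $e^t$.
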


\begin{proof}
The boundedness of $\Phi$ is straightforward and does not actually rely on \eqref{eq:DPP}. Indeed, using the gradient flow starting from $\mu$ as a competitor in the definition of $\Phi(\mu)$ gives $\Phi(\mu) \geq -\|h\|_\infty$. The upper bound is trivial.

As for the upper semicontinuity of $t \mapsto \Phi(\mu_t)$, by \eqref{eq:DPP} we have for all $T>0$
\[
\begin{split}
e^{-T}\Phi(\mu_T) & \leq \Phi(\mu_0) + \int_0^T e^{-t} \left(\frac{1}{2}\|u_t\|^2_{L^2(\mu_t)} - h(\mu_t) \right) \dd t \\
& = \Phi(\mu_0) + \int_0^\infty \chi_{[0,T]}(t)e^{-t} \left(\frac{1}{2}\|u_t\|^2_{L^2(\mu_t)} - h(\mu_t) \right) \dd t,
\end{split}
\]
where $\chi_{[0,T]}$ is the indicator function of the interval $[0,T]$. Observing that the integrand function converges pointwise to 0 as $T\to 0$, by \eqref{eq:integrability-control} we can use the limsup version of Fatou's lemma to find
\[
\limsup_{T \downarrow 0} \Phi(\mu_T) \leq \Phi(\mu_0).
\]
The final part of the statement follows noting that the control associated to a gradient flow trajectory is $u_t \equiv 0$ for all $t \geq 0$, so that plugging this information into \eqref{eq:DPP} yields
\[
\begin{split}
\Phi(\mu_T) & \leq (1-e^{-T})\Phi(\mu_T)+ \Phi(\mu_0) - \int_0^T e^{-t}   h(\mu_t) \dd t \\
& \leq  \Phi(\mu_0) + (1-e^{-T})\Phi(\mu_T) + (1-e^{-T}) \|h\|_\infty \\
& \leq  \Phi(\mu_0) + 2(1-e^{-T}) \|h\|_\infty\leq \Phi(\mu_0) + 2T \|h\|_\infty
\end{split}
\]
where in the last but one inequality  we use the trivial upper bound $\Phi(\mu_T)\leq \|h\|_\infty$.
\end{proof}

\subsection{A controlled EVI} \label{subsection:controlledEVI}

Let us recall the definition of EVI gradient flow and  collect all the properties we shall need in the sequel. Although the following discussion could be carried out in an abstract metric space $(\X,\sfd)$, we choose $(\X,\sfd) = (\cP_2(\R^d),W_2)$ as this will be our framework in the whole manuscript.

\begin{definition}
Given $\kappa \in \R$, a curve $(\mu_t)_{t \geq 0} \subseteq \cP_2(\R^d)$ is an $\EVI_\kappa$-gradient flow of $\cE$ provided it belongs to $AC_\loc((0,+\infty),\cP_2(\R^d)) \cap C([0,+\infty),\cP_2(\R^d))$ and
\[
\frac{\dd}{\dd t}W_2^2(\mu_t,\nu) + \frac{\kappa}{2}W_2^2(\mu_t,\nu) + \cE(\mu_t) \leq \cE(\nu), \qquad \forall \nu \in \cP_2(\R^d),\textrm{ a.e. } t>0.
\]
\end{definition}

From \cite[Theorem 3.5]{MuSa20} we know that the following properties hold:
\begin{enumerate}[(i)]
\item \emph{Contraction}.\ If $(\mu_t)$ is an $\EVI_\kappa$-gradient flow of $\cE$ starting from $\mu \in \overline{\cD(\cE)}$ and $(\nu_t)$ is a second $\EVI_\kappa$-gradient flow of $\cE$ starting from $\nu \in \overline{\cD(\cE)}$, then
\begin{equation}
\label{eq:contraction}
W_2(\mu_t,\nu_t) \leq e^{-\kappa t} W_2(\mu,\nu), \qquad \forall t \geq 0.
\end{equation}
This means that EVI-gradient flows are unique (provided they exist) and thus if there exists an EVI-gradient flow $(\mu_t)$ starting from $\mu$, then a 1-parameter semigroup $(\sfS_t)_{t \geq 0}$ is unambiguously associated to it via $\sfS_t(\mu) = \mu_t$.
\item \emph{Monotonicity}.\ For any $\mu \in \cP_2(\R^d)$, the map
\begin{equation}
\label{eq:monotonicity-entropy}
t \mapsto \cE(\sfS_t \mu) \quad \textrm{is non-increasing on } [0,\infty).
\end{equation}
\item \emph{Asymptotic expansion as $t \downarrow 0$}.\ If $\mu \in \cD(|\partial\cE|)$ and $\kappa \leq 0$, then for every $\nu \in \cD(\cE)$ and $t \geq 0$ it holds
\begin{equation}\label{eq:asymptotics}
\frac{e^{2\kappa t}}{2}W_2^2(\sfS_t\mu,\nu) - \frac12 W_2^2(\mu,\nu) \leq I_{2\kappa}(t)\left(\cE(\nu)-\cE(\mu)\right) + \frac{t^2}{2}|\partial\cE|^2(\mu),
\end{equation}
where $I_{2\kappa}(t) := \int_0^t e^{2\kappa s}\dd s$.
\end{enumerate}

As regards the existence, from \cite[Theorems 11.2.1 and 11.2.8]{AmGiSa08} we know that under Assumption \ref{ass: OT energy functional} $\cE$ introduced in \eqref{eq:def-functional2} generates  an $\EVI_\kappa$-gradient flow on $\cP_2(\R^d)$, where $\kappa :=\kappa_V+\kappa_W$.

\medskip

After this premise, let us now focus our attention on the admissible curves $\adm_T$, proving that they also satisfy an EVI-type inequality.

For sake of simplicity, we introduce the following notational convention for $(f^\dagger,g^\dagger) \in H_\dagger$ as in Definition \ref{definition:H}
\[
g^\dagger(\pi) = g^\dagger_\cE(\pi) + g^\dagger_{W_2}(\pi)
\]
where
\[
\begin{split}    
g^\dagger_{\cE}(\pi) & :=  a \Big(\cE(\rho) - \cE(\pi) - \frac{\kappa}{2} W_2^2(\pi,\rho) \Big) \\
& \qquad + \nabla\varphi\Big(\frac12 W_2^2(\pi,\bm\mu) \Big)\cdot\Big(\cE(\bm\mu) - \cE(\pi)\bm{1} - \frac{\kappa}{2} W_2^2(\pi,\bm\mu)\Big), \\
g^\dagger_{W_2}(\pi) & := \frac{1}{2} \left( a W_2(\pi,\rho) + \nabla\varphi\Big(\frac12 W_2^2(\pi,\bm\mu)\Big) \cdot W_2(\pi,\bm\mu) \right)^2.
\end{split}
\]
Likewise, for $(f^\ddagger,g^\ddagger) \in H_\ddagger$ as in Definition \ref{definition:H}, we write
\[
g^\ddagger(\mu) = g^\ddagger_\cE(\mu) + g^\ddagger_{W_2}(\mu)
\]
with
\[ 
\begin{split}
g^{\ddagger}_{\cE}(\mu) & := -a \Big(\cE(\gamma) - \cE(\mu) - \frac{\kappa}{2} W_2^2(\mu,\gamma)\Big) \\
& \qquad - \nabla\varphi\Big(\frac12 W_2^2(\mu,\bm\pi) \Big)\cdot\Big(\cE(\bm\pi) - \cE(\mu)\bm{1} - \frac{\kappa}{2} W_2^2(\mu,\bm\pi)\Big), \\
g^\ddagger_{W_2}(\mu) & := a^2 W_2^2(\mu,\gamma) - \frac{1}{2} \left(a W_2(\pi,\rho) + \nabla\varphi \Big(\frac12 W_2^2(\mu,\bm\pi)\Big) \cdot W_2(\mu,\bm\pi)\right)^2.
\end{split}
\]

Let us mention that $\cE$ may be negative, but only quadratically so. Indeed, we recall from \cite[Lemma 4.1]{CoKrTo21} that for any given $\nu \in \cP_2(\R^d)$, we can choose $c_1 \in( -\kappa, -\kappa + 1)$ and $c_2$ such that the functional
\begin{equation}\label{eq:quadratic-lower-bound}
\overline{\cE}(\mu) := \cE(\mu) + \frac{c_1}{2}W_2^2(\mu,\nu) + c_2
\end{equation}
has its infimum equal to $0$. Thus let us fix $\nu \in \cP_2(\bR^d)$ and $c_1,c_2$ as in \eqref{eq:quadratic-lower-bound} once for all.

We also introduce some notation about transportation maps that will be useful in the sequel.
We denote by 
$\otmap{\mu}{\nu}$ the optimal transport map that pushes $\mu$ onto $\nu$. We also denote by $\botmap{\mu}{\nu}=\otmap{\mu}{\nu}-\bm{id}$. For $\bm\mu=(\mu_1,\ldots,\mu_k)$ we write $\botmap{\pi}{\bm\mu}$ as short-hand notation for $(\botmap{\pi}{\mu_1},\ldots,\botmap{\pi}{\mu_k})$. Recall that, by \cite[Theorem 6.2.4]{AmGiSa08}, $\otmap{\mu}{\nu}$ exists whenever $\mu \ll \cL^d$.

The following Proposition contains a new version of EVI, that we name \emph{controlled EVI}. Due to the estimates that we can recover from it, this inequality  turns out  well suited to the study of controlled gradient flows. 

\begin{proposition}\label{prop:modif_EVI}
Let $(\pi_t,u_t) \in \admt$ and $f^\dagger, g^\dagger$ be as in Definition \ref{definition:H}. Then we have
\begin{equation}\label{eq:refined}
f^\dagger(\pi_T) - f^\dagger(\pi_0) \leq \int_0^T g^{\dagger}_{\cE}(\pi_t) - \ip{u_t}{a\botmap{\pi_t}{\rho} + \nabla\varphi \bigg(\frac{1}{2} W_2^2(\pi_t,\bm\mu) \bigg)\cdot\botmap{\pi_t}{\bm\mu}}_{L^2(\pi_t)} \dd t .
\end{equation}
Likewise, given $(\mu_t,u_t) \in \admt$ and $f^\ddagger, g^\ddagger$ be as in Definition \ref{definition:H}, it holds
\begin{equation}\label{eq:refined2}
f^\ddagger(\mu_T) - f^\ddagger(\mu_0) \geq \int_0^T g^{\ddagger}_{\cE}(\mu_t) + \ip{u_t}{a\botmap{\mu_t}{\gamma} + \nabla\varphi \bigg(\frac{1}{2} W_2^2(\mu_t,\bm\pi) \bigg)\cdot\botmap{\mu_t}{\bm\pi}}_{L^2(\mu_t)} \dd t .
\end{equation}


\end{proposition}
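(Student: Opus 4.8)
The key idea is to differentiate $t \mapsto f^\dagger(\pi_t)$ along the admissible curve and bound each term using the classical EVI/subdifferential inequalities available for $\cE$. First I would recall from Remark \ref{rmk:continuity_equation} that along $(\pi_\cdot,u_\cdot)\in\admt$ the curve solves the continuity equation $\partial_t\pi_t + \nabla\cdot(w_t\pi_t)=0$ with velocity $w_t = -\otgrad\cE(\pi_t) + u_t$ (this is precisely \eqref{eq:controlled_GF}), and that $\pi_t \in \cD(\cI)$ for a.e.\ $t$ so that optimal transport maps $\otmap{\pi_t}{\rho}, \otmap{\pi_t}{\mu_i}$ exist. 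The function $f^\dagger$ is a smooth increasing function of the half-squared distances $\tfrac12 W_2^2(\pi_t,\rho)$ and $\tfrac12 W_2^2(\pi_t,\mu_i)$, so by the chain rule it suffices to control the time derivative of $t\mapsto \tfrac12 W_2^2(\pi_t,\sigma)$ for a fixed $\sigma\in\cD(\cI)$.

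The core computation is the first-variation formula: for a.e.\ $t$,
\[
\frac{\dd}{\dd t}\frac12 W_2^2(\pi_t,\sigma) = -\int \ip{\otmap{\pi_t}{\sigma}(x)-x}{w_t(x)}\dd\pi_t(x) = \int \ip{\botmap{\pi_t}{\sigma}}{\otgrad\cE(\pi_t) - u_t}_{L^2(\pi_t)}.
\]
Now I split $\otgrad\cE(\pi_t) = -(-\otgrad\cE(\pi_t))$ and invoke the subdifferential characterization \eqref{eq:subdifferential}: since $-\otgrad\cE$ need not exist pointwise, the clean route is instead to use the $\EVI_\kappa$ inequality directly. Writing $w_t = v_t + u_t$ where $v_t$ is the gradient-flow velocity, the term $\int\ip{\botmap{\pi_t}{\sigma}}{-v_t}$ is bounded above by $\cE(\sigma) - \cE(\pi_t) - \tfrac{\kappa}{2}W_2^2(\pi_t,\sigma)$ — this is exactly the integrated/infinitesimal form of the EVI characterization of the gradient flow, and it is precisely where the geodesic $\kappa$-convexity enters. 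The remaining term $\int\ip{\botmap{\pi_t}{\sigma}}{u_t}$ is carried along explicitly. Summing with the weights $a$ (for $\sigma=\rho$) and $\partial_i\varphi(\tfrac12 W_2^2(\pi_t,\bm\mu))$ (for $\sigma=\mu_i$), the $\cE$-terms reassemble into $g^\dagger_\cE(\pi_t)$ plus the square term in $g^\dagger_{W_2}$; the latter must be produced from the cross terms. Here I would use that $|\ip{\botmap{\pi_t}{\sigma}}{u_t}|$ combines across $\sigma$ with Cauchy–Schwarz / Young to absorb into $g^\dagger_{W_2}$, but more precisely the square term in $g^\dagger$ is exactly what one gets after completing the square, so tracking signs carefully should make the $u_t$-dependent cross terms collapse into the stated inner-product $\ip{u_t}{a\botmap{\pi_t}{\rho} + \nabla\varphi\cdot\botmap{\pi_t}{\bm\mu}}_{L^2(\pi_t)}$ on the right-hand side of \eqref{eq:refined}, with the pure-$W_2$ square already present in $g^\dagger_{W_2}$ being carried by the chain-rule expansion of $f^\dagger$ itself. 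Integrating in $t$ from $0$ to $T$ (using absolute continuity of $t\mapsto f^\dagger(\pi_t)$, which follows from $\mu_\cdot\in AC^2$ and smoothness of $\varphi$) yields \eqref{eq:refined}. The inequality \eqref{eq:refined2} for $(f^\ddagger,g^\ddagger)$ follows by the same argument with reversed signs: the concavity direction flips, so the EVI inequality is applied in the form that produces a lower bound, and the square term now appears with a minus sign exactly as in $g^\ddagger_{W_2}$.

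The main obstacle is making the first-variation formula rigorous: the derivative of $t\mapsto\tfrac12 W_2^2(\pi_t,\sigma)$ along an $AC^2$ curve with an $L^2$ velocity field is classical (see \cite[Theorem 8.4.7, Corollary 8.4.13]{AmGiSa08}) only under the regularity that guarantees uniqueness of the optimal map — here supplied by $\pi_t\in\cD(\cI)\subseteq\{\mu\ll\cL^d\}$ for a.e.\ $t$ and the finiteness of $\int_0^T(|\dot\pi_t|^2 + \|u_t\|^2_{L^2(\pi_t)})\dd t$. One must also check that the bad set of times where either $\pi_t\notin\cD(\cI)$ or the derivative formula fails has measure zero, which follows since $\int_0^T\cI(\pi_t)\dd t<\infty$ along admissible curves (a consequence one should extract, or assume from the construction of $\admt$). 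The secondary technical point is justifying the application of the EVI inequality pointwise in $t$ to the instantaneous velocity $v_t$: this is done by comparing $\pi_t$ with the true gradient flow issued from $\pi_t$ over an infinitesimal time and using the asymptotic expansion \eqref{eq:asymptotics}, or alternatively by the metric formulation of EVI which states that $-\tfrac12\frac{\dd}{\dd t}W_2^2(\pi_t,\sigma)$ (w.r.t.\ the gradient-flow part of the motion) is bounded by $\cE(\sigma)-\cE(\pi_t)-\tfrac{\kappa}{2}W_2^2(\pi_t,\sigma)$. Everything else — the chain rule through $\varphi$, reassembling $g^\dagger_\cE$, and completing the square to isolate the $u_t$ term — is bookkeeping.
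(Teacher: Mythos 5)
Your overall strategy --- differentiate $\tfrac12 W_2^2(\pi_t,\sigma)$ via the first-variation formula, chain through $\varphi$, and bound the gradient-flow part of the velocity by the subdifferential/EVI inequality --- is the same as the paper's. Two points, however, do not hold up.

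First, you expend effort on how ``the square term in $g^\dagger_{W_2}$'' should arise by ``completing the square'' or be ``carried by the chain-rule expansion of $f^\dagger$ itself''; neither happens, and neither is needed. The right-hand side of \eqref{eq:refined} contains only $g^\dagger_\cE$, and the $u_t$-dependent contribution is left as an explicit inner product. The quadratic $g^\dagger_{W_2}$ term only enters later, when Young's inequality is applied to that inner product in Lemma~\ref{lemma:control_controlled_paths} and Proposition~\ref{prop:subsolution}; trying to make it appear here is a false trail and the chain rule for $f^\dagger$ produces no such square. Second, and more substantively, the key step
\[
\int \ip{\botmap{\pi_t}{\sigma}}{\tfrac12\nabla U'(\pi_t)+\nabla V+\nabla W*\pi_t}\,\dd\pi_t \;\leq\; \cE(\sigma)-\cE(\pi_t)-\tfrac{\kappa}{2}W_2^2(\pi_t,\sigma)
\]
is only gestured at in your proposal. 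You suggest comparing $\pi_t$ with the gradient flow issued from $\pi_t$ and appealing to the asymptotic expansion \eqref{eq:asymptotics}, which already presupposes $\pi_t\in\cD(|\partial\cE|)$, or alternatively invoking a ``metric EVI'' for the gradient-flow part of a curve which is \emph{not} itself a gradient flow; in both cases you do not establish the hypotheses you need, and your fallback assumption $\int_0^T\cI(\pi_t)\,\dd t<\infty$ is not part of the definition of $\admt$. The paper's route is cleaner and closes this gap: $(\pi_\cdot,u_\cdot)\in\admt$ forces $v_t\in L^2(\pi_t)$ a.e.\ (from $AC^2$, Remark~\ref{rmk:continuity_equation}) and $u_t\in L^2(\pi_t)$ a.e.\ (from \eqref{eq:integrability-control}), hence $w_t:=u_t-v_t=\tfrac12\nabla U'(\pi_t)+\nabla V+\nabla W*\pi_t\in L^2(\pi_t)\subset L^1(\pi_t)$; this yields $\nabla P(\pi_t)\in L^1_{\loc}$, and then \cite[Theorem~10.4.13]{AmGiSa08} simultaneously delivers $\pi_t\in\cD(\cI)$ a.e.\ and identifies $w_t$ as the minimal element of the subdifferential of $\cE$ at $\pi_t$, after which \eqref{eq:subdifferential} is applied directly. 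You should replace your infinitesimal-gradient-flow heuristic with this subdifferential argument.
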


\begin{proof}
First of all, note that $\otmap{\pi_t}{\rho},\otmap{\pi_t}{\mu_i}$ exist for every $i=1,...,k$ and a.e.\ $t \in [0,T]$, because $\pi_t \ll \cL^d$ for a.e.\ $t \in [0,T]$ by definition of admissible curve. It is indeed implicitly understood that $\pi_t \ll \cL^d$ in \eqref{eq:controlled_GF}, where $P(\pi_t) = P(\frac{\dd\pi_t}{\dd\cL^d})$.

After this premise, observe that the $W_2$-absolute continuity of $\pi_t$ on $[0,T]$ (by definition of $\admt$) trivially implies that $t \mapsto W_2^2(\pi_t,\rho)$ and $t \mapsto W_2^2(\pi_t,\mu_i)$ are absolutely continuous on $[0,T]$ as well, for all $i=1,\dots,k$. Then, observe that the velocity field of $\pi_t$ is given by
\begin{equation}\label{eq:drift}
v_t = -\frac12\nabla U'(\pi_t) - \nabla V - \nabla W * \pi_t + u_t, \qquad \textrm{for a.e. } t \in [0,T],
\end{equation}
since by algebraic manipulations \eqref{eq:controlled_GF} can be rewritten as a continuity equation with the above $v_t$ as drift. Therefore, by \cite[Theorem 1.31]{gigli2012second}
\[
\frac{\dd}{\dd t}\left(\frac{1}{2}W_2^2(\pi_t,\mu_i)\right) = -\int \ip{v_t}{\botmap{\pi_t}{\mu_i}}\, \dd \pi_t = \int \ip{\frac{1}{2}\nabla U'(\pi_t) + \nabla V + \nabla W * \pi_t - u_t}{\botmap{\pi_t}{\mu_i}}\, \dd \pi_t
\]
for a.e.\ $t \in [0,T]$ and for all $i=1,...,k$, moreover the same apply for $\rho$ in place of $\mu_i$. Since $\varphi \in \cT$, we have that $t \mapsto \varphi(\frac{1}{2}W_2^2(\pi_t,\mu_i))$ and $t \mapsto \varphi(\frac{1}{2}W_2^2(\pi_t,\rho))$ are absolutely continuous too for all $i=1,\dots,k$, and by the chain rule  we have 
\begin{equation}\label{eq:applying-gigli}
\begin{split}
f^\dagger(& \pi_T) - f^\dagger(\pi_0) \\
& = \sum_{i=1}^k \int_0^T \partial_i\varphi \bigg(\frac{1}{2} W_2^2(\pi_t,\bm\mu) \bigg)\frac{\dd}{\dd t} \left(\frac{1}{2} W_2^2(\pi_t,\mu_i)\right)\dd t + a\int_0^T \frac{\dd}{\dd t} \left(\frac{1}{2} W_2^2(\pi_t,\rho)\right)\dd t \\
& = \sum_{i=1}^k \int_0^T \partial_i\varphi \bigg(\frac{1}{2} W_2^2(\pi_t,\bm\mu)\bigg)\int \ip{\frac{1}{2}\nabla U'(\pi_t) + \nabla V + \nabla W * \pi_t - u_t}{\botmap{\pi_t}{\mu_i}}\, \dd \pi_t\dd t \\
& \qquad + a\int_0^T \int \ip{\frac{1}{2}\nabla U'(\pi_t) + \nabla V + \nabla W * \pi_t - u_t}{\botmap{\pi_t}{\rho}}\, \dd \pi_t\dd t.
\end{split}
\end{equation}
Let us now observe that $\pi_t \in \cD(\cI)$ for a.e.\ $t \in [0,T]$. Indeed, the fact that $\pi_\cdot \in AC^2([0,T],\cP_2(\R^d))$ implies that $v_t \in L^2(\pi_t)$ for a.e.\ $t \in [0,T]$ by Remark \ref{rmk:continuity_equation}, and \eqref{eq:integrability-control} gives $u_t \in L^2(\pi_t)$ for a.e.\ $t \in [0,T]$. Therefore if we recall that $P(r) = rU'(r)-U(r)$ we deduce that
\[
\begin{split}
w_t := u_t - v_t & = \frac{1}{2}\nabla U'(\pi_t) + \nabla V + \nabla W * \pi_t \\
& = \frac{1}{2}\frac{\nabla P(\pi_t)}{\pi_t} + \nabla V + \nabla W * \pi_t \in L^2(\pi_t), \qquad \textrm{for a.e. } t \in [0,T]
\end{split}
\]
too. Since $L^2(\pi_t) \subset L^1(\pi_t)$, this yields
\[
\frac{1}{2}\nabla P(\pi_t) + \pi_t\nabla V + \pi_t(\nabla W * \pi_t) \in L^1(\R^d,\cL^d).
\]
Noting that $\nabla V$ and $\nabla W * \pi_t$ are locally bounded (since $V$ and $W$ are respectively $\kappa_V$- and $\kappa_W$-convex), we deduce that $\nabla P(\pi_t) \in L^1_\loc(\R^d)$. We are thus in position to apply \cite[Theorem 10.4.13]{AmGiSa08} (remarking that $P$ here coincides with $L_F$ there), which precisely grants that $\pi_t \in \cD(\cI)$ for a.e.\ $t \in [0,T]$.  Moreover, the same theorem ensures that $w_t$ belongs to the subdifferential of $\cE$ at $\pi_t$ (it is actually the element of minimal $L^2(\pi_t)$-norm). By \eqref{eq:subdifferential}, this means that
\begin{equation}\label{eq:W^2estimate}
\begin{split}
\frac{\dd}{\dd t}\left(\frac{1}{2}W_2^2(\pi_t,\rho)\right) & = \int \ip{\frac{1}{2}\nabla U'(\pi_t) + \nabla V + \nabla W * \pi_t}{\botmap{\pi_t}{\rho}}\, \dd \pi_t \\ &\leq \cE(\rho) - \cE(\pi_t) - \frac{\kappa}{2}W_2^2(\pi_t,\rho)
\end{split}
\end{equation}
and analogous estimates hold for $\mu_1,\dots\mu_k$ in place of $\rho$. Being $\partial_i \varphi >0$ for all $i=1\dots, k$, we can plug the above inequalities  into \eqref{eq:applying-gigli} providing us the desired conclusion.
\end{proof}

As a consequence of Proposition \ref{prop:modif_EVI}, we now obtain some quantitative estimates for controlled gradient flows that will be extensively used in the following.

\begin{lemma} \label{lemma:control_controlled_paths}
Let $(\mu_t,u_t) \in \admt$, for some $T>0$ fixed. Then for every $\alpha \in \R$ we have that for a.e.\ $t \in [0,T]$ it holds
\begin{equation}\label{eq:first-bound-derivative}
\frac{\dd}{\dd t} \left( \frac{1}{2} e^{\alpha t} W^2_2(\mu_t, \rho) \right) \leq e^{\alpha t} \left(\cE(\rho) - \cE(\mu_t) + \frac{(\alpha + 1 - \kappa)}{2} W_2^2(\mu_t,\rho) + \frac{1}{2}\|u_t\|^2_{L^2(\mu_t)} \right).
\end{equation}
In particular, if $\alpha \leq 3(\kappa - 1)$, there exists a non-negative constant
\begin{equation} \label{eqn:definitionMrhonu}
    M_{\rho,\nu} := \max\left\{\cE(\rho) + c_1 W_2^2(\rho,\nu) + c_2,0\right\}
\end{equation}
such that:
\begin{enumerate}[(a)]
\item \label{item:lemma:control_controlled_paths_differentialcontrol} The a.e.\ derivative of $t \mapsto \frac{1}{2} e^{\alpha t} W^2_2(\mu_t, \rho)$ can be controlled as:
\[
\frac{\dd}{\dd t} \left( \frac{1}{2} e^{\alpha t} W^2_2(\mu_t, \rho) \right) \leq e^{\alpha t} \left(M_{\rho,\nu} + \frac{1}{2}\vn{u_t}^2_{L^2(\mu_t)} \right).
\]
\item \label{item:lemma:control_controlled_paths_metriccontrol} The growth of the metric can be controlled, for all $t \in [0,T]$, as:
\[
\frac{1}{2} e^{\alpha t} W_2^2(\mu_t,\rho) \leq \frac{1}{2} W_2^2(\mu_0,\rho) + M_{\rho,\nu}\frac{e^{\alpha t}-1}{\alpha} + \frac12 \int_0^t e^{\alpha s}\vn{u_s}^2_{L^2(\mu_s)} \dd s.
\]
In particular,
\begin{equation}\label{eq:uniform-control-distance}
\begin{split}
\frac12 e^{-\alpha^- T}\sup_{t \in [0,T]} W_2^2(\mu_t,\rho) & \leq \frac12 W_2^2(\mu_0,\rho) + M_{\rho,\nu}\frac{e^{\alpha T}-1}{\alpha} \\
& \qquad + \frac12 e^{\alpha^+ T}\int_0^T \vn{u_s}^2_{L^2(\mu_t)} \dd t,
\end{split}
\end{equation}
where $\alpha^\pm := \max\{\pm\alpha,0\}$ and $\alpha^{-1}(e^{\alpha T}-1)$ has to be understood as $T$ if $\alpha=0$.
\end{enumerate}
\end{lemma}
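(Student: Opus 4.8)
The plan is to derive everything from the pointwise-in-time inequality \eqref{eq:first-bound-derivative}, which is a short by-product of the computations already made for Proposition \ref{prop:modif_EVI}, and then to integrate. To prove \eqref{eq:first-bound-derivative}, fix $\rho$. As observed in the proof of Proposition \ref{prop:modif_EVI}, for a.e.\ $t$ one has $\mu_t \ll \cL^d$ and $\mu_t \in \cD(\cI)$, the velocity field of $(\mu_t)$ is $v_t$ as in \eqref{eq:drift}, and $w_t := u_t - v_t$ lies in the subdifferential of $\cE$ at $\mu_t$. By \cite[Theorem 1.31]{gigli2012second} the map $t \mapsto \tfrac12 W_2^2(\mu_t,\rho)$ is absolutely continuous on $[0,T]$ and, using $-v_t = w_t - u_t$,
\[
\frac{\dd}{\dd t}\Big(\tfrac12 W_2^2(\mu_t,\rho)\Big) = \int \ip{w_t}{\botmap{\mu_t}{\rho}}\,\dd\mu_t - \int \ip{u_t}{\botmap{\mu_t}{\rho}}\,\dd\mu_t \qquad \text{for a.e. } t .
\]
The first term is $\le \cE(\rho) - \cE(\mu_t) - \tfrac\kappa2 W_2^2(\mu_t,\rho)$ by the subdifferential inequality \eqref{eq:subdifferential}, while Cauchy--Schwarz together with $\|\botmap{\mu_t}{\rho}\|_{L^2(\mu_t)} = W_2(\mu_t,\rho)$ and Young's inequality bound the second term by $\tfrac12 W_2^2(\mu_t,\rho) + \tfrac12\|u_t\|^2_{L^2(\mu_t)}$. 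Since $t \mapsto \tfrac12 e^{\alpha t}W_2^2(\mu_t,\rho)$ is then absolutely continuous with a.e.\ derivative $e^{\alpha t}\big[\tfrac\alpha2 W_2^2(\mu_t,\rho) + \tfrac{\dd}{\dd t}(\tfrac12 W_2^2(\mu_t,\rho))\big]$, combining the two estimates yields \eqref{eq:first-bound-derivative}.

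For part~(a) I would use that $\cE$ is only quadratically unbounded below. By \eqref{eq:quadratic-lower-bound}, $-\cE(\mu_t) \le \tfrac{c_1}{2}W_2^2(\mu_t,\nu) + c_2$, and by the triangle inequality $W_2^2(\mu_t,\nu) \le 2W_2^2(\mu_t,\rho) + 2W_2^2(\rho,\nu)$; substituting these into \eqref{eq:first-bound-derivative} bounds the term in brackets on its right-hand side by
\[
\cE(\rho) + c_1 W_2^2(\rho,\nu) + c_2 + \Big(c_1 + \tfrac{\alpha+1-\kappa}{2}\Big)W_2^2(\mu_t,\rho) + \tfrac12\|u_t\|^2_{L^2(\mu_t)} .
\]
The key point is that the constraint $c_1 < -\kappa + 1$ built into \eqref{eq:quadratic-lower-bound} forces $2c_1 + \alpha + 1 - \kappa < \alpha - 3(\kappa - 1) \le 0$ under the hypothesis $\alpha \le 3(\kappa - 1)$, so the coefficient $c_1 + \tfrac{\alpha+1-\kappa}{2}$ is (strictly) negative and that $W_2^2(\mu_t,\rho)$-term may be dropped; since $\cE(\rho) + c_1 W_2^2(\rho,\nu) + c_2 \le M_{\rho,\nu}$ by definition of $M_{\rho,\nu}$, part~(a) follows. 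This constant bookkeeping --- and, in particular, the exact threshold $3(\kappa - 1)$ --- is really the only delicate point of the proof; the rest is mechanical.

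Finally, part~(b) is integration. The left-hand side of the inequality in part~(a) is absolutely continuous on $[0,T]$ and its right-hand side lies in $L^1(0,T)$ by \eqref{eq:integrability-control}, so integrating from $0$ to $t$ and using $\int_0^t e^{\alpha s}\,\dd s = (e^{\alpha t}-1)/\alpha$ (read as $t$ when $\alpha = 0$) gives the first displayed inequality of part~(b). For \eqref{eq:uniform-control-distance} I would multiply through by $e^{-\alpha t}$ and take the supremum over $t \in [0,T]$, using that $t \mapsto (e^{\alpha t}-1)/\alpha$ is non-decreasing (so, since $M_{\rho,\nu}\ge0$, the corresponding term is maximised at $t=T$), that $e^{\alpha s} \le e^{\alpha^+ T}$ for $s \in [0,T]$, and that $e^{-\alpha t} \le e^{\alpha^- T}$ for $t \in [0,T]$; collecting these elementary estimates gives exactly \eqref{eq:uniform-control-distance}.
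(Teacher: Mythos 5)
Your proposal is correct and follows essentially the same route as the paper's own proof: derive \eqref{eq:first-bound-derivative} by combining the Wasserstein derivative formula with the subdifferential inequality \eqref{eq:subdifferential} (as in Proposition \ref{prop:modif_EVI}) and Young's inequality, then pass from $-\cE(\mu_t)$ to the constant $M_{\rho,\nu}$ via \eqref{eq:quadratic-lower-bound} and the triangle inequality, with the threshold $\alpha \le 3(\kappa-1)$ ensuring the coefficient of $W_2^2(\mu_t,\rho)$ is nonpositive, and finally integrate. Your algebraic verification of the sign condition $2c_1 + \alpha + 1 - \kappa < 0$ matches the paper's computation, and the elementary bounds you list for \eqref{eq:uniform-control-distance} reproduce the intended estimate.
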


\begin{proof}
As $(\mu_t) \in AC^2([0,T],\cP_2(\R^d))$, the map $t \mapsto W^2_2(\mu_t,\rho)$ is absolutely continuous and a fortiori so is $t \mapsto \frac{1}{2} e^{\alpha t} W^2_2(\mu_t,\rho)$. This ensures the a.e.\ existence of its derivative. In order to bound it from above, using \eqref{eq:W^2estimate} in Proposition \ref{prop:modif_EVI}, applying Young's inequality to estimate $\ip{u_t}{\botmap{\mu_t}{\rho}}_{L^2(\mu_t)}$ and observing that $\|\botmap{\mu_t}{\rho}\|_{L^2(\mu_t)} = W_2(\mu_t,\rho)$ we find
\[
\begin{split}
\frac{\dd}{\dd t} & \left( \frac{1}{2} e^{\alpha t} W^2_2(\mu_t, \rho) \right) \\
& \leq e^{\alpha t} \left(\cE(\rho) - \cE(\mu_t) + \frac{(\alpha + 1 - \kappa)}{2} W^2(\mu_t,\rho) + \frac{1}{2}\vn{u_t}^2_{L^2(\mu_t)} \right) \\
& =e^{\alpha t} \left(\cE(\rho) - \overline{\cE}(\mu_t) + \frac{c_1}{2} W_2^2(\mu_t,\nu) + c_2 + \frac{(\alpha + 1 - \kappa)}{2} W^2_2(\mu_t,\rho) + \frac{1}{2}\vn{u_t}^2_{L^2(\mu_t)} \right),
\end{split}
\]
where on the second line we used \eqref{eq:quadratic-lower-bound}. Note also that the first inequality provides us with \eqref{eq:first-bound-derivative}. To proceed further, using $\overline{\cE} \geq 0$ and the triangular inequality combined with the elementary estimate $\frac{1}{2}(a+b)^2 \leq a^2 + b^2$ yields
\begin{multline*}
\frac{\dd}{\dd t} \left( \frac{1}{2} e^{\alpha t} W^2_2(\mu_t, \rho) \right) \\
\leq e^{\alpha t} \left(\cE(\rho) + c_1 W_2^2(\rho,\nu) + c_2 + \frac{\alpha + 1 + 2c_1 - \kappa}{2} W^2_2(\mu_t,\rho) + \frac{1}{2}\vn{u_t}^2_{L^2(\mu_t)} \right).
\end{multline*}
Now, recalling that $c_1 \in (-\kappa,-\kappa + 1)$ and assuming $\alpha \leq 3(\kappa - 1)$, so that $\alpha + 1 + 2c_1 - \kappa \leq 2(\kappa-1) + 2c_1 < 0$, the above inequality finally rewrites as
\[
\frac{\dd}{\dd t} \left( \frac{1}{2} e^{\alpha t} W^2_2(\mu_t, \rho) \right) \leq e^{\alpha t} \left(\cE(\rho) + c_1 W_2^2(\rho,\nu) + c_2 + \frac{1}{2}\vn{u_t}^2_{L^2(\mu_t)} \right).
\]
Claim \ref{item:lemma:control_controlled_paths_differentialcontrol} then follows by using the definition of $M_{\rho,\nu}$ of \eqref{eqn:definitionMrhonu}, while for claim \ref{item:lemma:control_controlled_paths_metriccontrol}  it is sufficient to integrate \ref{item:lemma:control_controlled_paths_differentialcontrol} on $[0,T]$.
\end{proof}

\subsection{Continuity of the transport map} \label{subsection:continuityTransportMap}
In this section we state a result on the continuity of the transport map that will be used for the proof of the supersolution property.

\begin{lemma}\label{lem:continuity_map}
Let $(\mu^n)_{n \in \bN} \subset \cP_2(\R^d)$ be weakly converging to some $\mu^0 \in \cP_2(\R^d)$ and let $\gamma \in \cP_2(\R^d)$ be such that $\gamma \ll \cL^d$. Assume moreover that $W_2(\mu^n,\gamma) \to W_2(\mu^0,\gamma)$ as $n \to \infty$. Then $\botmap{\gamma}{\mu^n}$ converges strongly to $\botmap{\gamma}{\mu^0}$ in $L^2(\gamma)$.
\end{lemma}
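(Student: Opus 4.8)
The plan is to prove Lemma~\ref{lem:continuity_map} by combining weak compactness of optimal plans with the characterization of optimality via cyclical monotonicity, and then upgrading weak $L^2$-convergence to strong convergence using the convergence of the Wasserstein distances. Throughout, since $\gamma \ll \cL^d$, the optimal transport map $\otmap{\gamma}{\mu^n}$ is well-defined for every $n$ (including $n=0$) by \cite[Theorem 6.2.4]{AmGiSa08}, and $\botmap{\gamma}{\mu^n} = \otmap{\gamma}{\mu^n} - \bm{id}$ is the corresponding displacement, with $\|\botmap{\gamma}{\mu^n}\|_{L^2(\gamma)}^2 = W_2^2(\mu^n,\gamma)$.

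First I would show that $(\botmap{\gamma}{\mu^n})_n$ is weakly relatively compact in $L^2(\gamma)$: by assumption $\|\botmap{\gamma}{\mu^n}\|_{L^2(\gamma)}^2 = W_2^2(\mu^n,\gamma) \to W_2^2(\mu^0,\gamma)$, so the sequence is bounded in $L^2(\gamma)$, hence admits a weakly convergent subsequence with some limit $\xi \in L^2(\gamma)$. The key step is to identify $\xi$ with $\botmap{\gamma}{\mu^0}$. For this, consider the plans $\pi^n := (\bm{id}, \otmap{\gamma}{\mu^n})_\# \gamma = (\bm{id}, \bm{id} + \botmap{\gamma}{\mu^n})_\# \gamma \in \Pi(\gamma,\mu^n)$, which are optimal. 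The second marginals converge weakly ($\mu^n \to \mu^0$), the first marginal is fixed, and the sequence has uniformly bounded second moments (from the distance convergence), so $(\pi^n)$ is tight; along a further subsequence $\pi^n \to \pi$ narrowly for some $\pi \in \Pi(\gamma,\mu^0)$. By lower semicontinuity of the transport cost and stability of optimality under narrow convergence (e.g.\ \cite[Proposition 7.1.3]{AmGiSa08} or standard $\Gamma$-convergence of optimal transport), $\pi$ is an optimal plan between $\gamma$ and $\mu^0$; since $\gamma \ll \cL^d$, this optimal plan is unique and induced by the map $\otmap{\gamma}{\mu^0}$, i.e.\ $\pi = (\bm{id}, \otmap{\gamma}{\mu^0})_\#\gamma$. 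On the other hand, testing the narrow convergence $\pi^n \to \pi$ against functions of the form $(x,y) \mapsto \phi(x)\cdot y$ with $\phi \in C_c(\R^d;\R^d)$ shows that $y$-first-moments converge, which combined with the weak $L^2(\gamma)$-convergence $\botmap{\gamma}{\mu^n} \rightharpoonup \xi$ forces $\xi = \otmap{\gamma}{\mu^0} - \bm{id} = \botmap{\gamma}{\mu^0}$ $\gamma$-a.e.; a truncation/uniform integrability argument (again using the bounded second moments) handles the fact that $\bm{id}$ and $\botmap{\gamma}{\mu^n}$ are not compactly supported. Since the limit is the same along every subsequence, the whole sequence converges weakly: $\botmap{\gamma}{\mu^n} \rightharpoonup \botmap{\gamma}{\mu^0}$ in $L^2(\gamma)$.

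Finally I would upgrade to strong convergence. In a Hilbert space, weak convergence together with convergence of norms implies strong convergence. Here $\botmap{\gamma}{\mu^n} \rightharpoonup \botmap{\gamma}{\mu^0}$ weakly in $L^2(\gamma)$, and
\[
\|\botmap{\gamma}{\mu^n}\|_{L^2(\gamma)}^2 = W_2^2(\mu^n,\gamma) \longrightarrow W_2^2(\mu^0,\gamma) = \|\botmap{\gamma}{\mu^0}\|_{L^2(\gamma)}^2,
\]
so $\|\botmap{\gamma}{\mu^n} - \botmap{\gamma}{\mu^0}\|_{L^2(\gamma)}^2 = \|\botmap{\gamma}{\mu^n}\|_{L^2(\gamma)}^2 - 2\ip{\botmap{\gamma}{\mu^n}}{\botmap{\gamma}{\mu^0}}_{L^2(\gamma)} + \|\botmap{\gamma}{\mu^0}\|_{L^2(\gamma)}^2 \to 0$, which is the claim.

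I expect the main obstacle to be the rigorous identification of the weak $L^2$-limit $\xi$ with $\botmap{\gamma}{\mu^0}$: this requires carefully passing between narrow convergence of plans (where test functions must be bounded/continuous) and weak $L^2(\gamma)$-convergence of the displacement maps (which involves the unbounded functions $x$ and $y$), so the uniform second-moment bound coming from $W_2(\mu^n,\gamma) \to W_2(\mu^0,\gamma)$ must be used to justify a truncation argument, and one must invoke uniqueness of the optimal plan from $\gamma \ll \cL^d$ to pin down $\pi$. Everything else is soft functional analysis.
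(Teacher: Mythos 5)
Your proof is correct but takes a somewhat different route from the paper in the key step of identifying the limit. The paper invokes \cite[Corollary 5.23]{Vi09}, which directly yields convergence \emph{in probability} of the transport maps $\botmap{\gamma}{\mu^n} \to \botmap{\gamma}{\mu^0}$; it then extracts a weak $L^2(\gamma)$-subsequential limit from the norm bound and uses the convergence in probability (truncated against bounded test vectors $v \in L^2 \cap L^\infty$) to identify that weak limit with $\botmap{\gamma}{\mu^0}$, before applying the Hilbert-space upgrade from weak-plus-norm convergence to strong convergence. You instead re-derive the stability of the maps from more elementary ingredients: tightness of the optimal plans $(\bm{id},\otmap{\gamma}{\mu^n})_\#\gamma$, narrow lower semicontinuity of the transport cost together with $W_2(\mu^n,\gamma)\to W_2(\mu^0,\gamma)$ to guarantee optimality of the limit plan, and uniqueness of the optimal plan from $\gamma\ll\cL^d$ to pin it down as $(\bm{id},\otmap{\gamma}{\mu^0})_\#\gamma$; you then pass to the displacement maps via a truncation/uniform-integrability argument justified by the uniform second-moment bound. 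Both approaches converge on the same final step. The paper's version is shorter because it outsources the stability to a black-box theorem; yours is more self-contained but essentially re-proves the content of that theorem in the weak $L^2$ topology. One small caveat in your write-up: to pass the information $\pi^n \to \pi$ narrowly to the statement about $y$-moments, you need to verify uniform integrability of $|y|$ under $\pi^n$, which follows from $\int |y|^2\,\dd\pi^n = \int |y|^2\,\dd\mu^n$ being bounded (a consequence of the boundedness of $W_2(\mu^n,\gamma)$); you flag this but it is worth stating explicitly, since it is exactly the point at which the unbounded test function $(x,y)\mapsto\phi(x)\cdot y$ is not covered by narrow convergence alone.
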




\begin{proof}
From \cite[Corollary 5.23]{Vi09} we have that $\botmap{\gamma}{\mu^n}$ converges to $\botmap{\gamma}{\mu^0}$ in probability, i.e.\ for all $\eps>0$ we have 
\[
\lim_{n \to \infty}\gamma\big( |\botmap{\gamma}{\mu^n} - \botmap{\gamma}{\mu^0}|\geq \varepsilon\big) = 0.
\]
Moreover, recalling that $W_2(\mu^n,\gamma) = \|\botmap{\gamma}{\mu^n}\|_{L^2(\gamma)}$ and that $W_2(\mu^n,\gamma) \to W_2(\mu^0,\gamma)$, we have 
\begin{equation}\label{eq:conv_of_maps_1}
\lim_{n \to \infty} \|\botmap{\gamma}{\mu^n}\|_{L^2(\gamma)} = \|\botmap{\gamma}{\mu^0}\|_{L^2(\gamma)}.
\end{equation}
The conclusion follows from the fact that convergence in probability plus convergence of the norms implies convergence in the strong $L^2$-sense. Indeed, because of \eqref{eq:conv_of_maps_1} we know that $(\botmap{\gamma}{\mu^n})_{n \in \bN}$ is a bounded sequence in $L^2(\gamma)$ and therefore, up to extracting a subsequence, it converges weakly in $L^2(\gamma)$ to some $\tilde{\bm{t}}$. We proceed to show that $\tilde{\bm{t}} = \botmap{\gamma}{\mu^0}$, so that the choice of the subsequence plays no role. To this end, take any $v \in L^2 \cap L^\infty(\R^d)$ and, using the weak $L^2(\gamma)$-convergence, note that
\[
\lim_{n \to \infty} \int \ip{\botmap{\gamma}{\mu^n}}{v}\, \dd\gamma = \int \ip{\tilde{\bm{t}}}{v}\, \dd\gamma.
\]
On the other hand, fix $\eps>0$. We have
\[
\begin{split}
\Big|\int \ip{\botmap{\gamma}{\mu^n} & - \botmap{\gamma}{\mu^0}}{v} \dd\gamma\Big| \leq \|v\|_{\infty} \int |\botmap{\gamma}{\mu^n} - \botmap{\gamma}{\mu^0}| \mathbf{1}_{\{ |\botmap{\gamma}{\mu^n} - \botmap{\gamma}{\mu^0}|\geq \eps \}} \dd\gamma + \eps \|v\|_{\infty}\\
& \leq \|v\|_{\infty} \sqrt{\int |\botmap{\gamma}{\mu^n} - \botmap{\gamma}{\mu^0}|^2\, \dd\gamma}\,\sqrt{\gamma\big( |\botmap{\gamma}{\mu^n} - \botmap{\gamma}{\mu^0}| \geq \eps\big) } + \eps \|v\|_{\infty}\\
& \leq \|v\|_{\infty} \sqrt{2\int \Big( |\botmap{\gamma}{\mu^n}|^2 + |\botmap{\gamma}{\mu^0}|^2\Big) \dd\gamma}\,\sqrt{\gamma\big( |\botmap{\gamma}{\mu^n} - \botmap{\gamma}{\mu^0}|\geq \eps\big) } + \eps \|v\|_{\infty}\\
& \leq \|v\|_{\infty}\sqrt{2}\left( \sup_{n \in \mathbb N}W_2(\mu^n,\gamma) + W_2(\mu^0,\gamma) \right) \sqrt{\gamma\big( |\botmap{\gamma}{\mu^n} - \botmap{\gamma}{\mu^0}|\geq \eps\big)} + \eps \|v\|_{\infty}.
\end{split}
\]
Using convergence in probability and the fact that $W_2(\mu^n,\gamma) \to W_2(\mu^0,\gamma)$ as $n \to \infty$ we obtain from the above
\[
\limsup_{n \to \infty} \Big|\int \ip{\botmap{\gamma}{\mu^n} - \botmap{\gamma}{\mu^0}}{v} \,\dd\gamma\Big| \leq \eps 
\|v\|_{\infty}.
\]
Since the choice of $\eps$ is arbitrary we deduce that 
\[
\lim_{n \to \infty} \int \ip{\botmap{\gamma}{\mu^n}}{v} \,\dd\gamma = \int \ip{\botmap{\gamma}{\mu^0}}{v} \,\dd\gamma
\]
and since the choice of $v$ was arbitrary too, by a standard density argument we obtain $\tilde{\bm{t}} = \botmap{\gamma}{\mu^0}$. Therefore we can conclude that $\botmap{\gamma}{\mu^n}$ converges weakly in $L^2(\gamma)$ to $\botmap{\gamma}{\mu^0}$. Since the $L^2(\gamma)$ norms also converge, see \eqref{eq:conv_of_maps_1}, and $L^2(\gamma)$ is a Hilbert space, this yields strong convergence in $L^2(\gamma)$.
\end{proof}

\section{The subsolution property}\label{sec:subsolution} 

In this section we work towards the proof that $\Phi^*$, the upper semicontinuous relaxation of $\Phi$ w.r.t.\ the $W_p$-topology for $p<2$, is a viscosity subsolution for 
\begin{equation}\label{eqn:HJ_subsol_strategy}
f - \lambda H_\dagger f = h
\end{equation}
for a bounded $W_p$-continuous function $h$. As before, without loss of generality, the proof will be carried out for the case $\lambda = 1$.

To indicate the strategy of the proof, note that the definition of viscosity subsolutions in Definition \ref{definition:viscosity_solutions_HJ_sequences} can be simplified in the context where optimizers exist: $\Phi^*$ is a subsolution for \eqref{eqn:HJ_subsol_strategy} if for any $(f^\dagger,g^\dagger) \in H_\dagger$ there exists an optimizer $\pi^0$ such that
\begin{gather}
\Phi^*(\pi^0) - f^\dagger(\pi^0) = \sup\{\Phi^* - f^\dagger\},  \label{eqn:visc_subsol_strategy1} \\
\Phi^*(\pi^0) - g^\dagger(\pi^0) \leq h(\pi^0). \label{eqn:visc_subsol_strategy2}
\end{gather}
In the proof below, we will start with a sequence of optimizers $\pi_0^n$ for
\begin{equation} \label{eqn:visc_subsol_strategy3}
\lim_{n \to \infty} \Phi(\pi^n_0) - f^\dagger(\pi^n_0) = \sup\{\Phi - f^\dagger\}
\end{equation}
and we aim to show that this sequence has a limit $\pi^0$ for which \eqref{eqn:visc_subsol_strategy1} and \eqref{eqn:visc_subsol_strategy2} hold. To establish the latter, we employ the classical strategy of following curves that are close to the optimal control for an infinitesimal amount of time.

As our test functions include the energy functional $\cE$, this latter step forces us to control the behaviour of $\cE$ along our controlled curves. To do so, we replace the $\pi^n_0$ by $\pi^n$ that are obtained from the almost optimally controlled curves started from our original points $\pi^n_0$. As the controlled curves to large extent follow the gradient flow, this gives a control on $\cE$ in the chosen $\pi^n$.

\smallskip

In the following lemma, we start out by analyzing an almost optimally controlled curve $(\pi_{\cdot},u_{\cdot})$ started from an almost optimizer of $\Phi - f^\dagger$. We control the cost of the control on small time intervals and find a small time $t_0$ for which we control also $\cE(\pi_{t_0})$.

Afterwards, in Proposition \ref{proposition:existence_regularity_minimizer_subsol}, we show that starting from measures ever closer optimizing  $\Phi - f^\dagger$ with an almost optimally controlled curve, we can find a sequence $\pi^n$ that can be shown to have a weak limit point $\pi^0$. Finally in the proof of Proposition \ref{prop:subsolution} we show that $\pi^0$ satisfies \eqref{eqn:visc_subsol_strategy2}.

\begin{lemma} \label{lemma:raw_estimate_Phifdagger}
Fix $0<\eps<1$ and a time horizon $T<T_0$, with $T_0$ small enough. Let 
\[
f^{\dagger}(\pi) := \frac{a}{2} W_2^2(\pi,\rho) +\varphi\left(\frac{1}{2}W^2_2(\pi,\bm\mu)\right)
\]
be an admissible test function and $(\pi_{\cdot},u_{\cdot})$ be an admissible curve such that 
\begin{align}
& \Phi(\pi_0) - f^\dagger(\pi_0) \geq \sup \{\Phi - f^\dagger\} - \eps \label{eqn:raw_estimate_Phifdagger_optimizer}  \\
& \Phi(\pi_0) \leq \cA_T(\pi_{\cdot},u_{\cdot})+ e^{-T} \Phi(\pi_T) + \eps \label{eqn:raw_estimate_Phifdagger_almostDPP} 
\end{align}
where $\cA_T$ is defined at \eqref{eq:action_T}.

Then there exist:
\begin{enumerate}[(i)]
\item $t_0 \in [0,T]$,
\item a constant {$R> 0$} depending on $T_0$, $\kappa$, $\vn{h}_{\infty}$, $\cE(\rho)$, $W_2(\rho,\bm\mu)$, $W_2(\rho,\nu)$, $c_1$, and $c_2$ (where $\nu$, $c_1$, and $c_2$ have been fixed in \eqref{eq:quadratic-lower-bound}),
\item a constant {$M\geq 0$} depending on the same parameters of $R$ and also on {$a, \cE(\bm\mu)$ and } 
\[
S := \sup_{[0,+\infty)^k \cap B_{R'}}|\nabla\varphi|, \qquad \textrm{where } R' := R(1+W_2^2(\pi_0,\rho)),
\]
\end{enumerate}
such that
\[
a\overline{\cE}(\pi_{t_0}) + \frac{1}{2T} \int_0^T\Big(e^{-t} - \frac12\Big)  \vn{u_t}^2_{L^2(\pi_t)} \dd t \leq M \Big(1+W_2^2(\pi_0,\rho) + \frac{\eps}{T}\Big).
\]

\end{lemma}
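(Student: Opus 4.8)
The plan is to combine the dynamic programming hypothesis \eqref{eqn:raw_estimate_Phifdagger_almostDPP} with the optimality hypothesis \eqref{eqn:raw_estimate_Phifdagger_optimizer} in order to trade a bound on the action $\cA_T(\pi_\cdot,u_\cdot)$ for a bound on $f^\dagger(\pi_T)-f^\dagger(\pi_0)$, and then to use the controlled EVI of Proposition \ref{prop:modif_EVI} (equivalently, the quantitative estimates of Lemma \ref{lemma:control_controlled_paths}) to turn the latter into pointwise control of $\overline\cE(\pi_t)$ and of the control cost $\int_0^T\|u_t\|^2_{L^2(\pi_t)}\dd t$. First I would rewrite \eqref{eqn:raw_estimate_Phifdagger_almostDPP} as
\[
\Phi(\pi_0)-\Phi(\pi_T) \le \cA_T(\pi_\cdot,u_\cdot) - (1-e^{-T})\Phi(\pi_T) + \eps,
\]
bound $|\Phi|\le\|h\|_\infty$ (Corollary \ref{corollary:Phi_usc_admissiblecurves}), expand $\cA_T = \int_0^T e^{-t}(-\tfrac12\|u_t\|^2_{L^2(\pi_t)}+h(\pi_t))\dd t$, and use \eqref{eqn:raw_estimate_Phifdagger_optimizer} in the form $\Phi(\pi_0)-\Phi(\pi_T)\ge f^\dagger(\pi_0)-f^\dagger(\pi_T)-\eps$ to get
\[
f^\dagger(\pi_0) - f^\dagger(\pi_T) + \frac12\int_0^T e^{-t}\|u_t\|^2_{L^2(\pi_t)}\dd t \le C(T\|h\|_\infty + \eps)
\]
for a universal constant $C$, after absorbing the $(1-e^{-T})$ and $\int_0^T e^{-t}h\,\dd t$ terms into $O(T\|h\|_\infty)$.

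Next I would lower bound $f^\dagger(\pi_0)-f^\dagger(\pi_T)$ from below using the controlled EVI. Applying \eqref{eq:refined} with the roles of the endpoints reversed (i.e. running from $T$ back to $0$ is not directly available, so instead I use Lemma \ref{lemma:control_controlled_paths}\eqref{item:lemma:control_controlled_paths_metriccontrol} with $\alpha=0$, or better with $\alpha\le 3(\kappa-1)$) to control $W_2^2(\pi_T,\rho)$ and $W_2^2(\pi_T,\mu_i)$ in terms of $W_2^2(\pi_0,\rho)$, $M_{\rho,\nu}$, $T$, and $\int_0^T\|u_t\|^2_{L^2(\pi_t)}\dd t$. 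Since $\varphi$ has positive partials and is smooth, $\varphi(\tfrac12 W_2^2(\pi_T,\bm\mu)) \ge \varphi(\tfrac12 W_2^2(\pi_0,\bm\mu)) - S\cdot|\text{increments of }\tfrac12 W_2^2|$, with $S$ the sup of $|\nabla\varphi|$ over the relevant ball $B_{R'}$ — this is exactly where the constant $R' = R(1+W_2^2(\pi_0,\rho))$ and $S$ enter. Feeding this back, I obtain
\[
\frac12\int_0^T e^{-t}\|u_t\|^2_{L^2(\pi_t)}\dd t \le C'(1+S)\Big(1+W_2^2(\pi_0,\rho)\Big)\big(T + \text{small}\big) + C(T\|h\|_\infty+\eps) + \text{(positive multiple of }\textstyle\int_0^T\|u_t\|^2\dd t\text{)},
\]
and here one must be careful: the term $\int_0^T\|u_t\|^2$ coming from the metric estimate appears with a factor like $a+S$ times $e^{\alpha^+T}$, so I would choose $T_0$ small enough that $e^{-t}-\tfrac12 \ge \tfrac14$ on $[0,T]$ and that this coefficient times $T_0$ is $\le \tfrac18$, allowing me to absorb $\int_0^T\|u_t\|^2$ on the left. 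That yields the $\int_0^T(e^{-t}-\tfrac12)\|u_t\|^2\dd t$ piece of the claimed bound (the $e^{-t}-\tfrac12$ being exactly the leftover of $\tfrac12 e^{-t} - (\text{absorbed coefficient})$).

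Finally, for the $a\overline\cE(\pi_{t_0})$ term I would use the mean-value / averaging trick: integrate the differential inequality \eqref{eq:first-bound-derivative} (or Lemma \ref{lemma:control_controlled_paths}\eqref{item:lemma:control_controlled_paths_differentialcontrol}) over $[0,T]$ to see that $\tfrac1T\int_0^T \overline\cE(\pi_t)\dd t$ is controlled by $\tfrac1T\big(\tfrac12 W_2^2(\pi_0,\rho)\big) + M_{\rho,\nu} + \tfrac1T\int_0^T\|u_t\|^2$; since $\overline\cE \ge 0$ and $\cE = \overline\cE - \tfrac{c_1}2 W_2^2(\cdot,\nu) - c_2$ is bounded below, there exists $t_0\in[0,T]$ with $\overline\cE(\pi_{t_0}) \le \tfrac1T\int_0^T\overline\cE(\pi_t)\dd t + (\text{const})$, which combined with the control-cost bound already obtained gives the stated inequality after collecting all constants into $M$ (depending on $T_0,\kappa,\|h\|_\infty,\cE(\rho),W_2(\rho,\bm\mu),W_2(\rho,\nu),c_1,c_2,a,\cE(\bm\mu),S$ as advertised). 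The main obstacle I anticipate is the bookkeeping in step two: one has to simultaneously produce an upper bound on $f^\dagger(\pi_0)-f^\dagger(\pi_T)$ that is \emph{not} better than $-O(W_2^2(\pi_0,\rho))$ and whose $\int_0^T\|u_t\|^2$-dependence has a coefficient small enough (once $T_0$ is shrunk) to be absorbed on the left, all while keeping $S$ evaluated only on a ball whose radius is itself controlled by $W_2^2(\pi_0,\rho)$ via Lemma \ref{lemma:control_controlled_paths} — a circularity that must be broken by first deriving a crude a priori bound $\int_0^T\|u_t\|^2_{L^2(\pi_t)}\dd t \le R(1+W_2^2(\pi_0,\rho))$ (using only $e^{-T}\ge e^{-T_0}$ to keep $e^{-t}$ away from $0$ and $|\Phi|\le\|h\|_\infty$), which pins down $R'$ and hence $S$, and only then running the sharper argument.
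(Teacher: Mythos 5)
Your high-level plan is aligned with the paper's: first derive a crude a priori bound on $\sup_t W_2(\pi_t,\rho)$ and $\int\|u_t\|^2$ to pin down $R'$ and $S$, then combine \eqref{eqn:raw_estimate_Phifdagger_optimizer} and \eqref{eqn:raw_estimate_Phifdagger_almostDPP} into a two-sided bound on $\tfrac1T\bigl(f^\dagger(\pi_T)-f^\dagger(\pi_0)\bigr)$, extracting $\int\|u_t\|^2$-control and $\overline\cE$-control from the controlled EVI. However, the way you then split the two extractions introduces two fatal bookkeeping errors.

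First, to absorb $\int_0^T\|u_t\|^2\dd t$ you estimate $f^\dagger(\pi_T)-f^\dagger(\pi_0)$ through the metric growth in Lemma \ref{lemma:control_controlled_paths}\ref{item:lemma:control_controlled_paths_metriccontrol} plus the Lipschitz bound $S$ on $\varphi$. But \ref{item:lemma:control_controlled_paths_metriccontrol} gives $W_2^2(\pi_T,\eta)-W_2^2(\pi_0,\eta) \lesssim M_{\eta,\nu}\,T + \int_0^T\|u_t\|^2\dd t$ with the $\int\|u_t\|^2$ term carrying a coefficient of order $1$ (roughly $e^{|\alpha|T}$), \emph{not} of order $T$. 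After multiplying by $(a+kS)/2$ the residual $\int\|u_t\|^2$ coefficient is $\approx a+kS$, which cannot be made $\le \tfrac18$ by shrinking $T_0$ (and $T_0$ must anyway be chosen independently of $S$, which depends on $\pi_0$ through $R'$). So the absorption fails for $a$ or $S$ large. The paper avoids this by applying Young's inequality to the pairing $\langle u_t, a\botmap{\pi_t}{\rho}+\nabla\varphi\cdot\botmap{\pi_t}{\bm\mu}\rangle$ \emph{inside} the controlled EVI (\eqref{eq:refined}), which yields the universal coefficient $\tfrac{1}{4T}\int\|u_t\|^2$, the $(a+kS)^2$ factor landing on $\tfrac1T\int W_2^2(\pi_t,\rho)$ — and the latter is already controlled by the a priori bound.

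Second, your mean-value trick for $a\overline\cE(\pi_{t_0})$ — integrating \eqref{eq:first-bound-derivative} over $[0,T]$ and averaging — produces a boundary term $\tfrac12 W_2^2(\pi_0,\rho)$ from the left side (the $\tfrac{\dd}{\dd t}\tfrac12 e^{\alpha t}W_2^2$ integrates to a difference and only $W_2^2(\pi_T,\rho)\ge 0$ can be discarded), so one is left with $\tfrac1T\int_0^T\overline\cE(\pi_t)\dd t \le \tfrac{W_2^2(\pi_0,\rho)}{2T} + \dots$. The factor $W_2^2(\pi_0,\rho)/T$ does not fit the claimed shape $M(1+W_2^2(\pi_0,\rho)+\eps/T)$ since $M$ must be independent of $T<T_0$. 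The paper never sees this term because $\overline\cE(\pi_{t_0})$ is extracted not from integrating a $W_2^2$-derivative but from $g^\dagger_\cE$, i.e.\ from the \emph{same} upper bound for $\tfrac1T\bigl(f^\dagger(\pi_T)-f^\dagger(\pi_0)\bigr)$ that also gives the $\tfrac{1}{4T}\int\|u_t\|^2$ term; $f^\dagger$ is kept as a difference and paired, via DPP and optimality, against a lower bound that carries $\tfrac{1}{2T}\int e^{-t}\|u_t\|^2$ but no $W_2^2/T$. The $\overline\cE$-term and the $\|u\|^2$-absorption thus come out of a single sandwich inequality and cannot be cleanly separated the way you propose.
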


\begin{proof}
As a first step, we prove that the curve $t \mapsto \pi_t$ stays within a ball centered at $\rho$ whose radius can be controlled only in terms of structural constants (i.e.\ those listed at point (ii) of the statement) and on $W_2(\pi_0,\rho)$. To this end, we start observing that by \eqref{eqn:raw_estimate_Phifdagger_almostDPP}
\[
\Phi(\pi_0) \leq -\frac12 \int_0^T e^{-t} \|u_t\|^2_{L^2(\pi_t)} \dd t + e^{-T} \|h\|_\infty + e^{-T}\Phi(\pi_T) + \eps,
\]
so that, if we recall that $|\Phi| \leq \|h\|_\infty$ by Corollary \ref{corollary:Phi_usc_admissiblecurves}, the previous inequality turns into
\[
\frac12 e^{-T} \int_0^T \|u_t\|^2_{L^2(\pi_t)} \dd t \leq \frac12 \int_0^T e^{-t} \|u_t\|^2_{L^2(\pi_t)} \dd t \leq 3\|h\|_\infty + \eps.
\]
Plugging this information into \eqref{eq:uniform-control-distance} yields
\[
\frac12 e^{-\alpha^- T}\sup_{t \in [0,T]} W_2^2(\pi_t,\rho) \leq \frac12 W_2^2(\pi_0,\rho) + M_{\rho,\nu}\frac{e^{\alpha T}-1}{\alpha} + e^{(\alpha^+ + 1) T}(3\|h\|_\infty + \eps),
\]
leading to

\begin{equation}\label{eq:serotonine}
\frac12 \sup_{t \in [0,T]} W_2^2(\pi_t,\rho) \leq R(1+W_2^2(\pi_0,\rho)).
\end{equation}

For later purpose, let us note that by triangle inequality (and up to choosing a larger $R$ which incorporates $W_2(\rho,\mu_i)$) we also have
\begin{equation}\label{eq:serotonine2}
\frac12 \sup_{t \in [0,T]} W_2^2(\pi_t,\mu_i) \leq R(1+W_2^2(\pi_0,\rho)), \qquad \forall i=1,\dots,k.
\end{equation}
After this premise, let us stress that in what follows, $M$ denotes a non-negative constant depending on $T_0$, $\kappa$, $\vn{h}_{\infty}$, $\cE(\rho)$, $\cE(\bm\mu)$, $W_2(\rho,\bm\mu)$, $W_2(\rho,\nu)$, $S$ only, whose numerical value may change from line to line. 

On the one hand, let us start observing that 
\begin{equation}\label{eq:raw_1}
\begin{split}
\frac{1}{T} \big(f^{\dagger}(\pi_T)-f^{\dagger}(\pi_0)\big) & \stackrel{\eqref{eqn:raw_estimate_Phifdagger_optimizer}}{\geq} \frac{1}{T}\big(\Phi(\pi_T)-\Phi(\pi_0)-\eps\big) \\
&\stackrel{\eqref{eqn:raw_estimate_Phifdagger_almostDPP}}{\geq} -\frac{1}{T}\cA_T(\pi_{\cdot},u_{\cdot}) + \frac{1}{T}(1-e^{-T})\Phi(\pi_T)-\frac{2\eps}{T}\\
&\geq -M +\frac{1}{2T}\int_{0}^{T}e^{-t}\vn{u_t}^2_{L^2(\mu_t)} \dd t-\frac{2\eps}{T}.
\end{split}
\end{equation}

On the other hand, applying Cauchy-Schwarz inequality to Proposition \ref{prop:modif_EVI} we deduce that 
\[
\begin{split}
\frac{1}{T} \big(f^{\dagger}(\pi_T)-f^{\dagger}(\pi_0)\big) & \leq \frac{1}{T}\int_{0}^{T} g_\cE^\dagger(\pi_t)\,\dd t + \frac{1}{T} \int_{0}^T a W_2(\pi_t,\rho) \vn{u_t}_{L^2(\mu_t)}\dd t \\
& \qquad + \frac{1}{T}\int_0^T |\nabla\varphi\left(\frac12 W_2^2(\pi_t,\bm\mu)\right)| \cdot W_2(\pi_t,\bm\mu) \vn{u_t}_{L^2(\mu_t)}\dd t,
\end{split}
\]
where $|\nabla\varphi(\frac12 W_2^2(\pi_t,\bm\mu))| \cdot W_2(\pi_t,\bm\mu) := \sum_{i=1}^k |\partial_i\varphi(\frac12 W_2^2(\pi_t,\bm\mu))| W_2(\pi_t,\mu_i)$. An application of Young's inequality and of the triangle inequality together with \eqref{eq:serotonine} and \eqref{eq:serotonine2} then allows to bound the second and third terms on the right-hand side as 
\begin{equation}\label{eq:easy_term}
\begin{split}
\frac{1}{T} \int_{0}^T \Big(a W_2(\pi_t,\rho) + |\nabla \varphi\left(\frac12 W_2^2(\pi_t,\bm\mu)\right)|\cdot W_2(\pi_t,\bm\mu)\Big)\vn{u_t}_{L^2(\pi_t)}\dd t\\
\leq M\Big(1+\frac{1}{T} \int_{0}^T W^2_2(\pi_t,\rho)\dd t\Big) + \frac{1}{{{4}}T}\int_0^T\vn{u_t}^2_{L^2(\pi_t)} \dd t.
\end{split}
\end{equation}
For the first term, we argue as follows: first of all, by \eqref{eq:quadratic-lower-bound} we have
\[
-\cE(\pi_t) \leq -\overline\cE(\pi_t) + M\big(1+W_2^2(\pi_t,\rho)\big)\leq M\big(1+W_2^2(\pi_t,\rho)\big)
\]
where in the second inequality we  used the fact that $\overline\cE \geq 0$, and this inequality, together with \eqref{eq:serotonine2} and the fact that $\partial_i\varphi \geq 0$ for all $i=1,...,k$, implies
\[
\begin{split}
-\partial_i\varphi\bigg(\frac12 W_2^2(\pi_t,\bm\mu) \bigg) \cE(\pi_t) & \leq \partial_i\varphi\bigg(\frac12 W_2^2(\pi_t,\bm\mu) \bigg) \Big(  M\big(1 + W_2^2(\pi_t,\rho)\big)\Big) \\
& \leq M\big(1 + W_2^2(\pi_t,\rho)\big).
\end{split}
\]
 Plugging this bound and the previous one into the very definition of $g^\dagger_\cE$ (together with the triangle inequality {or estimate \eqref{eq:serotonine2}}) gives
\[
\frac{1}{T}\int_0^T g^\dagger_\cE(\pi_t)\,\dd t \leq -\frac{a}{T}\int_0^T \overline\cE(\pi_t)\,\dd t + M\Big(1+\frac{1}{T}\int_0^TW_2^{2}(\pi_t,\rho)\,\dd t\Big).
\]

Let now $t_0$ be a minimizer for $\overline\cE(\pi_t)$ over the time interval $[0,T]$ (we know that such a minimizer exists since $t \mapsto \overline\cE(\pi_t)$ is lower semicontinuous) and observe that this yields
\[
\frac{1}{T}\int_0^T g_\cE^\dagger(\pi_t)\,\dd t \leq -a\overline\cE(\pi_{t_0}) + M\Big(1+\frac{1}{T}\int_0^TW_2^{2}(\pi_t,\rho)\,\dd t\Big),
\]
so that by combining this estimate with \eqref{eq:easy_term} we establish 
\[
\begin{split}
\frac{1}{T} \big(f^{\dagger}(\pi_T)-f^{\dagger}(\pi_0)\big) & \leq -a\overline\cE(\pi_{t_{0}}) +  M\Big(1+\frac{1}{T} \int_{0}^T W^2_2(\pi_t,\rho)\,\dd t\Big) + \\
& \qquad \frac{1}{4T}\int_0^T \vn{u_t}^2_{L^2(\pi_t)} \dd t.
\end{split}
\]
If we plug this bound into \eqref{eq:raw_1} and rearrange the terms, we get
\[
a\overline{\cE}(\pi_{t_{0}}) + \frac{1}{2T} \int_0^T \Big(e^{-t} - \frac12\Big)  \vn{u_t}^2_{L^2(\mu_t)} \dd t \leq M \Big(1 + \frac{1}{T}\int_{0}^T W^2_2(\pi_t,\rho)\,\dd t +\frac{\eps}{T}\Big)
\]
and it is now sufficient to use \eqref{eq:serotonine} to obtain the desired result.
\end{proof}

In view of Proposition \ref{proposition:existence_regularity_minimizer_subsol}, let us also briefly recall for reader's sake some compactness and lower semicontinuity properties in the Wasserstein space.

\begin{lemma}\label{lemma:Wp-compactness}
Let $(\mu_n)_{n \in \bN}$ be a bounded sequence in $(\cP_2(\R^d),W_2)$. Then it is relatively compact in $(\cP_p(\R^d),W_p)$, for any $1 \leq p < 2$, and
\[
W_2(\mu_\infty,\nu) \leq \liminf_{k \to \infty} W_2(\mu_{n_k},\nu)
\]
for any $W_p$-convergent subsequence $(\mu_{n_k})_{k \in \bN}$ (with limit point $\mu_\infty$) and for any $\nu \in \cP_2(\R^d)$.
\end{lemma}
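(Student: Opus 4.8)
The statement splits into two parts: relative compactness of a $W_2$-bounded sequence in $(\cP_p(\R^d),W_p)$ for $1 \le p < 2$, and lower semicontinuity of $W_2(\cdot,\nu)$ along $W_p$-convergent subsequences. The plan is to derive both from two classical facts: first, that $W_2$-bounded sets are tight and uniformly $q$-integrable for every $q < 2$ (so that weak convergence upgrades to $W_p$-convergence for $p < 2$); second, that $W_2$ is jointly lower semicontinuous with respect to weak convergence of measures.

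For the first part, I would argue as follows. Since $\sup_n W_2(\mu_n,\nu_0) =: C < \infty$ for some fixed $\nu_0 \in \cP_2(\R^d)$, the sequence has uniformly bounded second moments: $\int |x|^2 \, \dd\mu_n \le 2\int |x-y_0|^2 \dd\mu_n + 2|y_0|^2 \le 2C^2 + (\text{const})$ after fixing $y_0$ in the support of $\nu_0$, or more simply using $\int|x|^2\dd\mu_n \lesssim W_2^2(\mu_n,\delta_0)$ and the triangle inequality $W_2(\mu_n,\delta_0) \le W_2(\mu_n,\nu_0) + W_2(\nu_0,\delta_0)$. Uniform boundedness of second moments gives tightness (Markov's inequality on $\{|x| > R\}$), so by Prokhorov's theorem every subsequence has a further weakly convergent subsequence $\mu_{n_k} \rightharpoonup \mu_\infty$. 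Moreover, the uniform bound on second moments also implies uniform integrability of $|x|^p$ for every $p < 2$, since on $\{|x| > R\}$ one has $|x|^p \le R^{p-2}|x|^2$. By the standard characterization (e.g.\ \cite[Theorem 7.12]{Vi03} or \cite[Proposition 7.1.5]{AmGiSa08}), weak convergence together with convergence/uniform integrability of $p$-th moments is equivalent to $W_p$-convergence; hence $\mu_{n_k} \to \mu_\infty$ in $W_p$. This gives relative compactness in $(\cP_p(\R^d),W_p)$. One should also note $\mu_\infty \in \cP_2(\R^d)$ by Fatou applied to the weakly convergent subsequence.

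For the second part, let $(\mu_{n_k})$ be any $W_p$-convergent subsequence with limit $\mu_\infty$; then in particular $\mu_{n_k} \rightharpoonup \mu_\infty$ weakly. The lower semicontinuity of the $2$-Wasserstein distance with respect to weak (narrow) convergence — valid because the cost $|x-y|^2$ is continuous and bounded below — gives immediately
\[
W_2(\mu_\infty,\nu) \le \liminf_{k \to \infty} W_2(\mu_{n_k},\nu)
\]
for every $\nu \in \cP_2(\R^d)$; this is \cite[Proposition 7.1.3]{AmGiSa08}. No obstacle is expected here. The only mildly delicate point in the whole argument is the equivalence "weak convergence $+$ $p$-moment control $\Leftrightarrow$ $W_p$-convergence," but this is entirely standard and can be quoted; the rest is bookkeeping with moments and the triangle inequality. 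I would present the proof in essentially two short paragraphs mirroring the two claims, citing \cite{AmGiSa08} for both the moment/tightness characterization and the weak lower semicontinuity of $W_2$.
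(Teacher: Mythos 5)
Your proof is correct and follows essentially the same route as the paper's: both pass from $W_2$-boundedness to tightness, extract narrowly convergent subsequences, upgrade to $W_p$-convergence via uniform $p$-moment control for $p<2$, and invoke narrow lower semicontinuity of $W_2$ for the second claim. The only difference is bibliographic (the paper cites \cite[Remark 5.1.5]{AmGiSa08} for tightness and \cite[Proposition 3.5]{AmbrosioGigli11} for lower semicontinuity, while you spell out Markov/Prokhorov and cite \cite[Proposition 7.1.3]{AmGiSa08}), which is immaterial.
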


\begin{proof}
First of all, $(\mu_n)_{n \in \bN} \subseteq \cP_2(\R^d)$ is uniformly tight, since the second moments are uniformly bounded by $W_2$-boundedness, and balls in $\R^d$ are relatively compact, so that we can apply \cite[Remark 5.1.5]{AmGiSa08}. As a consequence, $(\mu_n)_{n \in \bN}$ is relatively compact w.r.t.\ narrow convergence, so that up to extracting a subsequence we may assume that $(\mu_n)_{n \in \bN}$ is indeed narrowly convergent to some limit $\mu_\infty$.

Then, by tightness and H\"older inequality, we can deduce that the $p$-th moment of $\mu_n$ w.r.t.\ any fixed reference point, $p<2$, converge to the $p$-th moment of $\mu_\infty$. Hence, $W_p(\mu_n,\mu_\infty) \to 0$ as $n \to \infty$.

The final part of the statement follows from the fact that $W_2$ is lower semicontinuous w.r.t.\ narrow convergence (see \cite[Proposition 3.5]{AmbrosioGigli11}, thus w.r.t.\ $W_p$-convergence as well.
\end{proof}

The next proposition is pivotal in the proof of the subsolution property for $\Phi^*$, as there we build a sequence $(\pi^n)_{n \geq 1}$ of almost optimizer for $\Phi - f^\dagger$ with quantified controls on distance and energy such that for any sequence $(\tilde{\pi}_\cdot^n,\tilde{u}_\cdot^n)$ of almost optimally controlled curves starting from $\pi^n$ we are able to uniformly bound the time-average of the $L^2$-norm of the controls $\tilde{u}^n_\cdot$.

\begin{proposition} \label{proposition:existence_regularity_minimizer_subsol}
Let
\[
f^\dagger(\pi) := \frac{a}{2} W_2^2(\pi,\rho) + \varphi\left(\frac12 W^2_2(\pi,\bm\mu)\right)
\]
be an admissible test function and fix $1 \leq p < 2$. Then there exists a sequence $(\pi^n)_{n \geq 1}$ converging to some $\pi^0$ w.r.t.\ $W_p$ such that:
\begin{enumerate}[(a)]
\item \label{item:subsol_optimizers_convergence_sup} The sequence $(\pi^n)_{n \geq 1}$ is optimizing for $\sup \{\Phi - f^\dagger\}$:
\[
\Phi(\pi^n) - f^\dagger(\pi^n) \geq \sup \left\{\Phi - f^\dagger \right\} { + O(T_n)}
\]
with $T_n=n^{-1}$.
\item \label{item:subsol_optimizers_regularization} The limit point $\pi^0$ is optimal for $\sup \{\Phi - f^\dagger\}$ and $\sup \{\Phi^* - f^\dagger\}$, where $\Phi^*$ denotes the upper semicontinuous relaxation of $\Phi$ w.r.t.\ the $W_p$ topology:
\[
\Phi^*(\pi^0) -f^\dagger(\pi^0) = \sup\{\Phi^* - f^\dagger\} = \sup\{\Phi - f^\dagger\}.
\]
\item \label{item:subsol_optimizers_convergence_metric} We have convergence of the metric:
\[
\lim_{n \to \infty} W_2(\pi^n,\rho) = W_2(\pi^0,\rho) \qquad \textrm{and} \qquad \lim_{n \to \infty} W_2(\pi^n,\mu_i) = W_2(\pi^0,\mu_i),
\]
for all $i=1,\ldots,k$.
\item \label{item:subsol_optimizers_convergence_valuefunction} We have convergence of the value function:
\begin{equation}\label{eq:almost-optimality}
\lim_{n \to \infty} \Phi(\pi^n) = \Phi^*(\pi^0).
\end{equation}
\item \label{item:subsol_optimizers_convergence_time} Let $\tau_n = n^{-1/2}$ and $(\tilde\pi^n_t, \tilde u^n_t)$ be an admissible curve starting from $\pi^n$ which is $T_n$-optimal in \eqref{eq:DPP} on the time interval $[0,\tau_n]$, that is
\begin{equation}\label{eq:toxicity}
\Phi(\pi^n) \leq \cA_{\tau_n}(\tilde\pi^n_\cdot, \tilde u^n_\cdot) + e^{-\tau_n} \Phi(\tilde \pi^n_{\tau_n}) + T_n.
\end{equation}
Then
\begin{equation}\label{eq:reg_subs_5}
\sup_{n \geq 1} \frac{1}{\tau_n} \int_0^{\tau_n} \vn{\tilde u^n_t}^2_{L^2(\tilde{\pi}^n_t)} \dd t < +\infty.
\end{equation}
Furthermore, let $0\leq (t_n)_{n \geq 1}$ be a sequence such that $t_n\leq \tau_n$ for all $n$. Then $\tilde\pi^n_{t_n}$ converges to $\pi^0$ w.r.t.\ $W_p$ and
\begin{equation}\label{eq:patience}
\lim_{n \to \infty} W_2(\tilde\pi^n_{t_n},\rho) = W_2(\pi^0,\rho), \qquad \lim_{n \to \infty} W_2(\tilde\pi^n_{t_n},\mu_i) = W_2(\pi^0,\mu_i),
\end{equation}
for all $i=1,\ldots,k$.
\end{enumerate}
\end{proposition}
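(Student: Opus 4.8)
The plan is to construct the sequence $(\pi^n)$ by a two-step scheme: start from an almost-maximizer of $\Phi - f^\dagger$, then \emph{regularize} it by flowing a short time along an almost optimally controlled curve, so that $\cE$ becomes controlled at the new base point. Concretely, for $T_n = n^{-1}$ pick $\pi^n_0$ with $\Phi(\pi^n_0) - f^\dagger(\pi^n_0) \geq \sup\{\Phi - f^\dagger\} - T_n$, and then an admissible curve $(\pi^n_\cdot, u^n_\cdot)$ satisfying the almost-DPP inequality \eqref{eqn:raw_estimate_Phifdagger_almostDPP} with $\eps = T_n$ on a horizon $T = T_n$. First I would apply Lemma \ref{lemma:raw_estimate_Phifdagger}: since $W_2(\pi^n_0,\rho)$ is bounded uniformly in $n$ (because $\Phi - f^\dagger$ is bounded above, $|\Phi| \leq \|h\|_\infty$, and $f^\dagger$ coercive in $W_2(\cdot,\rho)$, so $\tfrac a2 W_2^2(\pi^n_0,\rho) \leq \|h\|_\infty - \inf(\Phi - f^\dagger) + T_n$), the lemma yields a time $t_0^n \in [0,T_n]$ and a uniform constant $M$ with
\[
a\,\overline{\cE}(\pi^n_{t_0^n}) + \frac{1}{2T_n}\int_0^{T_n}\Big(e^{-t} - \tfrac12\Big)\vn{u^n_t}^2_{L^2(\pi^n_t)}\,\dd t \leq M\Big(1 + W_2^2(\pi^n_0,\rho) + 1\Big) \leq \text{const}.
\]
Since $e^{-t} - \tfrac12 \geq 0$ for $t \leq T_n \leq \ln 2$, this bounds $\overline{\cE}(\pi^n_{t_0^n})$ uniformly, hence $\cE(\pi^n_{t_0^n})$ is bounded above uniformly in $n$ (and bounded below by \eqref{eq:quadratic-lower-bound} once we know $W_2(\pi^n_{t_0^n},\nu)$ is bounded, which follows from \eqref{eq:serotonine}). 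Set $\pi^n := \pi^n_{t_0^n}$.

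Next I establish items \ref{item:subsol_optimizers_convergence_sup}--\ref{item:subsol_optimizers_convergence_valuefunction}. For \ref{item:subsol_optimizers_convergence_sup}, since $t_0^n \leq T_n$ and the controlled curve follows the gradient flow to leading order, one shows $\Phi(\pi^n) - f^\dagger(\pi^n) \geq \Phi(\pi^n_0) - f^\dagger(\pi^n_0) + O(T_n) \geq \sup\{\Phi - f^\dagger\} + O(T_n)$; the $O(T_n)$ term comes from the Lipschitz-in-time control of $W_2^2(\pi^n_\cdot, \rho)$ and $W_2^2(\pi^n_\cdot, \bm\mu)$ on $[0,T_n]$ (bounded speed thanks to the uniform metric and control bounds, via $|\dot\pi^n_\cdot| \in L^2$) and from the DPP estimate together with $|\Phi| \leq \|h\|_\infty$. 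By Lemma \ref{lemma:Wp-compactness}, $(\pi^n)$ is $W_2$-bounded, hence relatively $W_p$-compact; pass to a subsequence $\pi^n \to \pi^0$ in $W_p$. For \ref{item:subsol_optimizers_regularization}: by definition of the $W_p$-relaxation $\Phi^*$ we have $\Phi^*(\pi^0) \geq \limsup_n \Phi(\pi^n)$; combined with $W_p$-lower semicontinuity of $f^\dagger$ (it is a $W_2$-cylindrical function, hence $W_p$-continuous) and \ref{item:subsol_optimizers_convergence_sup} one gets $\Phi^*(\pi^0) - f^\dagger(\pi^0) \geq \sup\{\Phi - f^\dagger\} = \sup\{\Phi^* - f^\dagger\}$, the last equality because $\Phi^* \geq \Phi$ and, conversely, $\sup\{\Phi^* - f^\dagger\} \leq \sup\{\Phi - f^\dagger\}$ by density of $W_p$-sequences and $W_p$-continuity of $f^\dagger$ (every value $\Phi^*(\pi) - f^\dagger(\pi)$ is a limsup of $\Phi(\pi_j) - f^\dagger(\pi_j)$). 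This forces equality throughout, giving \ref{item:subsol_optimizers_regularization} and, retracing the chain, also \ref{item:subsol_optimizers_convergence_valuefunction}, i.e.\ $\lim_n \Phi(\pi^n) = \Phi^*(\pi^0)$. For \ref{item:subsol_optimizers_convergence_metric}: $W_p$-convergence gives $\liminf_n W_2(\pi^n,\rho) \geq W_2(\pi^0,\rho)$ by Lemma \ref{lemma:Wp-compactness}; the reverse inequality $\limsup_n W_2(\pi^n,\rho) \leq W_2(\pi^0,\rho)$ is obtained from the optimality chain: if it failed along a subsequence, then (using $\Phi(\pi^n) \to \Phi^*(\pi^0)$ and $\varphi\in\cT$ monotone, hence that strictly larger $W_2$-distances would strictly decrease $f^\dagger$ — wait, $f^\dagger$ increases in $W_2(\cdot,\rho)$) one contradicts $\Phi^*(\pi^0) - f^\dagger(\pi^0) = \sup\{\Phi^* - f^\dagger\}$ together with $\Phi^*(\pi^0) \geq \limsup_n[\Phi(\pi^n) - f^\dagger(\pi^n)] + f^\dagger(\pi^0)$; making this rigorous uses that $f^\dagger$ is continuous and strictly monotone in each $W_2$-argument, so convergence of $\Phi(\pi^n) - f^\dagger(\pi^n)$ to the value $\sup$ forces $f^\dagger(\pi^n) \to f^\dagger(\pi^0)$, and then the strict monotonicity of $\varphi$ and the quadratic term upgrade this to convergence of each individual $W_2(\pi^n, \cdot)$.

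Finally, for item \ref{item:subsol_optimizers_convergence_time}, let $(\tilde\pi^n_\cdot, \tilde u^n_\cdot)$ be a $T_n$-optimal admissible curve from $\pi^n$ on $[0,\tau_n]$ with $\tau_n = n^{-1/2}$. I would apply Lemma \ref{lemma:raw_estimate_Phifdagger} again — this time with base point $\pi^n$, horizon $\tau_n$, and $\eps = T_n$ — noting that the hypothesis \eqref{eqn:raw_estimate_Phifdagger_optimizer} holds with $\eps = O(T_n)$ by item \ref{item:subsol_optimizers_convergence_sup} and \eqref{eqn:raw_estimate_Phifdagger_almostDPP} is exactly \eqref{eq:toxicity}. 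The conclusion of the lemma gives
\[
a\,\overline{\cE}(\tilde\pi^n_{s_n}) + \frac{1}{2\tau_n}\int_0^{\tau_n}\Big(e^{-t} - \tfrac12\Big)\vn{\tilde u^n_t}^2_{L^2(\tilde\pi^n_t)}\,\dd t \leq M\Big(1 + W_2^2(\pi^n,\rho) + \frac{T_n}{\tau_n}\Big)
\]
for some $s_n \in [0,\tau_n]$; since $W_2^2(\pi^n,\rho)$ is bounded (item \ref{item:subsol_optimizers_convergence_metric}), $T_n/\tau_n = n^{-1/2} \to 0$, and $e^{-t} - \tfrac12 \geq c_0 > 0$ for $t \leq \tau_n$ small, we obtain \eqref{eq:reg_subs_5}. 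For the last claim, use Lemma \ref{lemma:control_controlled_paths}(b), specifically \eqref{eq:uniform-control-distance} with $\alpha \leq 3(\kappa-1)$, on $[0,\tau_n]$: with $\int_0^{\tau_n}\vn{\tilde u^n_t}^2\,\dd t = O(\tau_n)$ and $\tau_n \to 0$, the right-hand side gives $\sup_{t\leq\tau_n} W_2^2(\tilde\pi^n_t, \rho) \leq W_2^2(\pi^n,\rho) + O(\tau_n)$, and similarly (after subtracting the corresponding lower bound / using $|\dot{\tilde\pi}^n_\cdot| \in L^2$ with $\int|\dot{\tilde\pi}^n_t|^2\,\dd t = O(\tau_n)$ by Remark \ref{rmk:continuity_equation} and the $\cE$ and control bounds) one gets $W_2(\tilde\pi^n_{t_n}, \pi^n) \to 0$, hence $W_2(\tilde\pi^n_{t_n}, \cdot)$ and $W_2(\pi^n,\cdot)$ have the same limit; combined with item \ref{item:subsol_optimizers_convergence_metric} this yields \eqref{eq:patience}, and $W_p$-convergence $\tilde\pi^n_{t_n} \to \pi^0$ follows from $W_2$-boundedness plus $W_p$-relative-compactness plus identification of the limit via the metric convergence.

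\textbf{Main obstacle.} The delicate point is item \ref{item:subsol_optimizers_convergence_metric} and the surrounding argument that $\Phi(\pi^n) - f^\dagger(\pi^n) \to \sup$ forces \emph{individual} convergence of each $W_2(\pi^n,\rho)$ and $W_2(\pi^n,\mu_i)$ rather than mere $W_p$-convergence: one only controls the combination $f^\dagger(\pi^n) = \tfrac a2 W_2^2(\pi^n,\rho) + \varphi(\tfrac12 W_2^2(\pi^n,\bm\mu))$, and must disentangle it using strict monotonicity ($a>0$, $\partial_i\varphi>0$) together with $W_p$-lower semicontinuity of each $W_2$-distance (Lemma \ref{lemma:Wp-compactness}) — the $W_p$ topology is genuinely weaker than $W_2$, so upper semicontinuity of the distances is not for free and has to be extracted from the optimality. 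A secondary technical nuisance is keeping track, throughout the regularization step, that the short-time displacement $W_2(\pi^n, \pi^n_0)$, resp.\ $W_2(\tilde\pi^n_{t_n}, \pi^n)$, is genuinely $o(1)$: this needs the $L^2$-in-time bound on the velocity field, which in turn needs both the control bound from Lemma \ref{lemma:raw_estimate_Phifdagger} and the $L^2$-bound on $w_t = \tfrac12\nabla U'(\pi_t) + \nabla V + \nabla W*\pi_t$ coming from $\pi_t \in \cD(\cI)$, uniformly along the (short) curve.
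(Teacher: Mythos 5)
Your construction and overall strategy follow the paper's proof closely: regularize almost-maximizers by flowing along almost-optimally controlled curves, harvest the energy bound from Lemma \ref{lemma:raw_estimate_Phifdagger}, set $\pi^n := \pi^n_{t_0^n}$, extract a $W_p$-limit via Lemma \ref{lemma:Wp-compactness}, and reapply Lemma \ref{lemma:raw_estimate_Phifdagger} on the slower time-scale $\tau_n = \sqrt{T_n}$ for item \ref{item:subsol_optimizers_convergence_time}. The ``main obstacle'' you flag --- disentangling the scalar quantity $f^\dagger(\pi^n)$ into convergence of the individual distances $W_2(\pi^n,\rho)$, $W_2(\pi^n,\mu_i)$ --- is exactly what the paper isolates into Lemma \ref{lemma:upgrade_semicontinuity_composition}. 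So the route is right.

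There is, however, a genuine error in your justification of item \ref{item:subsol_optimizers_regularization}. You assert twice that $f^\dagger$ ``is a $W_2$-cylindrical function, hence $W_p$-continuous,'' and you lean on this to get $\sup\{\Phi^* - f^\dagger\} \leq \sup\{\Phi - f^\dagger\}$ via ``every value $\Phi^*(\pi) - f^\dagger(\pi)$ is a limsup of $\Phi(\pi_j) - f^\dagger(\pi_j)$.'' This is false: for $1 \leq p < 2$ the map $\pi \mapsto W_2^2(\pi,\rho)$ is only $W_p$-\emph{lower semicontinuous} (Lemma \ref{lemma:Wp-compactness}); it is not $W_p$-continuous, since the second moment can drop in the $W_p$-limit without any of the lower moments detecting this. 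Consequently, along a sequence $\pi_j \to \pi$ in $W_p$ realizing $\Phi^*(\pi)$ you cannot conclude $f^\dagger(\pi_j) \to f^\dagger(\pi)$, only $\liminf_j f^\dagger(\pi_j) \geq f^\dagger(\pi)$, and the limsup-of-the-difference bound you invoke goes in the wrong direction. Note also that your claim, if true, would make Lemma \ref{lemma:upgrade_semicontinuity_composition} (and hence item \ref{item:subsol_optimizers_convergence_metric}) trivial --- it is precisely the failure of $W_p$-continuity of $W_2$ that makes that upgrade lemma non-vacuous, and you yourself flag this asymmetry in your ``main obstacle'' paragraph. The part of the chain you do need for the key inequality $\Phi^*(\pi^0) - f^\dagger(\pi^0) \geq \sup\{\Phi - f^\dagger\}$ only uses the $W_p$-\emph{upper} semicontinuity of $\Phi^* - f^\dagger$ (i.e., the lsc of $f^\dagger$, which is correct), so that step survives; you should correct the parenthetical justifications and align your argument with the paper's one-sided semicontinuity rather than with continuity of $f^\dagger$.

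A secondary, minor remark: in item \ref{item:subsol_optimizers_convergence_sup} you appeal somewhat loosely to ``Lipschitz-in-time control of $W_2^2(\pi^n_\cdot,\rho)$'' and ``bounded speed via $|\dot\pi^n_\cdot| \in L^2$''. The paper obtains the bound $f^\dagger(\pi^n) - f^\dagger(\pi^n_0) \leq O(T_n)$ directly from Lemma \ref{lemma:control_controlled_paths}\ref{item:lemma:control_controlled_paths_metriccontrol} applied with base points $\rho$ and $\mu_i$, together with the uniform control bound $\tfrac{1}{T_n}\int_0^{T_n} \|u^n_t\|^2_{L^2(\pi^n_t)} \dd t = O(1)$, rather than an $L^2$-in-time bound on the full velocity $v_t$ (which would also involve the unbounded subdifferential term $w_t$); your wording risks introducing unnecessary regularity requirements. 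The rest of your items \ref{item:subsol_optimizers_convergence_valuefunction} and \ref{item:subsol_optimizers_convergence_time} are in agreement with the paper's argument.
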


\begin{proof}
\underline{Preliminary facts.} For any $n>0$, let $(\pi^n_t,u^n_t)_{t\geq0}$ satisfy \eqref{eqn:raw_estimate_Phifdagger_optimizer} and \eqref{eqn:raw_estimate_Phifdagger_almostDPP} for the choice $\eps = T = T_n := 1/n$. That is to say,
\begin{align}
& \Phi(\pi^n_0) - f^\dagger(\pi^n_0) \geq \sup\{\Phi - f^\dagger\} - T_n, \label{eqn:raw_estimate_Phifdagger_optimizer_n} \\
& \Phi(\pi^n_0) \leq \cA_{T_n}(\pi^n_{\cdot},u^n_{\cdot}) + e^{-T_n} \Phi(\pi^n_{T_n}) + T_n. \label{eqn:raw_estimate_Phifdagger_almostDPP_n}
\end{align}
By Corollary \ref{corollary:Phi_usc_admissiblecurves} $|\Phi| \leq \|h\|_\infty$, so that $(\Phi(\pi_0^n))_{n \geq 1}$ is a bounded sequence; by \eqref{eqn:raw_estimate_Phifdagger_optimizer_n} this implies that $(f^{\dagger}(\pi^n_0))_{n \geq 1}$ is an upper bounded sequence. From this and the fact that $\varphi$ is lower bounded (by continuity and the fact that $\partial_i\varphi >0 0$ for all $i$), we deduce the former of the following
\begin{equation}\label{eq:reg_subs_2}
\sup_{n \geq 1} W_2(\pi^n_0,\rho) < +\infty, \qquad \sup_{n \geq 1} \int_0^{T_0} \vn{u^n_t}^2_{L^2(\mu^n_t)}\dd t < +\infty,
\end{equation} 
while the latter follows from Lemma \ref{lemma:raw_estimate_Phifdagger} together with the former. Moreover, applying the same lemma with $T=T_n$ we find that there exists a sequence $t_n \in [0,T_n]$ such that
\begin{equation}\label{eq:reg_subs_1}
\sup_{n \geq 1} \bigg\{\overline\cE(\pi^n_{t_n}) + \frac{1}{T_n}\int_0^{T_n} \vn{u^n_t}^2_{L^2(\mu^n_t)} dt\bigg\} < +\infty.
\end{equation}
Consider now the sequence $\pi^n := \pi^n_{t_n}$. From Lemma \ref{lemma:control_controlled_paths}\ref{item:lemma:control_controlled_paths_metriccontrol} together with \eqref{eq:reg_subs_2} we obtain that $(\pi^n)_{n \geq 1}$ is bounded in $(\cP_2(\R^d),W_2)$, hence relatively compact in $(\cP_p(\R^d),W_p)$ by Lemma \ref{lemma:Wp-compactness}. We can thus extract a (non-relabeled) $W_p$-converging subsequence whose limit we shall denote $\pi^0$. We now proceed to show that $(\pi^n)_{n\geq1}$ and $\pi^0$ have the desired properties.

\medskip

\noindent\underline{Proof of \ref{item:subsol_optimizers_convergence_sup}.} First of all, applying Lemma \ref{lemma:control_controlled_paths}\ref{item:lemma:control_controlled_paths_metriccontrol} to the curve $(\pi^n_{\cdot},u^n_{\cdot})$ with the choices $T=T_n$, $\rho\in\{\rho,\mu_i\}$ and using \eqref{eq:reg_subs_1} give
\[
W_2^2(\pi^n,\rho)-W^2_2(\pi^n_0,\rho) \leq O(T_n) \qquad \textrm{and} \qquad W_2^2(\pi^n,\mu_i) - W^2_2(\pi^n_0,\mu_i) \leq O(T_n) 
\]
for all $i=1,\ldots,k$, which implies
\begin{equation}\label{eq:reg_subs_3}
f^\dagger(\pi^n) - f^\dagger(\pi^n_0) \leq O(T_n).
\end{equation}
Next, observe that 
\begin{equation}\label{eq:reg_subs_4}
\begin{split}
\Phi(\pi^n_{t_n}) & \stackrel{\eqref{eq:DPP}}{\geq} \cA_{t_n,T_n}(\pi^n_{\cdot},u^n_{\cdot}) + e^{-(T_n-t_n)}\Phi(\pi^n_{T_n}) \\
&\stackrel{\eqref{eq:reg_subs_1}}{\geq} e^{-(T_n-t_n)}\Phi(\pi^n_{T_n}) - O(T_n) \\
&\stackrel{\eqref{eqn:raw_estimate_Phifdagger_almostDPP_n}}{\geq} e^{t_n}\big(\Phi(\pi^n_{0}) - \cA_{T_n}(\pi^n_\cdot,u^n_\cdot)\big) - O(T_n) \\
&\geq \Phi(\pi^n_{0}) - O(T_n)\,,
\end{split}   
\end{equation}
where $\cA_{t_n,T_n}$ is defined as in \eqref{eq:action_T} and in the last inequality we used $e^{t_n}=1 + O (T_n)$. Thus, gathering \eqref{eq:reg_subs_3} and \eqref{eq:reg_subs_4} proves \ref{item:subsol_optimizers_convergence_sup}.

\medskip

\noindent\underline{Proof of \ref{item:subsol_optimizers_regularization}.} Note that $-f^\dagger$ is upper semicontinuous w.r.t.\ $W_p$ convergence: this follows from the $W_p$-lower semicontinuity of $W_2(\cdot,\rho)$, $W_2(\cdot,\mu_i)$ (see Lemma \ref{lemma:Wp-compactness}) and the fact that $\partial_i\varphi >0 $, so that $\varphi(\bm{x}) < \varphi(\bm{y})$ whenever $x_i < y_i$ for all $i=1,\dots,k$. Leveraging this fact, we deduce that
\[
\begin{split}
(\Phi^*-f^\dagger)(\pi^0) & \geq \limsup_{n \to \infty} (\Phi^*-f^\dagger)(\pi^n) \\
& \geq \limsup_{n \to \infty} (\Phi-f^\dagger)(\pi^n) \geq \liminf_{n \to \infty} (\Phi-f^\dagger)(\pi^n) \\
& \stackrel{\ref{item:subsol_optimizers_convergence_sup}}{\geq} \sup \{\Phi-f^\dagger\} = \sup (\Phi-f^\dagger)^* = \sup \{\Phi^*-f^\dagger\}.
\end{split}
\]
whence the conclusion.

\medskip

\noindent\underline{Proof of \ref{item:subsol_optimizers_convergence_valuefunction}.} As a byproduct of \ref{item:subsol_optimizers_regularization}, all inequalities in the previous display are in fact equalities, whence the first of the following relations
\[
\begin{cases}
\displaystyle{\lim_{n \to \infty}(\Phi-f^\dagger)(\pi^n) = (\Phi^*-f^\dagger)(\pi^0)} \\
\displaystyle{\limsup_{n \to \infty} \Phi(\pi^n) \leq \Phi^*(\pi^0)} \\
\displaystyle{\limsup_{n \to \infty} -f^\dagger(\pi^n) \leq -f^\dagger(\pi^0)}
\end{cases}
\]
while the second and the third ones are trivial consequences of the $W_p$-upper semicontinuity of $\Phi^*$ and $-f^\dagger$. Combining the three we obtain
\[
\limsup_{n \to \infty} \Phi(\pi^n) = \Phi^*(\pi^0) \qquad \textrm{and} \qquad \limsup_{n \to \infty} -f^\dagger(\pi^n) = -f^\dagger(\pi^0)
\]
and, up to extracting a non-relabeled subsequence (which does not affect the validity of \ref{item:subsol_optimizers_convergence_sup}),
\[
\lim_{n \to \infty} \Phi(\pi^n) = \Phi^*(\pi^0) \qquad \textrm{and} \qquad \lim_{n \to \infty} -f^\dagger(\pi^n) = -f^\dagger(\pi^0),
\]
so that \ref{item:subsol_optimizers_convergence_valuefunction} is proven too.

\medskip

\noindent\underline{Proof of \ref{item:subsol_optimizers_convergence_metric}.} It is a consequence of $f^\dagger(\mu^n) \to f^\dagger(\mu^0)$ and Lemma \ref{lemma:upgrade_semicontinuity_composition}.

\medskip

\noindent\underline{Proof of \ref{item:subsol_optimizers_convergence_time}.} Let $(\tilde\pi^n_{\cdot},\tilde u^n_{\cdot})$ be as in the statement. Because of \eqref{eq:toxicity} and item \ref{item:subsol_optimizers_convergence_sup}, we can apply Lemma \ref{lemma:raw_estimate_Phifdagger} to such a curve with $T=\tau_n$ and $\eps=T_n=\tau^2_n$. We thus obtain 
\[
\frac{1}{\tau_n} \int_0^{\tau_n} \vn{\tilde u^n_t}^2_{L^2(\tilde{\pi}^n_t)} \leq M\big(1 + \tau_n + W_2(\pi^n,\rho)\big).
\]
The first part of the claim then directly follows from item \ref{item:subsol_optimizers_convergence_metric}. To prove the fact that $\tilde\pi^n_{t_n}$ converges to $\pi^0$ w.r.t.\ $W_p$ observe that by H\"older inequality $W_p \leq W_2$ for any $p<2$, so that
\[
W_p(\tilde\pi^n_{t_n},\pi^0) \leq W_p(\pi^n,\pi^0) + W_2(\tilde\pi^n_{t_n},\pi^n).
\]
As by construction $W_p(\pi^n,\pi^0) \to 0$, it is sufficient to show that the second term on the right-hand side above vanishes. To this aim, we apply Lemma \ref{lemma:control_controlled_paths} 
\ref{item:lemma:control_controlled_paths_metriccontrol} to the curves $(\tilde\pi^n_{\cdot},\tilde u^n_{\cdot})$ with the choices $\rho=\pi^n$ to obtain
\[
W_2^2(\tilde\pi^n_{t_n},\pi^n) \leq M_n \tau_n + \int_0^{\tau_n} \vn{\tilde u^n_t}^2_{L^2(\mu^n_t)} \dd t \stackrel{\eqref{eq:reg_subs_5}}{\leq}  M_n \tau_n +M \tau_n,
\]
with $M_n = \max\{\cE(\pi^n) + c_1 W_2^2(\pi^n,\nu) + c_2,0\}$ and $c_1,c_2,\nu$ as in \eqref{eq:quadratic-lower-bound}. From the fact that $(\pi^n)_{n \geq 1}$ is a $W_2$-bounded sequence and (by \eqref{eq:reg_subs_1} and the very definition of $\overline{\cE}$) it is also bounded in energy, we deduce that $\sup_{n \geq 1} M_n<+\infty$, whence the conclusion.

Finally, the first in \eqref{eq:patience} follows from
\[
W_2(\tilde\pi^n_{t_n},\rho ) \leq W_2(\tilde\pi^n_{t_n},\pi^n) + W_2(\pi^n,\rho),
\]
and the argument outlined in the previous paragraph, which ensures that $W_2^2(\tilde\pi^n_{t_n},\pi^n) \leq O(\tau_n)$, and \ref{item:subsol_optimizers_convergence_metric}. The proof for the second in \eqref{eq:patience} is analogous.
\end{proof}

We are finally in the position to prove that $\Phi^*$ is a viscosity subsolution to \eqref{eqn:HJ_subsol_strategy}, as outlined at the beginning of the section.

\begin{proposition}\label{prop:subsolution}
Under Assumption \ref{ass: OT energy functional}, let $h \in C_b(\cP_p(\R^d))$ for some $p<2$. Then  $\Phi^*$, the upper semicontinuous relaxation w.r.t.\ the $W_p$-topology of the value function $\Phi$, defined in \eqref{def:value_fun}, is a viscosity subsolution of the Hamilton-Jacobi equation \eqref{eqn:differential_equation_tildeH1}  with $h^\dag = h$ and $A_\dag = H_\dag$ as defined in Definition \ref{definition:H}.
\end{proposition}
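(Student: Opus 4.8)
The plan is to verify the optimizer form \eqref{eqn:visc_subsol_strategy1}--\eqref{eqn:visc_subsol_strategy2} of the subsolution property, which suffices for Definition \ref{definition:viscosity_solutions_HJ_sequences} upon taking a constant sequence. Boundedness and $W_p$-upper semicontinuity of $\Phi^*$ are immediate from Corollary \ref{corollary:Phi_usc_admissiblecurves} and the definition of the relaxation. Fix $(f^\dagger,g^\dagger)\in H_\dagger$ and let $\pi^n\to\pi^0$ (in $W_p$) be the sequence furnished by Proposition \ref{proposition:existence_regularity_minimizer_subsol}. Item \ref{item:subsol_optimizers_regularization} of that proposition already gives $\Phi^*(\pi^0)-f^\dagger(\pi^0)=\sup\{\Phi^*-f^\dagger\}=\sup\{\Phi-f^\dagger\}$, i.e.\ \eqref{eqn:visc_subsol_strategy1}; the whole effort goes into proving $\Phi^*(\pi^0)-g^\dagger(\pi^0)\le h(\pi^0)$, i.e.\ \eqref{eqn:visc_subsol_strategy2}.

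To this end I would run the classical "follow an almost optimal control for an infinitesimal time" argument with the scales $T_n=n^{-1}$, $\tau_n=n^{-1/2}$ of Proposition \ref{proposition:existence_regularity_minimizer_subsol}. Pick, via \eqref{eq:DPP}, admissible curves $(\tilde\pi^n_\cdot,\tilde u^n_\cdot)$ starting at $\pi^n$ that satisfy the $T_n$-optimality \eqref{eq:toxicity}; then Proposition \ref{proposition:existence_regularity_minimizer_subsol}\ref{item:subsol_optimizers_convergence_time} applies, giving \eqref{eq:reg_subs_5} and the pointwise convergences \eqref{eq:patience}. Now combine three ingredients: (i) the lower bound $\Phi(\pi^n)\ge \Phi^*(\pi^0)-f^\dagger(\pi^0)+f^\dagger(\pi^n)-O(T_n)$ coming from the almost-optimality \ref{item:subsol_optimizers_convergence_sup}; (ii) the optimality of $\pi^0$, which yields $\Phi(\tilde\pi^n_{\tau_n})\le\Phi^*(\tilde\pi^n_{\tau_n})\le\Phi^*(\pi^0)-f^\dagger(\pi^0)+f^\dagger(\tilde\pi^n_{\tau_n})$; and (iii) the controlled EVI \eqref{eq:refined} applied to $(\tilde\pi^n_\cdot,\tilde u^n_\cdot)$, which bounds $f^\dagger(\tilde\pi^n_{\tau_n})-f^\dagger(\pi^n)$ from above by $\int_0^{\tau_n}\bigl(g^\dagger_\cE(\tilde\pi^n_t)-\ip{\tilde u^n_t}{\phi^n_t}_{L^2(\tilde\pi^n_t)}\bigr)\dd t$ with $\phi^n_t:=a\,\botmap{\tilde\pi^n_t}{\rho}+\nabla\varphi\bigl(\tfrac12 W_2^2(\tilde\pi^n_t,\bm\mu)\bigr)\cdot\botmap{\tilde\pi^n_t}{\bm\mu}$. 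Substituting (ii) and (iii) into \eqref{eq:toxicity} and then using (i) for $\Phi(\pi^n)$, the blocks $\Phi^*(\pi^0)-f^\dagger(\pi^0)+f^\dagger(\pi^n)$ cancel except for the prefactor $1-e^{-\tau_n}$; dividing the resulting inequality by $\tau_n$, the left-hand side converges to $\Phi^*(\pi^0)$ (using $\tfrac{1-e^{-\tau_n}}{\tau_n}\to1$ and $f^\dagger(\pi^n)\to f^\dagger(\pi^0)$ from \ref{item:subsol_optimizers_convergence_metric}), while the error is $O(T_n)/\tau_n=O(\tau_n)\to0$, which is precisely why the two scales are coupled through $T_n=\tau_n^2$.

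It then remains to take $\limsup_n$ of the right-hand side, namely of $\tfrac{1}{\tau_n}\cA_{\tau_n}(\tilde\pi^n_\cdot,\tilde u^n_\cdot)+\tfrac{e^{-\tau_n}}{\tau_n}\int_0^{\tau_n}\bigl(g^\dagger_\cE(\tilde\pi^n_t)-\ip{\tilde u^n_t}{\phi^n_t}_{L^2(\tilde\pi^n_t)}\bigr)\dd t$. First, a routine subsequence argument upgrades the sequential statements of \ref{item:subsol_optimizers_convergence_time} into uniform-in-$t$ convergences $\tilde\pi^n_t\to\pi^0$ in $W_p$ and $W_2(\tilde\pi^n_t,\cdot)\to W_2(\pi^0,\cdot)$ on $\{\rho,\mu_1,\dots,\mu_k\}$ over $t\in[0,\tau_n]$; the $h$-part then tends to $h(\pi^0)$ by continuity and boundedness of $h$. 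For the $g^\dagger_\cE$-part, since $\cE$ is $W_p$-lower semicontinuous (Remark \ref{rmk:lsc}), $-\cE$ enters $g^\dagger_\cE$ with the strictly positive coefficients $a$ and $\partial_i\varphi$, whereas the remaining ingredients are constants or $W_2$-distances that converge along the curve; hence $\limsup_n\sup_{t\le\tau_n}g^\dagger_\cE(\tilde\pi^n_t)\le g^\dagger_\cE(\pi^0)$ and the time average is dominated by this supremum. For the control part, completing the square gives, for each $t$, $-\tfrac12 e^{-t}\vn{\tilde u^n_t}^2_{L^2(\tilde\pi^n_t)}-e^{-\tau_n}\ip{\tilde u^n_t}{\phi^n_t}_{L^2(\tilde\pi^n_t)}\le \tfrac{e^{-\tau_n}}{2}\vn{\phi^n_t}^2_{L^2(\tilde\pi^n_t)}$ (using $e^{-t}\ge e^{-\tau_n}$ on $[0,\tau_n]$), and the triangle inequality in $L^2(\tilde\pi^n_t)$ together with the uniform convergence of the distances and the continuity of $\nabla\varphi$ on the relevant compact set shows that $\vn{\phi^n_t}_{L^2(\tilde\pi^n_t)}$ converges uniformly to $a W_2(\pi^0,\rho)+\nabla\varphi\bigl(\tfrac12 W_2^2(\pi^0,\bm\mu)\bigr)\cdot W_2(\pi^0,\bm\mu)$, whose square equals $2g^\dagger_{W_2}(\pi^0)$. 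Collecting all contributions yields $\Phi^*(\pi^0)\le h(\pi^0)+g^\dagger_\cE(\pi^0)+g^\dagger_{W_2}(\pi^0)=h(\pi^0)+g^\dagger(\pi^0)$, which is \eqref{eqn:visc_subsol_strategy2}; the constant sequence $\pi_n\equiv\pi^0$ then verifies Definition \ref{definition:viscosity_solutions_HJ_sequences}.

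I expect the principal difficulty to be the semicontinuity bookkeeping around $\cE$: since $\cE$ is only lower semicontinuous, the energy term $g^\dagger_\cE$ along the perturbed curves can be controlled only from above, and one must check both that this is exactly the estimate needed (it is, because $-\cE$ carries a strictly positive weight in $g^\dagger_\cE$) and that the opposite-direction semicontinuity of the quadratic Wasserstein terms does no harm, which is ensured by the convergence of distances along the curves established in Proposition \ref{proposition:existence_regularity_minimizer_subsol}\ref{item:subsol_optimizers_convergence_time}. A secondary technical nuisance is the careful matching of the exponential weights $e^{-s}$ versus $e^{-\tau_n}$ in the completed-square estimate, and the upgrade of the pointwise-in-time limits to uniform ones.
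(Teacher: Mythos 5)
Your proposal is correct and follows essentially the same approach as the paper: the same optimizer sequence from Proposition \ref{proposition:existence_regularity_minimizer_subsol}, the same coupled scales $T_n=\tau_n^2$, the same use of the controlled EVI (Proposition \ref{prop:modif_EVI}), and the same final limit passage. The only cosmetic differences are (i) the paper applies Cauchy--Schwarz and Young's inequality to the controlled EVI up front, so that the cross term is already absorbed into the full $g^\dagger=g^\dagger_\cE+g^\dagger_{W_2}$ before integrating (note that $\tfrac12\|\phi^n_t\|^2_{L^2}$ only yields $g^\dagger_{W_2}(\tilde\pi^n_t)$ after a triangle inequality, not by equality as your phrasing momentarily suggests), whereas you complete the square at the end, arriving at exactly the same bound; and (ii) the paper establishes pointwise-in-$s$ upper semicontinuity and then invokes Fatou's lemma, whereas you upgrade the ``arbitrary $t_n\le\tau_n$'' statement of Proposition \ref{proposition:existence_regularity_minimizer_subsol}\ref{item:subsol_optimizers_convergence_time} into a uniform-in-$t$ convergence (valid by a standard contradiction argument) and then pass directly to the limit. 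Both routes are sound; no substantive gap.
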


\begin{proof}
Note that $\Phi^*$ is bounded (since $|\Phi| \leq \|h\|_\infty$, as observed in Corollary \ref{corollary:Phi_usc_admissiblecurves}) and $W_p$-upper semicontinuous, hence $W_2$-upper semicontinuous as well. Thus, according to Definition \ref{definition:viscosity_solutions_HJ_sequences}, we are left to prove that for any $f^\dagger$, $g^\dagger$ as in Definition \ref{definition:H} there exists a sequence $(\pi_n)_{n \in \bN}$ satisfying \eqref{eqn:viscsub1} and \eqref{eqn:viscsub2}.

We claim that, given $(\tilde\pi^n_{\cdot},\tilde u^n_{\cdot})$ and $\pi^0$ as in Proposition \ref{proposition:existence_regularity_minimizer_subsol} \ref{item:subsol_optimizers_convergence_time}, the constant sequence $\pi_n = \pi^0$ is the desired one. Indeed, since by Proposition \ref{proposition:existence_regularity_minimizer_subsol} \ref{item:subsol_optimizers_regularization}
\[
\Phi^*(\pi^0)-f^{\dagger}(\pi^0) = \sup\{\Phi^*-f^\dagger\}\,,
\]
it actually suffices to show \eqref{eqn:viscsub2}, namely
\[
\Phi^*(\pi^0) - g^\dagger(\pi^0) - h(\pi^0) \leq 0.
\]
To this end, start observing that by construction and by virtue of Proposition \ref{proposition:existence_regularity_minimizer_subsol} we have 
\[
\begin{split}
\cA_{\tau_n}(\tilde\pi^n_\cdot,\tilde u^n_\cdot) + e^{-\tau_n} \Phi(\tilde\pi^n_{\tau_n})
-\Phi(\tilde\pi^n_0) \geq O(\tau^2_n) \\
(\Phi(\tilde\pi^n_0) - f^\dagger(\tilde\pi^n_0))-(\Phi(\tilde\pi^n_{\tau_n}) - f^\dagger(\tilde\pi^n_{\tau_n})) \geq O(\tau^2_n)
\end{split}
\]
where $\tau_n := n^{-1/2}$, so that multiplying the second inequality by $e^{-\tau_n}$ and adding it to the first one yields
\begin{equation}\label{eq:lower_bigO}
(e^{-\tau_n}-1)\Phi(\tilde\pi^n_0) + \cA_{\tau_n}(\tilde\pi^n_\cdot,\tilde u^n_\cdot) + e^{-\tau_n}(f^\dagger(\tilde\pi^n_{\tau_n})-f^\dagger(\tilde\pi^n_0)) \geq O(\tau^2_n).
\end{equation}
By Proposition \ref{prop:modif_EVI} and Cauchy-Schwarz inequality we have
\[
\begin{split}
f^\dagger(\tilde\pi^n_{\tau_n}) & - f^\dagger(\tilde\pi^n_0) \leq \int_0^{\tau_n} g^\dagger_{\cE}(\tilde\pi^n_t) \\
& + \int_0^{\tau_n} \|\tilde u_t\|_{L^2(\tilde\pi^n_t)} \Big(a W_2(\tilde\pi^n_t,\rho) + \nabla\varphi \bigg(\frac{1}{2} W^2_2(\tilde\pi^n_t,\bm\mu) \bigg) \cdot W_2(\tilde\pi^n_t,\bm\mu)\Big)\, \dd t,
\end{split}
\]
so that if we apply Young's inequality to the second integral on the right-hand side above, multiply by $e^{-\tau_n}$ and then add $\cA_{\tau_n}(\tilde\pi^n_\cdot,\tilde u^n_\cdot)$ to both sides, we obtain
\[
\begin{split}
\cA_{\tau_n}&(\tilde\pi^n_\cdot,\tilde u^n_\cdot) + e^{-\tau_n} \big( f^\dagger(\tilde\pi^n_{\tau_n}) - f^\dagger(\tilde\pi^n_0)\big) \\
& \leq e^{-\tau_n} \int_0^{\tau_n} g^\dagger(\tilde\pi^n_t)\,\dd t + \int_0^{\tau_n} \frac{1}{2}(e^{-\tau_n}-e^{-t}) \vn{\tilde u_t}^2_{L^2(\tilde\pi^n_t)} \dd t + \int_0^{\tau_n} e^{-t}h(\tilde \pi^n_t)\,\dd t \\
& \leq \int_0^{\tau_n}g^\dagger(\tilde \pi^n_t)\dd t +\int_0^{\tau_n}e^{-t}h(\tilde\pi^n_t)\dd t\,.
\end{split}
\]
By plugging this into \eqref{eq:lower_bigO} and dividing by $\tau_n$ we thus deduce that
\begin{equation}\label{eq:three-on-the-left}
\frac{e^{-\tau_n}-1}{\tau_n}\Phi(\tilde\pi^n_0) + \frac{1}{\tau_n}\int_0^{\tau_n} g^\dagger(\tilde\pi^n_t) \dd t + \frac{1}{\tau_n} \int_0^{\tau_n}e^{-t} h(\tilde\pi^n_t) \dd t \geq O(\tau_n).
\end{equation}
To pass to the limit as $n \to \infty$ in the three terms on the left-hand side above, note that for the first and the third it holds
\[
\begin{split}
\lim_{n \to \infty}\frac{e^{-\tau_n}-1}{\tau_n}\Phi(\tilde\pi^n_0) = -\Phi^*(\pi^0) \qquad \textrm{and} \qquad \limsup_{n \to \infty}\frac{1}{\tau_n}\int_0^{\tau_n}e^{-t}h(\tilde\pi^n_t)\dd t \leq h(\pi^0)
\end{split}
\]
thanks to Proposition \ref{proposition:existence_regularity_minimizer_subsol}\ref{item:subsol_optimizers_convergence_valuefunction} and Proposition \ref{proposition:existence_regularity_minimizer_subsol}\ref{item:subsol_optimizers_convergence_time}, respectively. More precisely, the second claim above holds true because by a change of variable
\[
\frac{1}{\tau_n}\int_0^{\tau_n} e^{-t} h(\tilde\pi^n_t)\dd t = \int_0^1 e^{-\tau_n s} h(\tilde\pi^n_{\tau_n s})\dd s
\]
and by Proposition \ref{proposition:existence_regularity_minimizer_subsol}\ref{item:subsol_optimizers_convergence_time} $W_p(\tilde\pi_{\tau_n s}^n, \pi^0) \to 0$ as $n \to \infty$, for every $s \in [0,1]$. By $W_p$-continuity of $h$, this implies $h(\tilde\pi_{\tau_n s}^n) \to h(\pi^0)$ as $n \to \infty$, for every $s \in [0,1]$, and by the fact that $h$ is bounded we can use Fatou's lemma to conclude as above.


As for the second and last term in \eqref{eq:three-on-the-left}, we argue as for the third one. Observe indeed that a change of variable yields
\[
\frac{1}{\tau_n}\int_0^{\tau_n} g^\dagger(\tilde\pi^n_t)\dd t= \int_0^1 g^\dagger(\tilde\pi^n_{\tau_n s})\dd s
\]
and on the one hand \eqref{eq:patience}, the fact that $W_p(\tilde\pi_{\tau_n s}^n,\pi^0) \to 0$ as $n \to \infty$, for every $s \in [0,1]$, and the $W_p$-lower semicontinuity of $\cE$ (see Remark \ref{rmk:lsc}) together with $\partial_i\varphi >0 $ imply
\[
\limsup_{n \to \infty} g^\dagger(\tilde\pi^n_{\tau_n s}) \leq g^\dagger(\pi^0), \qquad \forall s\in[0,1].
\]
On the other hand, \eqref{eq:uniform-control-distance}, \eqref{eq:reg_subs_5}, and Proposition \ref{proposition:existence_regularity_minimizer_subsol} \ref{item:subsol_optimizers_regularization} imply that
\[
\sup_{n \in \bN,s \in [0,1]} W_2^2(\tilde\pi^n_{\tau_n s},\rho) < +\infty, \qquad \max_{i=1,\ldots,k} \sup_{n \in \bN,s \in [0,1]} W_2^2(\tilde\pi^n_{\tau_n s},\mu_i) < +\infty
\]
and these bounds together with \eqref{eq:quadratic-lower-bound} yield
\[
\inf_{n \in \bN,s \in [0,1]} \cE(\tilde\pi^n_{\tau_n s}) > -\infty,
\]
whence
\[
\sup_{n \in \bN,s \in [0,1]}g^\dagger(\tilde\pi^n_{\tau_n s}) < +\infty.
\]
An application of Fatou's lemma provides us with
\[
\limsup_{n \to \infty} \int_0^1 g^\dagger(\tilde\pi^n_{\tau_n t}) \dd t \leq  g^\dagger(\pi_{0}),
\]
thus concluding the proof of the subsolution property. 

\end{proof}

\section{The supersolution property}\label{sec:supersolution}

In this section we work towards the proof that $\Phi_*$, the lower semicontinuous relaxation of $\Phi$ w.r.t.\ the $W_p$-topology for $p<2$, is a viscosity supersolution for 
\begin{equation} \label{eqn:HJ_supersol_strategy}
f - \lambda H_\ddagger f = h
\end{equation}
for a bounded $W_p$-continuous function $h$. Again, we will work with $\lambda = 1$.

Our strategy, to large extent, will be similar to that of the proof of the subsolution property in Section \ref{sec:subsolution}. Following however the classical proof, one replaces working with an optimally controlled curve by one that is using a control that optimizes Young's inequality as an equality for the specifically chosen test function.

As in the subsolution proof, we pick $(f^\ddagger,g^\ddagger)$ and start with almost optimizers $\pi^n_0$, namely
\begin{equation}
\Phi(\pi^n_0) - f^\ddagger(\pi^n_0) = \inf\{\Phi - f^\ddagger\} + \eps.
\end{equation}
In contrast to the subsolution proof, we do not replace $\pi^n_0$ by $\pi^n$ obtained by following optimally controlled curves, but rather extract $\pi^n$ from the gradient flow for $\cE$ started at $\pi^n_0$ to obtain control on $\cE$.

In Proposition \ref{proposition:existence_regularity_minimizer_supersol}, we show that starting from measures ever closer optimizing $\Phi - f^\ddagger$ and evolving them using the gradient flow, we can find $\pi^n$ that can be shown to have a weak limit point $\pi^0$. Finally in the proof of Proposition \ref{prop:supersolution} we show that $\pi^0$ satisfies 
\begin{equation*}
    \Phi_*(\pi^0) - g^\ddagger(\pi^0) \geq h(\pi^0).
\end{equation*}

{

\begin{proposition} \label{proposition:existence_regularity_minimizer_supersol}
Let
\[
f^{\ddagger}(\mu) := - \frac{a}{2} W_2^2(\mu,\gamma) -\varphi\left(\frac{1}{2}W^2_2(\mu,\bm\pi)\right)
\]
be an admissible test function and fix $1 \leq p < 2$. Then there exists a sequence $(\mu^n)_{n \geq 1}$ converging to some $\mu^0$ w.r.t.\ $W_p$ such that:
\begin{enumerate}[(a)]
\item \label{item:supersol_optimizers_convergence_inf} The sequence $(\mu^n)_{n \geq 1}$ is optimizing for $\inf\{\Phi - f^\ddagger\}$:
\[
\Phi(\mu^n) - f^\ddagger(\mu^n) \leq \inf \left\{\Phi - f^\ddagger \right\} + O\left(T_n\right),
\]
where $T_n = n^{-1}$. In particular,
\begin{equation} \label{eqn:supersol_optimizers_convergence_inf_nocontrol}
\lim_{n \to \infty} \{\Phi(\mu^n) - f^\ddagger(\mu^n)\} = \inf\{\Phi - f^\ddagger\}.
\end{equation}
\item \label{item:supersol_optimizers_regularization} The limit point $\mu^0$ is optimal for $\inf\{\Phi - f^\ddagger\}$ and $\inf\{\Phi_* - f^\ddagger\}$, where $\Phi_*$ denotes the lower semicontinuous relaxation of $\Phi$ w.r.t.\ the $W_p$ topology:
\[
\Phi_*(\mu^0) - f^\ddagger(\mu^0) = \inf\{\Phi_* - f^\ddagger\} = \inf\{\Phi - f^\ddagger\}.
\]
Moreover, $\mu^0 \in \cD(\cE)$.
\item \label{item:supersol_optimizers_convergence_metric} We have convergence of the metric:
\[
\lim_{n \to \infty} W_2(\mu^n,\gamma) = W_2(\mu^0,\gamma) \qquad \textrm{and} \qquad \lim_{n \to \infty} W_2(\mu^n,\pi_i) = W_2(\mu^0,\pi_i),
\]
for all $i=1,\dots,k$.
\item \label{item:supersol_optimizers_convergence_valuefunction} We have convergence of the value function:
\[
\lim_{n \to \infty} \Phi(\mu^n) = \Phi_*(\mu^0).
\]
\item \label{item:supersol_optimizers_convergence_time} Let $t_n \leq T_n$, $\psi \in \mathcal{C}_c^\infty(\R^d)$ and let $(\mu^n_\cdot,u_\cdot) \in \adm_\infty$ be the admissible curve associated to the control $u_t(x) := \nabla\psi(x)$ and initial point $\mu^n$. Then $\mu^n_{t_n}$ converges to $\mu^0$ w.r.t.\ $W_p$ as $n \to \infty$ and
\[
\lim_{n \to \infty} W_2(\mu^n_{t_n},\gamma) = W_2(\mu^0,\gamma) \qquad \textrm{and} \qquad \lim_{n \to \infty} W_2(\mu^n_{t_n},\pi_i) = W_2(\mu^0,\pi_i),
\]
for all $i=1,...,k$.
\end{enumerate}
\end{proposition}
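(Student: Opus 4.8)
The proof parallels that of Proposition~\ref{proposition:existence_regularity_minimizer_subsol}, with one structural change forced by the sign of the test function: since $f^\ddagger$ enters with a minus sign, the quantity $\Phi-f^\ddagger=\Phi+\tfrac a2W_2^2(\cdot,\gamma)+\varphi(\tfrac12W_2^2(\cdot,\bm\pi))$ is coercive and we look for a \emph{minimising} sequence, while the regularisation of the energy can no longer be extracted from an almost optimally controlled curve and must instead be produced by running the $\EVI_\kappa$-gradient flow. I would start, exactly as in the subsolution case, by fixing $1\le p<2$ and picking $\mu^n_0$ with $\Phi(\mu^n_0)-f^\ddagger(\mu^n_0)\le\inf\{\Phi-f^\ddagger\}+T_n$, $T_n=n^{-1}$. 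Boundedness of $\Phi$ (Corollary~\ref{corollary:Phi_usc_admissiblecurves}) together with the lower bound on $\varphi$ (continuity plus $\partial_i\varphi>0$) forces $\sup_n W_2(\mu^n_0,\gamma)<+\infty$, hence $W_2$-boundedness of $(\mu^n_0)$ relative to every fixed reference point, in particular to $\gamma$, the $\pi_i$'s and the point $\nu$ fixed in \eqref{eq:quadratic-lower-bound}.

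The core of the argument — and the step I expect to be the main obstacle — is to replace $\mu^n_0$ by $\mu^n:=\sfS_{s_n}\mu^n_0$ for a carefully chosen $s_n\downarrow0$. Running the flow for a short time regularises the internal energy: under Assumption~\ref{ass: OT energy functional} the density of $\sfS_{s_n}\mu^n_0$ is controlled, so that $\mathcal U(\sfS_{s_n}\mu^n_0)$ is finite and controlled in terms of $s_n$, while the potential and interaction parts stay bounded because $(\mu^n_0)$ is $W_2$-bounded; combined with the monotonicity \eqref{eq:monotonicity-entropy} this provides the energy control on $(\mu^n)$ needed in (b) and (e). At the same time $s_n$ must be taken small enough that near-optimality survives: by Corollary~\ref{corollary:Phi_usc_admissiblecurves} one has $\Phi(\mu^n)\le\Phi(\mu^n_0)+2s_n\|h\|_\infty$, and by $W_2$-continuity of $f^\ddagger$ together with the bound $W_2(\sfS_{s_n}\mu^n_0,\mu^n_0)\to0$ (controlled via the regularising estimates for the flow) one gets $|f^\ddagger(\mu^n)-f^\ddagger(\mu^n_0)|\to0$, so that $\Phi(\mu^n)-f^\ddagger(\mu^n)\le\inf\{\Phi-f^\ddagger\}+O(T_n)$; this is item (a), which also yields \eqref{eqn:supersol_optimizers_convergence_inf_nocontrol}. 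Delicately balancing the energy blow-up rate of the flow as $s_n\to0$ against the error it introduces in $\Phi-f^\ddagger$, so that $\mu^0$ still lands in $\cD(\cE)$, is where the real work lies.

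Once $(\mu^n)$ is in hand, the remaining items follow the subsolution blueprint. By Lemma~\ref{lemma:control_controlled_paths}\ref{item:lemma:control_controlled_paths_metriccontrol} applied with $u\equiv0$ and the bound of the first paragraph, $(\mu^n)$ is $W_2$-bounded, hence $W_p$-relatively compact by Lemma~\ref{lemma:Wp-compactness}; I pass to a $W_p$-convergent, non-relabelled subsequence with limit $\mu^0$, and the $W_p$-lower semicontinuity of $\cE$ (Remark~\ref{rmk:lsc}) with the energy control gives $\mu^0\in\cD(\cE)$. As in the subsolution proof, $-f^\ddagger$ is $W_p$-lower semicontinuous (sum of $\tfrac a2W_2^2(\cdot,\gamma)$ and $\varphi$ composed with the $W_p$-lsc map $\tfrac12W_2^2(\cdot,\bm\pi)$, using $\partial_i\varphi>0$), so $\Phi_*-f^\ddagger$ is $W_p$-lsc and $\inf\{\Phi-f^\ddagger\}=\inf\{\Phi_*-f^\ddagger\}$; chasing
\[
(\Phi_*-f^\ddagger)(\mu^0)\le\liminf_n(\Phi_*-f^\ddagger)(\mu^n)\le\liminf_n(\Phi-f^\ddagger)(\mu^n)\le\inf\{\Phi-f^\ddagger\}=\inf\{\Phi_*-f^\ddagger\}
\]
forces all inequalities to be equalities, which is item (b). From the resulting equalities and the one-sided semicontinuity of $\Phi_*$ and $f^\ddagger$ one extracts $\lim_n\Phi(\mu^n)=\Phi_*(\mu^0)$ (item (d)) and $\lim_nf^\ddagger(\mu^n)=f^\ddagger(\mu^0)$; the latter, strict monotonicity of $\varphi$, and Lemma~\ref{lemma:upgrade_semicontinuity_composition} give the metric convergences of item (c).

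For item (e), the admissible curve $(\mu^n_\cdot,u_\cdot)$ generated by the constant-in-time control $u_t(x)=\nabla\psi(x)$ with $\psi\in\cC^\infty_c(\R^d)$ has $\|u_t\|^2_{L^2(\mu^n_t)}\le\|\nabla\psi\|^2_\infty$ uniformly, so Lemma~\ref{lemma:control_controlled_paths}\ref{item:lemma:control_controlled_paths_metriccontrol} with $\rho=\mu^n$ gives $W_2^2(\mu^n_{t_n},\mu^n)\le(M_n+\|\nabla\psi\|^2_\infty)\,t_n$ with $M_n=\max\{\cE(\mu^n)+c_1W_2^2(\mu^n,\nu)+c_2,0\}$; the energy control on $(\mu^n)$ and its $W_2$-boundedness make $M_n t_n\to0$ since $t_n\le T_n\to0$, hence $W_2(\mu^n_{t_n},\mu^n)\to0$. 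Then $\mu^n_{t_n}\to\mu^0$ in $W_p$ via $W_p\le W_2$ and the triangle inequality, and the metric convergences follow from $W_2(\mu^n_{t_n},\gamma)\le W_2(\mu^n_{t_n},\mu^n)+W_2(\mu^n,\gamma)$ together with the reverse inequality coming from $W_p$-lower semicontinuity of $W_2(\cdot,\gamma)$, and analogously for the $\pi_i$'s. The only genuinely new ingredient compared with Section~\ref{sec:subsolution} is the gradient-flow regularisation of the second paragraph; everything else transcribes the subsolution argument with infima in place of suprema and $\Phi_*$ in place of $\Phi^*$.
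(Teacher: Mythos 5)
You correctly diagnose the structural change (energy regularization via gradient flow rather than via controlled curves), and items (b)--(e) transcribe faithfully, but the two places you flag as ``the real work'' are precisely the places where your mechanism does not close, and the paper closes them by a different device.

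\smallskip
\noindent\textbf{The energy bound is not obtained by parabolic smoothing.} You propose to control $\cE(\mu^n)$ via the regularizing effect of the flow (``the density of $\sfS_{s_n}\mu^n_0$ is controlled''). The regularizing estimate that $\EVI_\kappa$ gives, namely $\cE(\sfS_s\mu) \le \cE(\nu) + \tfrac{1}{2s}W_2^2(\mu,\nu) - \tfrac{\kappa}{2}W_2^2(\sfS_s\mu,\nu)$, only yields $\cE(\sfS_{s_n}\mu^{0,n}) = O(1/s_n)$, and no choice of $s_n\downarrow 0$ removes the blow-up: if $s_n$ is large enough to give a uniform energy bound it is too slow to make $f^\ddagger(\sfS_{s_n}\mu^n_0)-f^\ddagger(\mu^n_0) = O(T_n)$, and vice versa. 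Monotonicity \eqref{eq:monotonicity-entropy} does not help either, since it goes the wrong way when $\cE(\mu^n_0) = +\infty$. The paper instead extracts the energy bound from the \emph{near-optimality of $\mu^{0,n}$}: since $\mu^n = \mu^{0,n}_{T_n}$ is a competitor, $\Phi(\mu^{0,n}) - \Phi(\mu^n) \le f^\ddagger(\mu^{0,n}) - f^\ddagger(\mu^n) + T_n$. The left side is $\ge -2T_n\|h\|_\infty$ by Corollary \ref{corollary:Phi_usc_admissiblecurves}; the right side is bounded, via the controlled EVI \eqref{eq:refined2} with $u\equiv 0$ and the monotonicity of $\cE$ along the flow, by $T_n(M - a\cE(\mu^n)) + T_n$. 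Dividing by $T_n$ traps $\cE(\mu^n)$ from above uniformly in $n$. This trick is the core of the proof and is not present in your sketch; without it the claim $\mu^0 \in \cD(\cE)$ in item (b) is not established.

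\smallskip
\noindent\textbf{Item (a) cannot rest on $W_2(\sfS_{s_n}\mu^n_0,\mu^n_0)\to 0$.} A quantitative rate for $W_2(\sfS_t\mu,\mu)$ as $t\downarrow 0$ (e.g.\ via \eqref{eq:asymptotics}) requires $\mu\in\cD(|\partial\cE|)$, which is unavailable for $\mu^{0,n}$ --- indeed nothing forces $\mu^{0,n}\in\cD(\cE)$. The paper circumvents this by comparing $W_2(\mu^n,\gamma)$ to $W_2(\mu^{0,n},\gamma)$ using the contraction property \eqref{eq:contraction} of the $\EVI_\kappa$ semigroup and then applying the small-time asymptotics \eqref{eq:asymptotics} \emph{to $\gamma$ itself} (legitimate since $\gamma\in\cD(\cI)$ by the definition of an admissible test function), not to $\mu^{0,n}$. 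This gives $f^\ddagger(\mu^{0,n}) - f^\ddagger(\mu^n) \le O(T_n)$ without any regularity on the starting points.

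With these two substitutions --- the near-optimality-plus-controlled-EVI argument for the energy bound, and the contraction-plus-asymptotics-at-$\gamma$ argument for (a) --- the rest of your proposal matches the paper.
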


\begin{proof}
\underline{Preliminary facts.} For $n\geq1$, let $\mu^{0,n}$ be such that 
\begin{equation}\label{eq:almost-Tn}
\Phi(\mu^{0,n}) - f^{\ddagger}(\mu^{0,n}) \leq \inf\{\Phi - f^{\ddagger}\} + T_n,
\end{equation}
where $T_n := 1/n$ as in the statement. Since $\Phi$ and $\varphi$ are lower bounded, namely $\Phi \geq -\|h\|_\infty$ and $\varphi(x_1,...,x_k) \geq \varphi(0,...,0)$ thanks to the fact that $\partial_i\varphi > 0$, we deduce that $\sup_n W_2(\mu^{0,n},\gamma) < +\infty$. By Lemma \ref{lemma:Wp-compactness} this would be enough to infer the existence of a $W_p$-convergent subsequence and a $W_p$-limit. However, no information on the energy of $\mu^{0,n}$ can be deduced from \eqref{eq:almost-Tn}. 

For this reason, we let each $\mu^{0,n}$ evolve along the gradient flow $(\mu_t^{0,n})_{t \geq 0}$ of $\cE$ and set $\mu^n := \mu^{0,n}_{T_n}$. Applying Lemma \ref{lemma:control_controlled_paths} \ref{item:lemma:control_controlled_paths_metriccontrol} with $\rho=\gamma$ to $(\mu_t^{0,n})_{t \geq 0}$ yields
\begin{equation}\label{eq:reg_super_3}
\sup_{n \in \bN} \sup_{0\leq t\leq T_n} W_2(\mu^{0,n}_t,\gamma) < +\infty.
\end{equation}
The sequence $(\mu^n)_{n \geq 1}$ is thus bounded in $(\cP_2(\R^d),W_2)$, so that by Lemma \ref{lemma:Wp-compactness} we can find $\mu^0$ and a (non-relabeled) subsequence such that $W_p(\mu^n,\mu^0) \to 0$ as $n \to \infty$. 

Moreover, $(\mu^n)_{n \geq 1}$ is also bounded in energy (which will be useful for later purposes). To prove this, note that by construction we have
\begin{equation}\label{eq:reg_super_4}
\Phi(\mu^{0,n})-\Phi(\mu^{0,n}_{T_n}) \leq f^\ddagger(\mu^{0,n}) - f^\ddagger(\mu^{0,n}_{T_n}) + T_n.
\end{equation}
As concerns the left-hand side, from Corollary \ref{corollary:Phi_usc_admissiblecurves} we know that
\begin{equation}\label{eq:reg_super_1}
\Phi(\mu^{0,n}) - \Phi(\mu^{0,n}_{T_n}) \geq -2T_n\|h\|_{\infty}.
\end{equation}
On the other hand, by \eqref{eq:reg_super_3} there exists some constant $0 < M < +\infty$ (whose numerical value changes from the first to the second line) independent of $n$ and $t \in [0,T_n]$ such that
\[
\begin{split}
-g^{\ddagger}_{\cE}(\mu_t^{0,n}) & \leq M - a\cE(\mu_t^{0,n}) - \nabla\varphi\bigg(\frac12 W_2^2(\mu_t^{0,n},\bm\pi)\bigg)\cdot \cE(\mu_t^{0,n})\bm{1} \\
& \leq M -a\cE(\mu_{T_n}^{0,n}) = M - a\cE(\mu^n),
\end{split}
\]
where in the second inequality we used the monotonicity \eqref{eq:monotonicity-entropy} of $\cE$ along the gradient flow and again \eqref{eq:reg_super_3} together with $\partial_i\varphi > 0$, \eqref{eq:quadratic-lower-bound} and the triangle inequality to bound the last term in the first line. From this control and Proposition \ref{prop:modif_EVI} we deduce that the right-hand side of \eqref{eq:reg_super_4} can be controlled from above as
\begin{equation}\label{eq:reg_super_2}
f^\ddagger(\mu^{0,n}) - f^\ddagger(\mu^{0,n}_{T_n}) \leq \int_{0}^{T_n}-g^{\ddagger}_{\cE}(\mu^{0,n}_t)\,\dd t \leq T_n(M-a\cE(\mu^{n})).
\end{equation}
Combining \eqref{eq:reg_super_1} and \eqref{eq:reg_super_2} with \eqref{eq:reg_super_4} we find
\begin{equation*}
-2T_n\|h\|_{\infty}T^{-1}_{n} \leq M - a\cE(\mu^{n}_{T_n}),
\end{equation*}
whence the desired uniform bound
\begin{equation}\label{eq:bounded-energy}
\sup_{n \in \bN}\cE(\mu^{n})<+\infty.
\end{equation}
Now, let us show that the proposed sequence enjoys properties \ref{item:supersol_optimizers_convergence_inf}-\ref{item:supersol_optimizers_convergence_time}. 

\medskip

\noindent\underline{Proof of \ref{item:supersol_optimizers_convergence_inf}.} Observe that the contraction property \eqref{eq:contraction} for $\EVI_\kappa$-gradient flows gives
\[
\begin{split}
W_2^2(\mu^n,\gamma) & \leq W_2^2(\mu^{0,n}_{T_n},\gamma_{T_n}) +  2W_2(\mu^{0,n}_{T_n},\gamma_{T_n})W_2(\gamma_{T_n},\gamma)+W_2^2(\gamma_{T_n},\gamma)\\
&\leq e^{-2\kappa T_n}W_2^2(\mu^{0,n},\gamma)+ 2e^{-\kappa T_n}W_2(\mu^{0,n},\gamma)W_2(\gamma_{T_n},\gamma)+W_2^2(\gamma_{T_n},\gamma),
\end{split}
\]
so that thanks to \eqref{eq:reg_super_3} and \eqref{eq:asymptotics} with $\mu=\nu=\gamma$ (which is possible, since $\gamma \in \cD(\cI)$ by assumption) we obtain $W_2^2(\mu^n,\gamma)-W_2^2(\mu^{0,n},\gamma) \leq O(T_n)$. Repeating the same argument for $\pi_i$, $i=1,\dots,k$, yields $W_2^2(\mu^n,\pi_i)-W_2^2(\mu^{0,n},\pi_i) \leq O(T_n)$ and combining all these bounds we get
\[
f^\ddagger(\mu^{0,n}) - f^\ddagger(\mu^n) \leq O(T_n).
\]
It is now sufficient to recall that from Corollary \ref{corollary:Phi_usc_admissiblecurves} it holds
\[
\Phi(\mu^{0,n}) - \Phi(\mu^n) \geq -2T_n\|h\|_{\infty}
\]
and thus if we further combine this bound with the previous one, we find
\[
\Phi(\mu^n)-f^{\ddagger}(\mu^n) \leq \Phi(\mu^{0,n})-f^{\ddagger}(\mu^{0,n})+ O(T_n),
\]
from which \ref{item:supersol_optimizers_convergence_inf} follows. 

\medskip

\noindent\underline{Proof of \ref{item:supersol_optimizers_regularization}.} Arguing similarly to Proposition \ref{proposition:existence_regularity_minimizer_subsol} \ref{item:subsol_optimizers_regularization}, we observe that the $W_p$-upper semicontinuity of $f^\ddagger$ implies 
\[
\begin{split}
(\Phi_* - f^\ddagger)(\mu^0) & \leq \liminf_{n \to \infty} (\Phi_* - f^\ddagger)(\mu^n) \\
& \leq \liminf_{n \to \infty} (\Phi - f^\ddagger)(\mu^n) \leq \limsup_{n \to \infty} (\Phi - f^\ddagger)(\mu^n) \\
&\stackrel{\ref{item:supersol_optimizers_convergence_inf}}{\leq} \inf\{\Phi-f^\ddagger\} = \inf (\Phi-f^\ddagger)_*= \inf\{\Phi_*-f^\ddagger\}.
\end{split}
\]
The fact that $\mu^0 \in \cD(\cE)$ is a consequence of \eqref{eq:bounded-energy}, the fact that $W_p(\mu^n,\mu^0) \to 0$ and the $W_p$-lower semicontinuity of $\cE$, discussed in Remark \ref{rmk:lsc}.

\medskip

\noindent\underline{Proof of \ref{item:supersol_optimizers_convergence_valuefunction}.} All the inequalities in the above display are in fact identities, whence the first of the following relations
\begin{equation*}
\begin{cases}
\displaystyle{\lim_{n \to \infty}(\Phi-f^\ddagger)(\mu^n) = (\Phi-f^\dagger)(\mu^0) = (\Phi_*-f^\dagger)(\mu^0)}\\
\displaystyle{\liminf_{n \to \infty} \Phi(\mu^n) \geq \Phi_*(\mu^0)} \\
\displaystyle{\liminf_{n \to \infty} -f^{\ddagger}(\mu^n) \geq -f^{\ddagger}(\mu^0)}
\end{cases}
\end{equation*}
while the remaining ones are due to the $W_p$-lower semicontinuity of $\Phi_*$ and $-f^\ddagger$. Combining the three, and up to extracting a non-relabeled subsequence, we obtain
\[
\lim_{n \to \infty} \Phi(\mu^n) = \Phi(\mu^0) \qquad \textrm{and} \qquad \lim_{n \to \infty} -f^\ddagger(\mu^n) = -f^\ddagger(\mu^0)
\]
and the first limit shows \ref{item:supersol_optimizers_convergence_valuefunction}.

\medskip

\noindent\underline{Proof of \ref{item:supersol_optimizers_convergence_metric}.} It is a consequence of $f^\ddagger(\mu^n) \to f^\ddagger(\mu^0)$ and Lemma \ref{lemma:upgrade_semicontinuity_composition}. 

\medskip

\noindent\underline{Proof of \ref{item:supersol_optimizers_convergence_time}.} 
First of all, let us stress that, given the control $u_t(x) := \nabla\psi(x)$ and the initial condition $\mu^n$, there exists a curve $\mu^n_\cdot$ such that $(\mu_\cdot,u_\cdot) \in \admerg$ and $\mu_0 = \mu^n$: it is the $\EVI$-gradient flow of $\cE(\mu) + \int\psi\,\dd\mu$. Its existence is ensured by \cite[Theorems 11.2.1 and 11.2.8]{AmGiSa08}, the standing assumptions on $\cE$ and the fact that $\nabla^2\psi \geq \kappa'{\rm Id}$ for some $\kappa' \in \R$. Let us point out that \cite[Theorem 11.2.8]{AmGiSa08} also grants that $(\mu_\cdot,u_\cdot)$ is admissible in the sense of Definition \ref{def:admissible}.

With this said, let us now prove the first part of the statement, namely the fact that $W_p(\mu^n_{t_n},\mu^0) \to 0$. We observe that by H\"older inequality $W_p \leq W_2$ for any $p<2$, so that
\[
W_p(\mu^n_{t_n},\mu^0) \leq W_2(\mu^n_{t_n},\mu^n) + W_p(\mu^n,\mu^0).
\]
As by construction $W_p(\mu^n,\mu^0) \to 0$, we focus our attention on the first term on the right-hand side. By applying Lemma \ref{lemma:control_controlled_paths} \ref{item:lemma:control_controlled_paths_metriccontrol} with $\rho = \mu^n$, it can be estimated from above as
\[
\begin{split}
W_2^2(\mu^n_{t_n},\mu^n) & \leq  \frac{2M_n}{\alpha}(1-e^{-\alpha t_n}) + \int_0^{t_n}e^{-\alpha(t_n-s)} \|\nabla\psi\|_{L^2(\mu_s^n)}^2 \dd s \\
& \leq (2M_n+\|\nabla\psi\|_\infty)\frac{1-e^{-\alpha t_n}}{\alpha}
\end{split}
\]
with $M_n = \max\{\cE(\mu^n) + c_1 W_2^2(\mu^n,\nu) + c_2,0\}$ and $c_1,c_2,\nu$ as in \eqref{eq:quadratic-lower-bound}. Since $(\mu^n)_{n \geq 1}$ is bounded in distance and in energy by the discussion carried out at the beginning of the proof, we deduce that $\sup_{n \geq 1} M_n<+\infty$ and thus $W_2(\mu_{t_n}^n,\mu^n) \to 0$ as $n \to \infty$. To prove the second part of the statement observe that
\[
W_2(\mu^n_{t_n},\gamma) \leq W_2(\mu^n_{t_n},\mu^n)+W_2(\mu^n,\gamma).
\]
By \ref{item:supersol_optimizers_convergence_metric} we know that $\lim_n W_2(\mu^n,\gamma) = W_2(\mu^0,\gamma)$ and we have already shown that $\lim_n W_2(\mu^n_{t_n},\mu^n)=0$, so that passing to the limit in the above display yields $\limsup_n W_2(\mu^n_{t_n},\gamma) \leq W_2(\mu^0,\gamma)$. By the lower semicontinuity of $W_2$ w.r.t.\ $W_p$ convergence (see Lemma \ref{lemma:Wp-compactness}) it also holds $\liminf_n W_2(\mu^n_{t_n},\gamma) \geq W_2(\mu^0,\gamma)$, whence $\lim_n W_2(\mu^n_{t_n},\gamma) = W_2(\mu^0,\gamma)$. The convergence of $W_2(\mu^n_{t_n},\pi_i)$ follows along the very same lines.
\end{proof}

We now have all the ingredients to prove that $\Phi_*$ is a viscosity supersolution to \eqref{eqn:HJ_supersol_strategy}, as anticipated at the beginning of the section.

\begin{proposition}\label{prop:supersolution}
Under Assumption \ref{ass: OT energy functional}, let $h \in C_b(\cP_p(\R^d))$ for some $p<2$. Then $\Phi_*$, the lower semicontinuous relaxation w.r.t.\ the $W_p$-topology of the value function $\Phi$, defined in \eqref{def:value_fun} is a viscosity supersolution of the Hamilton-Jacobi equation \eqref{eqn:differential_equation_tildeH2} where $h^\ddag = h$ and $A_\ddag = H_\ddag$ as defined in Definition \ref{definition:H}.
\end{proposition}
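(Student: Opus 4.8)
The plan is to show that, for every $(f^\ddagger,g^\ddagger)\in H_\ddagger$, the \emph{constant} sequence $\pi_n\equiv\mu^0$ — with $\mu^0$ the limit point produced by Proposition~\ref{proposition:existence_regularity_minimizer_supersol} — meets the requirements of Definition~\ref{definition:viscosity_solutions_HJ_sequences}. Boundedness and lower semicontinuity of $\Phi_*$ are immediate ($|\Phi|\le\|h\|_\infty$), and \eqref{eqn:viscsup1} holds verbatim since $\Phi_*(\mu^0)-f^\ddagger(\mu^0)=\inf\{\Phi_*-f^\ddagger\}$ by Proposition~\ref{proposition:existence_regularity_minimizer_supersol}\ref{item:supersol_optimizers_regularization}. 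Hence the whole statement reduces to the pointwise inequality $\Phi_*(\mu^0)\ge g^\ddagger(\mu^0)+h(\mu^0)$. Since $\mu^0\in\cD(\cE)$ (same item) and $U$ is superlinear, $\mu^0\ll\cL^d$, so the maps $\botmap{\mu^0}{\gamma},\botmap{\mu^0}{\pi_i}$ exist and the vector field
\[
w^0:=a\,\botmap{\mu^0}{\gamma}+\nabla\varphi\!\left(\tfrac12 W_2^2(\mu^0,\bm\pi)\right)\cdot\botmap{\mu^0}{\bm\pi}
\]
— precisely the one paired with the control in the controlled EVI \eqref{eq:refined2} — belongs to $L^2(\mu^0)$ and in fact to $\overline{\{\nabla\psi:\psi\in\cC_c^\infty(\R^d)\}}^{L^2(\mu^0)}$. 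Following the classical recipe recalled at the start of the section, I would feed the curves a \emph{nearly optimal} control: fix $\delta>0$ and choose $\psi_\delta\in\cC_c^\infty(\R^d)$ with $\|\nabla\psi_\delta-w^0\|_{L^2(\mu^0)}<\delta$.

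With $\psi_\delta$ fixed, from each $\mu^n$ run the admissible curve $(\mu^n_\cdot,u_\cdot)$, $u_t\equiv\nabla\psi_\delta$, of Proposition~\ref{proposition:existence_regularity_minimizer_supersol}\ref{item:supersol_optimizers_convergence_time} for a short time $t_n\downarrow0$ chosen so that the $O(n^{-1})$ optimality gap is negligible after dividing by $t_n$ (e.g.\ $t_n=n^{-1/2}$; the convergences in that item persist along any vanishing $t_n$). Combining the Dynamic Programming Principle (Proposition~\ref{proposition:DPP}), which for this competitor gives $\Phi(\mu^n)\ge\cA_{t_n}(\mu^n_\cdot,u_\cdot)+e^{-t_n}\Phi(\mu^n_{t_n})$, with the two-sided optimality estimates $\Phi(\mu^n)-f^\ddagger(\mu^n)\le\inf\{\Phi-f^\ddagger\}+O(n^{-1})$ and $\Phi(\mu^n_{t_n})-f^\ddagger(\mu^n_{t_n})\ge\inf\{\Phi-f^\ddagger\}$, and bounding $f^\ddagger(\mu^n_{t_n})-f^\ddagger(\mu^n)$ from below by \eqref{eq:refined2}, one obtains — after absorbing the $e^{-t}$ factors into $O(t_n)$ errors and dividing by $t_n$ — the mirror image of \eqref{eq:three-on-the-left}:
\[
\tfrac{1}{t_n}\!\int_0^{t_n}\!\! e^{-t}h(\mu^n_t)\,\dd t+\tfrac{e^{-t_n}}{t_n}\!\int_0^{t_n}\!\!\Big(g^\ddagger_\cE(\mu^n_t)+\ip{\nabla\psi_\delta}{w^n_t}_{L^2(\mu^n_t)}-\tfrac12\|\nabla\psi_\delta\|^2_{L^2(\mu^n_t)}\Big)\dd t+\tfrac{e^{-t_n}-1}{t_n}\Phi(\mu^n)\le o(1),
\]
where $w^n_t:=a\,\botmap{\mu^n_t}{\gamma}+\nabla\varphi(\tfrac12W_2^2(\mu^n_t,\bm\pi))\cdot\botmap{\mu^n_t}{\bm\pi}$; here Young's inequality enters only to record that $\ip{\nabla\psi_\delta}{w}-\tfrac12\|\nabla\psi_\delta\|^2$ equals $\tfrac12\|w\|^2$ up to $O(\delta)$, rather than as an inequality.

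Next I would let $n\to\infty$ via the substitution $t=t_ns$. Proposition~\ref{proposition:existence_regularity_minimizer_supersol}\ref{item:supersol_optimizers_convergence_valuefunction} gives $\Phi(\mu^n)\to\Phi_*(\mu^0)$, and \ref{item:supersol_optimizers_convergence_time} gives $\mu^n_{t_ns}\to\mu^0$ in $W_p$ with $W_2(\mu^n_{t_ns},\gamma)\to W_2(\mu^0,\gamma)$ and $W_2(\mu^n_{t_ns},\pi_i)\to W_2(\mu^0,\pi_i)$ for every $s\in[0,1]$. Hence the $h$-term tends to $h(\mu^0)$ by dominated convergence ($h$ bounded and $W_p$-continuous), $\|\nabla\psi_\delta\|^2_{L^2(\mu^n_{t_ns})}\to\|\nabla\psi_\delta\|^2_{L^2(\mu^0)}$, $\liminf_ng^\ddagger_\cE(\mu^n_{t_ns})\ge g^\ddagger_\cE(\mu^0)$ (the metric terms converge, the $\cE$-terms are bounded, $W_p$-lower semicontinuous and carry coefficients converging to positive limits, see Remark~\ref{rmk:lsc}), and — the one genuinely new point — $\ip{\nabla\psi_\delta}{w^n_{t_ns}}_{L^2(\mu^n_{t_ns})}\to\ip{\nabla\psi_\delta}{w^0}_{L^2(\mu^0)}$: to see this I would rewrite $\int\ip{\nabla\psi_\delta}{\botmap{\mu^n_{t_ns}}{\gamma}}\,\dd\mu^n_{t_ns}=-\int\ip{\nabla\psi_\delta\circ\otmap{\gamma}{\mu^n_{t_ns}}}{\botmap{\gamma}{\mu^n_{t_ns}}}\,\dd\gamma$ through the unique optimal plan (unique as $\gamma\ll\cL^d$) and invoke Lemma~\ref{lem:continuity_map}, whose hypotheses are exactly these weak and metric convergences, together with the Lipschitzianity of $\nabla\psi_\delta$ (the $\pi_i$-terms being identical). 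All integrands are uniformly bounded below by the metric and energy bounds of Lemma~\ref{lemma:control_controlled_paths}\ref{item:lemma:control_controlled_paths_metriccontrol} and \eqref{eq:bounded-energy} (monotonicity \eqref{eq:monotonicity-entropy} of $\cE+\int\psi_\delta\,\dd\cdot$ along its gradient flow giving the uniform \emph{upper} bound on $\cE$), so Fatou applies and yields $\Phi_*(\mu^0)\ge h(\mu^0)+g^\ddagger_\cE(\mu^0)+\ip{\nabla\psi_\delta}{w^0}_{L^2(\mu^0)}-\tfrac12\|\nabla\psi_\delta\|^2_{L^2(\mu^0)}$. Letting $\delta\downarrow0$ turns the last two terms into $\tfrac12\|w^0\|^2_{L^2(\mu^0)}$, and an elementary inequality — split according to whether $aW_2(\mu^0,\gamma)$ dominates $\nabla\varphi(\cdots)\cdot W_2(\mu^0,\bm\pi)$, in which case the reverse triangle inequality gives $\|w^0\|^2_{L^2(\mu^0)}\ge\big(aW_2(\mu^0,\gamma)-\nabla\varphi(\cdots)\cdot W_2(\mu^0,\bm\pi)\big)^2$, or not, in which case $g^\ddagger_{W_2}(\mu^0)\le0$ — shows $g^\ddagger_{W_2}(\mu^0)\le\tfrac12\|w^0\|^2_{L^2(\mu^0)}$; therefore $g^\ddagger(\mu^0)=g^\ddagger_\cE(\mu^0)+g^\ddagger_{W_2}(\mu^0)\le g^\ddagger_\cE(\mu^0)+\tfrac12\|w^0\|^2_{L^2(\mu^0)}$ and the desired inequality follows.

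The main obstacle, as anticipated in the introductory remarks of this section, is twofold: one may only use smooth compactly supported controls $\nabla\psi_\delta$, which merely approximate the true optimizer $w^0$ of the formal Young step — this forces the outer limit $\delta\to0$ — and one must keep both $\cE$ and the optimal transport maps under control along the approximating curves $\mu^n_\cdot$ as $n\to\infty$. The controlled EVI of Proposition~\ref{prop:modif_EVI} (through Lemma~\ref{lemma:control_controlled_paths}) supplies the energy and metric bounds, while Lemma~\ref{lem:continuity_map} is exactly what allows the passage to the limit in the cross term $\ip{\nabla\psi_\delta}{w^n_\cdot}$; this last step, resting on the continuity of transport maps out of a fixed absolutely continuous measure, is where the assumption $\gamma\in\cD(\cI)$ is used in an essential way.
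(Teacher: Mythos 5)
Your proposal is correct and follows essentially the same route as the paper's proof: use Proposition~\ref{proposition:existence_regularity_minimizer_supersol} to produce $\mu^0$, run admissible curves from $\mu^n$ with a smooth compactly supported control $\nabla\psi$, combine the dynamic programming principle with the controlled EVI of Proposition~\ref{prop:modif_EVI}, pass to the limit $n\to\infty$ using Proposition~\ref{proposition:existence_regularity_minimizer_supersol}\ref{item:supersol_optimizers_convergence_time}, Lemma~\ref{lem:continuity_map} and Fatou, and finally approximate the vector field $w^0 = a\,\botmap{\mu^0}{\gamma}+\nabla\varphi(\cdot)\cdot\botmap{\mu^0}{\bm\pi}$ by gradients of test functions via \cite[Proposition 8.5.2]{AmGiSa08}, concluding with a Cauchy--Schwarz bound showing $\tfrac12\|w^0\|^2_{L^2(\mu^0)}\ge g^\ddagger_{W_2}(\mu^0)$. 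The only deviations are cosmetic: you fix $\psi_\delta$ with $\|\nabla\psi_\delta-w^0\|_{L^2(\mu^0)}<\delta$ and take $\delta\to 0$ at the end, whereas the paper first establishes the inequality \eqref{eq:corduroy_init} for arbitrary $\psi\in\cC_c^\infty(\R^d)$ and then sends $\psi_m\to w^0$; your final estimate proceeds by a case split on whether $aW_2(\mu^0,\gamma)$ dominates $\nabla\varphi(\cdot)\cdot W_2(\mu^0,\bm\pi)$, while the paper applies Cauchy--Schwarz and the trivial sign bound directly, but both yield the same inequality. One small imprecision: for the Fatou argument on the $g^\ddagger_\cE$ term you only need a uniform \emph{lower} bound on $\cE(\mu^n_{t_ns})$ (supplied by the metric bound and \eqref{eq:quadratic-lower-bound}), so the parenthetical appeal to monotonicity for an upper bound is superfluous; you are also implicitly using, as the paper itself does, that Proposition~\ref{proposition:existence_regularity_minimizer_supersol}\ref{item:supersol_optimizers_convergence_time} remains valid for any vanishing $t_n$, not just $t_n\le T_n$, which the proof of that item does support even though its statement is phrased more restrictively.
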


\begin{proof}
As in the proof of Proposition \ref{prop:subsolution}, we first note that $\Phi_*$ is bounded (since $|\Phi| \leq \|h\|_\infty$) and $W_2$-lower semicontinuous as well. Thus, according to Definition \ref{definition:viscosity_solutions_HJ_sequences}, we are left to prove that for any $f^\ddagger$, $g^\ddagger$ as in Definition \ref{definition:H} there exists a sequence $(\pi_n)_{n \in \bN}$ satisfying \eqref{eqn:viscsup1} and \eqref{eqn:viscsup2}.

We claim that, given $(\mu^n)$ and $\mu^0$ as in Proposition \ref{proposition:existence_regularity_minimizer_supersol}, the constant sequence $\pi_n = \mu^0$ is the desired one. Since
\[
\Phi_*(\mu^0) - f^\ddagger(\mu^0) = \inf\{\Phi_*-f^\ddagger\},
\]
it suffices to show that
\begin{equation}\label{eq:proof_supersol_7}
\Phi_*(\mu^0)-g^{\ddagger}(\mu^0)-h(\mu^0) \geq 0.
\end{equation} 
To do so, we will first derive a preliminary bound valid for any $\psi\in \cC^{\infty}_c(\R^d)$:
\begin{multline}\label{eq:corduroy_init}
\Phi_*(\mu^0) - g^\ddagger_\cE(\mu^0) - h(\mu^0)  \\
- \left( \ip{ \nabla\psi}{a\botmap{\mu^0}{\gamma} + \nabla \varphi\left(\frac12 W^2_2(\mu^0,\bm\pi) \right) \cdot \botmap{\mu^0}{\bm\pi}}_{L^2(\mu^0)} - \frac12\|\nabla\psi\|^2_{L^2(\mu^0)} \right) \geq 0.
\end{multline}
By choosing an appropriate sequence of $\psi_m$, together with an appropriate use of Cauchy-Schwarz, this will yield \eqref{eq:proof_supersol_7}.

\medskip

\noindent\underline{Proof of \eqref{eq:corduroy_init}.} We fix $\psi\in \cC^{\infty}_c(\R^d)$ and follow a similar strategy as in the proof of Proposition \ref{prop:subsolution}, but now for admissible curves $(\mu^n_\cdot,u_\cdot) \in \adm_\infty$, constructed using the suboptimal control $u_t(x) := \nabla\psi(x)$ and initial points $\mu^n$. The existence of such curves has already been discussed in the proof of Proposition \ref{proposition:existence_regularity_minimizer_supersol} \ref{item:supersol_optimizers_convergence_time}. 

From Proposition \ref{proposition:existence_regularity_minimizer_supersol} \ref{item:supersol_optimizers_convergence_inf} we directly obtain that
\begin{equation} \label{eqn:proof_supersol_almostoptimizers}
\Phi(\mu^n)-f^\ddagger(\mu^n) \leq \inf\{\Phi-f^\ddagger\} + O(T_n).
\end{equation}
Fix now a sequence $(\tau_n)_{n \geq 1}$ such that $\lim_{n}\tau_n=0$, $\lim_nT_n/\tau_n=0$. Then by \eqref{eqn:proof_supersol_almostoptimizers} we have that
\begin{equation}\label{eq:reg_super_5}
\Phi(\mu^n)-\Phi(\mu^n_{\tau_n})+f^{\ddagger}(\mu^n_{\tau_n})-f^{\ddagger}(\mu^n) \leq O(T_n).
\end{equation}
By invoking the dynamic programming principle \eqref{eq:DPP}, we obtain
\[
\Phi(\mu^n) \geq \cA_{\tau_n}(\mu^n_\cdot,\nabla\psi)+ e^{-\tau_n}\Phi(\mu^n_{\tau_n}),
\]
while the controlled EVI inequality (Proposition \ref{prop:modif_EVI}) gives
\[
f^\ddagger(\mu^n_{\tau_n})-f^\ddagger(\mu^n) \geq \int_0^{\tau_n} g^\ddagger_{\cE}(\mu^n_t) + \langle \nabla\psi, a \botmap{\mu^n_t}{\gamma} + \nabla \varphi\bigg(\frac{1}{2} W^2_2(\mu^n_t,\bm\pi)\bigg)\cdot\botmap{\mu^n_t}{\bm\pi}\rangle_{L^2(\mu^n_t)}\dd t.
\]
Plugging the last two bounds in \eqref{eq:reg_super_5} and dividing by $\tau_n$ we thus arrive at
\begin{equation}\label{eq:multiple-terms-lhs}
\begin{split}
\frac{1}{\tau_n}\cA_{\tau_n}(\mu^n_\cdot,\nabla\psi) & + \frac{e^{-\tau_n}-1}{\tau_n}\Phi(\mu^n_{\tau_n}) + \frac{1}{\tau_n}\int_0^{\tau_n} g^{\ddagger}_{\cE}(\mu^n_t) \,\dd t \\
& + \frac{1}{\tau_n}\int_0^{\tau_n} \langle \nabla\psi, a \botmap{\mu^n_t}{\gamma} + \nabla \varphi\left(\frac{1}{2}W^2_2(\mu^n_t,\bm\pi)\right)\cdot\botmap{\mu^n_t}{\bm\pi}\rangle_{L^2(\mu^n_t)}\dd t \leq O(T_n).
\end{split}
\end{equation}
We now examine each term on the left-hand side separately. 

\smallskip

\noindent\textbf{First term.} By a change of variable we note that
\[
\cA_{\tau_n}(\mu^n_\cdot,\nabla\psi) = \int_0^1 e^{-\tau_n s}\left(-\frac12 \|\nabla\psi\|^2_{L^2(\mu^n_{\tau_n s})} + h(\mu^n_{\tau_n s})\right)\dd s
\]
and remark that the modulus of the integrand function is bounded by $\frac12 \|\nabla\psi\|^2_\infty + \|h\|_\infty$. Since $h$ is $W_p$-continuous, by Proposition \ref{proposition:existence_regularity_minimizer_supersol} \ref{item:supersol_optimizers_convergence_time} and the dominated convergence theorem we thus obtain
\begin{equation}\label{eq:proof_supersol_1}
\lim_{n \to \infty} \frac{1}{\tau_n} \cA_{\tau_n}(\mu^n_\cdot,\nabla\psi) = h(\mu^0) - \frac12 \|\nabla\psi\|^2_{L^2(\mu^0)}.
\end{equation}

\smallskip

\noindent\textbf{Second term.} From the dynamic programming principle \eqref{eq:DPP} and the fact that $|\nabla\psi|$ and $h$ are bounded we have that
\[
e^{-\tau_n}\Phi(\mu^n_{\tau_n}) \leq \Phi(\mu^n) + (1-e^{-\tau_n})\Big(\frac12 \|\nabla\psi\|^2_\infty + \|h\|_\infty\Big) = \Phi(\mu^n) + O(\tau_n)
\]
and this inequality together with Proposition \ref{proposition:existence_regularity_minimizer_supersol} \ref{item:supersol_optimizers_convergence_valuefunction} yields
\begin{equation}\label{eq:proof_supersol_2}
\liminf_{n \to \infty} \frac{e^{-\tau_n}-1}{\tau_n} \Phi(\mu^n_{\tau_n}) \geq \liminf_{n \to \infty} -\Phi(\mu^n) = -\Phi_*(\mu^0).
\end{equation}

\smallskip

\noindent\textbf{Third term.} We argue as in the end of Proposition \ref{prop:subsolution}. On the one hand, by the $W_p$-lower semicontinuity of $\cE$ (see Remark \ref{rmk:lsc}) together with $\partial_i\varphi > 0$, the fact that $W_p(\mu^n_{\tau_n s}, \mu^0)$ as $n \to \infty$, for every $s \in [0,1]$, and Proposition \ref{proposition:existence_regularity_minimizer_supersol} \ref{item:supersol_optimizers_convergence_metric} we derive
\[
\liminf_{n \to \infty} g^\ddagger_\cE(\mu^n_{\tau_n s}) \geq g^\ddagger_\cE(\mu^0), \qquad \forall s \in [0,1].
\]
On the other hand, \eqref{eq:uniform-control-distance} and Proposition \ref{proposition:existence_regularity_minimizer_supersol} \ref{item:supersol_optimizers_convergence_metric} trivially imply that $W_2^2(\mu^n_{\tau_n s},\gamma)$ and $W_2^2(\mu^n_{\tau_n s},\pi_i)$ are bounded uniformly in $n \in \bN$, $s \in [0,1]$, and $i=1,\dots,k$. As a consequence, $\cE(\mu^n_{\tau_n s})$ is bounded from below uniformly in $n \in \bN$ and $s \in [0,1]$, whence
\[
\inf_{n \in \bN,s \in [0,1]} g^\ddagger_\cE(\mu^n_{\tau_n s}) > -\infty.
\]
A change of variable and Fatou's lemma then give
\begin{equation}\label{eq:proof_supersol_3}
\liminf_{n \to \infty}\frac{1}{\tau_n} \int_0^{\tau_n} g^\ddagger_{\cE}(\mu^n_t) \,\dd t = \liminf_{n \to \infty} \int_0^1 g^\ddagger_\cE(\mu^n_{\tau_n s})\,\dd s \geq g^\ddagger_{\cE}(\mu^0).
\end{equation}

\smallskip

\noindent\textbf{Fourth term.} Recall that for any $\eta \in \{\gamma, \pi_1,\dots,\pi_k\}$, $\otmap{\eta}{\mu^n_t}$ is the optimal transport map pushing $\eta$ onto $\mu^n_t$ and
\begin{equation*}
    \botmap{\mu^n_t}{\eta} \circ \otmap{\eta}{\mu^n_t} =  \left( \otmap{\mu^n_t}{\eta} -\bm{id}\right) \circ \otmap{\eta}{\mu^n_t} = -\botmap{\eta}{\mu^n_t}.
\end{equation*}
After a change of variable, the last term in \eqref{eq:multiple-terms-lhs} can thus be equivalently rewritten as
\[
\begin{split}
\frac{1}{\tau_n}\int_0^{\tau_n} & \langle \nabla\psi, a \botmap{\mu^n_t}{\gamma} + \nabla \varphi\left(\frac{1}{2}W^2_2(\mu^n_t,\bm\pi)\right)\cdot\botmap{\mu^n_t}{\bm\pi}\rangle_{L^2(\mu^n_t)}\dd t \\
& = -\int_0^1 \ip{\nabla\psi \circ \otmap{\gamma}{\mu^n_{\tau_n s}}}{a\botmap{\gamma}{\mu^n_{\tau_n s}}}_{L^2(\gamma)} \dd s \\
& \qquad - \sum_{i=1}^k \int_0^1 \ip{\nabla\psi \circ \otmap{\pi_i}{\mu^n_{\tau_n s}}}{\partial_i \varphi\left(\frac12 W^2_2(\mu^n_{\tau_n s},\bm\pi) \right) \botmap{\pi_i}{\mu^n_{\tau_n s}}}_{L^2(\pi_i)} \dd s.
\end{split}
\]
To obtain convergence as $n \to \infty$ to the desired term (i.e.\ the second line in \eqref{eq:corduroy_init}) we apply the dominated convergence theorem. We first claim that
\begin{equation}\label{eq:pointwise-conv-otmap}
\langle \nabla\psi \circ \otmap{\eta}{\mu^n_{\tau_n s}},\botmap{\eta}{\mu^n_{\tau_n s}}\rangle_{L^2(\eta)} \to \langle \nabla\psi \circ \otmap{\eta}{\mu^0},\botmap{\eta}{\mu^0}\rangle_{L^2(\eta)}
\end{equation}
for all $s \in [0,1]$ and $\eta \in \{\gamma,\pi_1,\dots,\pi_k\}$. Indeed, we note that
\[
\begin{split}
|\langle \nabla\psi \,\circ\, & \otmap{\eta}{\mu^n_{\tau_n s}}, \botmap{\eta}{\mu^n_{\tau_n s}}\rangle_{L^2(\eta)} - \langle \nabla\psi \circ \otmap{\eta}{\mu^0},\botmap{\eta}{\mu^0}\rangle_{L^2(\eta)}| \\
& \leq |\langle \nabla\psi \circ \otmap{\eta}{\mu^n_{\tau_n s}},\botmap{\eta}{\mu^n_{\tau_n s}}\rangle_{L^2(\eta)} - \langle \nabla\psi \circ \otmap{\eta}{\mu^n_{\tau_n s}},\botmap{\eta}{\mu^0}\rangle_{L^2(\eta)}| \\
& \quad + |\langle \nabla\psi \circ \otmap{\eta}{\mu^n_{\tau_n s}},\botmap{\eta}{\mu^0}\rangle_{L^2(\eta)} - \langle \nabla\psi \circ \otmap{\eta}{\mu^0},\botmap{\eta}{\mu^0}\rangle_{L^2(\eta)}| \\
& \leq \|\nabla\psi\|_\infty \|\otmap{\eta}{\mu^n_{\tau_n s}} - \otmap{\eta}{\mu^0}\|^2_{L^2(\eta)} + \|\nabla\psi \circ \otmap{\eta}{\mu^n_{\tau_n s}} - \nabla\psi \circ \otmap{\eta}{\mu^0}\|_{L^2(\eta)}\|\botmap{\eta}{\mu^n_0}\|_{L^2(\eta)}.
\end{split}
\]
Then, by Proposition \ref{proposition:existence_regularity_minimizer_supersol} \ref{item:supersol_optimizers_convergence_time} and Lemma \ref{lem:continuity_map} the first term on the last line vanishes as $n \to \infty$ and, up to extracting a subsequence, $\otmap{\eta}{\mu^n_{\tau_n s}} \to \otmap{\eta}{\mu^0}$ $\eta$-a.e. Since $\nabla\psi$ is continuous and bounded, by the dominated convergence theorem also the second term goes to 0, whence \eqref{eq:pointwise-conv-otmap}.

Moreover, by Proposition \ref{proposition:existence_regularity_minimizer_supersol} \ref{item:supersol_optimizers_convergence_time} $W_2(\mu^n_{\tau_n s},\eta)$ is bounded uniformly in $n \in \bN$ and $s \in [0,1]$, so that if we set
\[
R := \max_{\eta \in \{\gamma,\pi_1,\dots,\pi_k\}}\sup_{s \in [0,1]}\sup_{n \in \bN} W_2(\mu^n_{\tau_n s},\eta) < +\infty,
\]
we deduce that
\[
|\langle \nabla\psi \circ \otmap{\gamma}{\mu^n_{\tau_n s}},a\botmap{\gamma}{\mu^n_{\tau_n s}}\rangle_{L^2(\gamma)}| \leq a\|\nabla\psi\|_\infty W_2(\mu^n_{\tau_n s},\gamma) \leq a\|\nabla\psi\|_\infty R
\]
and
\[
\begin{split}
\left|\ip{\nabla\psi \circ \otmap{\pi_i}{\mu^n_{\tau_n s}}}{\partial_i \varphi\left(\frac12 W^2_2(\mu^n_{\tau_n s},\bm\pi) \right) \botmap{\pi_i}{\mu^n_{\tau_n s}}}_{L^2(\pi_i)}\right| & \leq \partial_i\varphi(R^2/2)\|\nabla\psi\|_\infty  W_2(\mu^n_{\tau_n s},\pi_i) \\
& \leq \partial_i\varphi(R^2/2)\|\nabla\psi\|_\infty R.
\end{split}
\]
We can thus apply the dominated convergence theorem and get
\begin{equation}\label{eq:proof_supersol_4}
\begin{split}
\lim_{n \to \infty} \frac{1}{\tau_n} & \int_0^{\tau_n} \langle \nabla\psi, a \botmap{\mu^n_t}{\gamma} + \nabla \varphi\left(\frac{1}{2}W^2_2(\mu^n_t,\bm\pi)\right)\cdot\botmap{\mu^n_t}{\bm\pi}\rangle_{L^2(\mu^n_t)}\dd t \\
& = -a \langle \nabla\psi \circ \otmap{\gamma}{\mu^0},\botmap{\gamma}{\mu^0}\rangle_{L^2(\gamma)} - \sum_{i=1}^k \partial_i\varphi\left(\frac12 W_2^2(\mu^0,\bm\pi)\right) \langle \nabla\psi \circ \otmap{\pi_i}{\mu^0},\botmap{\pi_i}{\mu^0}\rangle_{L^2(\pi_i)} \\
& = \langle \nabla\psi,a \botmap{\mu^0}{\gamma} + \nabla\varphi\left(\frac12 W_2^2(\mu^0,\bm\pi)\right) \cdot \botmap{\mu^0}{\bm\pi}\rangle_{L^2(\mu^0)},
\end{split}
\end{equation}
where in the last identity we used again the definition of transport map to come back to an integral w.r.t.\ $\mu^0$. To justify this change of variable, however, we first need to show that $\botmap{\mu^0}{\gamma}$ and $\botmap{\mu^0}{\bm\pi}$ actually exist. But this is ensured by Proposition \ref{proposition:existence_regularity_minimizer_supersol} \ref{item:supersol_optimizers_regularization} (which implies $\mu^0 \ll \cL^d$) and \cite[Theorem 6.2.4]{AmGiSa08}.

If we now gather \eqref{eq:proof_supersol_1}, \eqref{eq:proof_supersol_2}, \eqref{eq:proof_supersol_3}, and \eqref{eq:proof_supersol_4} we obtain
\begin{equation}\label{eq:corduroy}
\begin{split}
h(\mu^0) - \frac12\|\nabla\psi\|^2_{L^2(\mu^0)} & - \Phi_*(\mu^0) + g^\ddagger_\cE(\mu^0) +  \\
& + \ip{ \nabla\psi}{a\botmap{\mu^0}{\gamma} + \nabla \varphi\left(\frac12 W^2_2(\mu^0,\bm\pi) \right) \cdot \botmap{\mu^0}{\bm\pi}}_{L^2(\mu^0)} \leq 0,
\end{split}
\end{equation}
which, up to rearrangement of terms, is \eqref{eq:corduroy_init}. 

\medskip

\noindent\underline{Proof of \eqref{eq:proof_supersol_7}.} Since $\mu^0 \in \cD(\cE)$, we can invoke \cite[Proposition 8.5.2]{AmGiSa08}, which ensures that
\[
\botmap{\mu^0}{\gamma},\,\botmap{\mu^0}{\pi_i} \in \overline{\{\nabla\psi \,:\, \psi \in \cC^\infty_c(\R^d)\}}^{L^2(\mu^0)}, \qquad i=1,...,k.
\]
We can thus choose $\psi_m \in \mathcal{C}_c^\infty(\R^d)$ such that 
\[
\nabla\psi_m \to a\botmap{\mu^0}{\gamma} + \nabla \varphi\left(\frac12 W^2_2(\mu^0,\bm\pi) \right) \cdot \botmap{\mu^0}{\bm\pi} \qquad \textrm{in } L^2(\mu^0)
\]
and note once again that \eqref{eq:corduroy} holds true for all $\psi \in \mathcal{C}_c^\infty(\R^d)$. Therefore, if we consider \eqref{eq:corduroy} with $\psi=\psi_m$ and take the limit as $m \to \infty$, we obtain
\begin{equation}\label{eq:proof_supersol_5}
h(\mu^0) - \Phi_*(\mu^0) + g^\ddagger_\cE(\mu^0) + \frac12 \left\|a\botmap{\mu^0}{\gamma} + \nabla \varphi\left(\frac12 W^2_2(\mu^0,\bm\pi) \right) \cdot \botmap{\mu^0}{\bm\pi}\right\|^2_{L^2(\mu_0)} \leq 0.
\end{equation}
To finish the proof, Cauchy-Schwarz inequality and the fact that $\|\botmap{\mu^0}{\eta}\|_{L^2(\mu^0)} = W_2(\mu^0,\eta)$ for $\eta \in \{\gamma,\pi_1,\dots,\pi_k\}$ provide us with
\[
\begin{split}
\frac12 \bigg\|a\botmap{\mu^0}{\gamma} & + \nabla \varphi\left(\frac12 W^2_2(\mu^0,\bm\pi) \right) \cdot \botmap{\mu^0}{\bm\pi}\bigg\|_{L^2(\mu^0)}^2 \\
& = \frac{a^2}{2}W^2_2(\mu_0,\gamma) + a \langle \botmap{\mu^0}{\gamma}, \nabla \varphi\left(\frac12 W^2_2(\mu^0,\bm\pi) \right) \cdot \botmap{\mu^0}{\bm\pi} \rangle_{L^2(\mu^0)}\\
& \quad + \frac12\left|\nabla \varphi\left(\frac12 W^2_2(\mu^0,\bm\pi) \right) \cdot \botmap{\mu^0}{\bm\pi}\right|^2_{L^2(\mu_0)}\\
& \geq \frac{a^2}{2} W^2_2(\mu_0,\gamma) -a W_2(\mu^0,\gamma)\left( \nabla \varphi\left(\frac12 W^2_2(\mu^0,\bm\pi) \right) \cdot W_2(\mu^0,\bm\pi)\right)\\
& \quad -\frac{1}{2}\left( \nabla\varphi\Big(\frac{1}{2}W_2^2(\mu^0,\bm\pi)\Big)\cdot W_2(\mu^0,\bm\pi)\right)^2\\
& = g^{\ddagger}_{W_2}(\mu^0).
\end{split}
\]
Plugging this bound into \eqref{eq:proof_supersol_5} gives the desired conclusion \eqref{eq:proof_supersol_7}.
\end{proof}



\appendix

\section{Appendix} \label{section:appendix}

\begin{lemma}\label{lemma:upgrade_semicontinuity_composition}
Let
\[
f^\dagger(\pi) = \frac{a}{2} W_2^2(\pi,\rho) + \varphi\left(\frac{1}{2} W_2^2(\pi,\bm\mu) \right)
\]
with $a > 0$, $\varphi \in \cT$ as in \eqref{eqn:defT} and $\rho,\mu_1,\dots,\mu_k \in \cP_2(\bR^d)$. Suppose that $\pi^n \rightarrow \pi^0$ weakly and $f^\dagger(\pi^n) \rightarrow f^\dagger(\pi^0)$. Then we have
\[
W_2(\pi^n,\rho) \to W_2(\pi^0,\rho), \qquad \textrm{and} \qquad W_2(\pi^n,\mu_i) \to W_2(\pi^0,\mu_i)
\]
for all $i=1,\ldots,k$.
\end{lemma}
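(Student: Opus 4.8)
The plan is to combine the weak lower semicontinuity of $\mu\mapsto W_2(\mu,\nu)$ with an elementary monotonicity argument exploiting that, since $\varphi\in\cT$, one has $\partial_i\varphi>0$ for every $i$, so that $\varphi$ is strictly increasing in each of its arguments. The only preparatory ingredient needed is an a priori bound on the distances $W_2(\pi^n,\rho)$ and $W_2(\pi^n,\mu_i)$.

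\emph{Step 1 (a priori boundedness).} Writing $\bm 0=(0,\dots,0)$, the strict monotonicity of $\varphi$ gives $\varphi(\bm x)\ge\varphi(\bm 0)$ for every $\bm x\in[0,\infty)^k$, whence
\[
\frac a2 W_2^2(\pi^n,\rho) = f^\dagger(\pi^n) - \varphi\Big(\frac12 W_2^2(\pi^n,\bm\mu)\Big) \le f^\dagger(\pi^n) - \varphi(\bm 0).
\]
Since $f^\dagger(\pi^n)$ converges it is bounded, so $C:=\sup_n W_2(\pi^n,\rho)<+\infty$, and by the triangle inequality $\sup_n W_2(\pi^n,\mu_i)\le C + W_2(\rho,\mu_i)<+\infty$ for all $i=1,\dots,k$.

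\emph{Step 2 (contradiction argument).} Recall that $W_2(\cdot,\nu)$ is lower semicontinuous with respect to weak convergence (cf.\ Lemma~\ref{lemma:Wp-compactness} and its proof), hence $W_2(\pi^0,\rho)\le\liminf_n W_2(\pi^n,\rho)$ and $W_2(\pi^0,\mu_i)\le\liminf_n W_2(\pi^n,\mu_i)$. Assume one of the claimed convergences fails; say $W_2(\pi^n,\rho)\not\to W_2(\pi^0,\rho)$ (the $\mu_i$ cases being verbatim the same). Together with the lower bound this forces $\limsup_n W_2(\pi^n,\rho)>W_2(\pi^0,\rho)$, so along a subsequence $W_2(\pi^{n_k},\rho)\to\ell_0>W_2(\pi^0,\rho)$. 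By Step 1 the remaining distances stay bounded along $(n_k)$, so after a further extraction $W_2(\pi^{n_k},\mu_i)\to\ell_i\in[W_2(\pi^0,\mu_i),+\infty)$ for all $i$, the lower bound again coming from lower semicontinuity. Passing to the limit along $(n_k)$ and using continuity of $r\mapsto\frac a2 r^2$ and of $\varphi$ on $[0,\infty)^k$ yields
\[
f^\dagger(\pi^0) = \lim_{k\to\infty} f^\dagger(\pi^{n_k}) = \frac a2 \ell_0^2 + \varphi\Big(\frac12\ell_1^2,\dots,\frac12\ell_k^2\Big).
\]
But $a>0$ and $\varphi$ is strictly increasing in each argument, which together with $\ell_0>W_2(\pi^0,\rho)$ and $\ell_i\ge W_2(\pi^0,\mu_i)$ imply that the right-hand side is strictly greater than $\frac a2 W_2^2(\pi^0,\rho)+\varphi(\frac12 W_2^2(\pi^0,\bm\mu))=f^\dagger(\pi^0)$, a contradiction. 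If it is instead some $W_2(\pi^n,\mu_j)$ that fails to converge, the same reasoning applies with the strict inequality $\ell_j>W_2(\pi^0,\mu_j)$.

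The argument is short and I foresee no genuine obstacle; the one place demanding attention is Step 1, whose role is to ensure that the subsequential limits $\ell_i$ are finite — so that continuity of $\varphi$ on $[0,\infty)^k$ may be invoked — and which relies crucially on $\partial_i\varphi>0$ (giving $\varphi\ge\varphi(\bm 0)$) and on the triangle inequality for $W_2$.
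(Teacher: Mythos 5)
Your proof is correct and follows the same route as the paper: lower semicontinuity of $W_2$ under weak convergence, plus strict coordinate-wise monotonicity of $\varphi$ (from $\partial_i\varphi>0$), fed into a contradiction argument. The one small difference is that you isolate an explicit a priori boundedness step (Step 1) to justify extracting convergent subsequences for all the distances simultaneously; the paper leaves this implicit when it passes to a subsequence along which the distances converge. That makes your write-up slightly more self-contained, but the underlying argument is the same.
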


\begin{proof}
By lower semicontinuity w.r.t.\ weak convergence we know that 
\begin{equation}\label{eq:lsc_metric}
W_2(\pi^0,\rho) \leq \liminf_{n \to \infty} W_2(\pi^n,\rho) \qquad \textrm{and} \qquad W_2(\pi^0,\mu_i) \leq \liminf_{n \to \infty} W_2(\pi^n,\mu_i)
\end{equation}
for all $i=1,\ldots,k$ and, together with the fact that $\varphi$ is continuous and increasing in each coordinate, this implies that we can find a subsequence $(n_m)_{m \in \bN}$ along which all the above liminf's are in fact limits and
\begin{equation}\label{eq:lsc_varphi}
\varphi\bigg(\frac12 W^2_2(\pi^0,\bm\mu)\bigg) \leq \varphi\bigg(\frac12 \liminf_{m \to \infty} W^2_2(\pi^{n_m},\bm\mu)\bigg) = \lim_{m \to \infty} \varphi\bigg(\frac12 W^2_2(\pi^{n_m},\bm\mu)\bigg),
\end{equation}
where
\[
\liminf_{m \to \infty} W^2_2(\pi^{n_m},\bm\mu) = \Big(\liminf_{m \to \infty} W_2^2(\pi^{n_m},\mu_1), \ldots, \liminf_{m \to \infty} W_2^2(\pi^{n_m},\mu_k)\Big).
\]
If we assume by contradiction that $W_2(\pi^0,\mu_i) < \lim_m W_2(\pi^{n_m},\mu_i)$ for some $i$, then by the fact that $\partial_i\varphi > 0$, i.e.\ $\varphi$ is coordinatewise strictly increasing, the inequality in \eqref{eq:lsc_varphi} must be strict. Recalling that $W_2(\pi^0,\rho) \leq \lim_m W_2(\pi^{n_m},\rho)$ by \eqref{eq:lsc_metric}, it is now sufficient to multiply it by $a/2$ and sum it with \eqref{eq:lsc_varphi}, thus obtaining
\[
\begin{split}
\frac{a}{2} W_2^2(\pi^0,\rho) + \varphi\bigg(\frac12 W^2_2(\pi^0,\bm\mu)\bigg) < \lim_{m \to \infty} \frac{a}{2} W_2^2(\pi^{n_m},\rho) + \varphi\bigg(\frac12 W^2_2(\pi^{n_m},\bm\mu)\bigg),
\end{split}
\]
namely
\[
f^\dagger(\pi^0) < \lim_{m \to \infty}f^\dagger(\pi^{n_m})
\]
and this is a contradiction to $f^\dagger(\pi^n) \to f^\dagger(\pi^0)$. Hence we can conclude that $\limsup_n W_2(\pi^n,\mu_i) = \lim_m W_2(\pi^{n_m},\mu_i) \leq W_2(\pi^0,\mu_i)$ and thus $W_2(\pi^n,\mu_i) \to W_2(\pi^0,\mu_i)$ for all $i=1,\ldots,k$. The proof of $W_2(\pi^n,\rho) \to W_2(\pi^0,\rho)$ follows an analogous (but simpler) argument.
\end{proof}

}

\printbibliography

\end{document}